\numberwithin{equation}{section}
\newtheorem{lemma}{Lemma}[section]
\newtheorem{theorem}[lemma]{Theorem}
\newtheorem{proposition}[lemma]{Proposition}
\newtheorem{definition}[lemma]{Definition}
\newtheorem{corollary}[lemma]{Corollary}
\newtheorem{hypothesis}[lemma]{Hypothesis}
\theoremstyle{definition}
\newtheorem{remark}[lemma]{Remark}
\newcommand{\Gal}{\mathrm{Gal}}
\newcommand{\Hol}{\mathrm{Hol}}
\newcommand{\Hom}{\mathrm{Hom}}
\newcommand{\Aut}{\mathrm{Aut}}
\newcommand{\Perm}{\mathrm{Perm}}
\newcommand{\ord}{\mathrm{ord}}
\newcommand{\lcm}{\mathrm{lcm}}
\newcommand{\id}{\mathrm{id}}
\newcommand{\Z}{\mathbb{Z}}
\newcommand{\K}{\mathcal{K}}
\newcommand{\F}{\mathcal{F}}
\newcommand{\NN}{\mathcal{N}}
\newcommand{\onto}{\twoheadrightarrow}
\newcommand{\LRA}{\Leftrightarrow}
\begin{document}
\title[Hopf-Galois Structures of Squarefree Degree]
{Hopf-Galois Structures of Squarefree Degree}

\author{Ali A.~Alabdali}
 \thanks{The first-named author acknowledges support from The
    Higher Committee for Education Development in Iraq (PhD
    studentship D-11-1775).}


\address{(A.~Alabdali) Department of Mathematics, College of Education
  for Pure Science, University of Mosul, Mosul, Iraq.}
\email{aaab201@exeter.ac.uk}

\author{Nigel P.~Byott}
\address{(N.~Byott) Department of Mathematics, College of Engineering,
  Mathematics and Physical Sciences, University of Exeter, Exeter 
EX4 4QF U.K.}  
\email{N.P.Byott@exeter.ac.uk}

\date{\today}
\subjclass[2010]{12F10, 16T05} 
\keywords{Hopf-Galois structures; field extensions; groups of squarefree
order}

\bibliographystyle{amsalpha}

\begin{abstract} 
Let $n$ be a squarefree natural number, and let $G$, $\Gamma$ be two
groups of order $n$. We determine the number of Hopf-Galois structures
of type $G$ admitted by a Galois extension of fields with Galois group
isomorphic to $\Gamma$. We give some examples, including a full
treatment of the case where $n$ is the product of three primes. 
\end{abstract}

\maketitle

\section{Introduction}

The notion of Hopf-Galois extension was introduced by Chase and
Sweedler \cite{CS}. (We recall the definition in \S\ref{HGS} below.)
This was in part motivated by the study of inseparable extensions of
fields, but is also of considerable interest for separable
extensions. Indeed, Greither and Pareigis \cite{GP} showed that a
given separable extension of fields $L/K$ may admit many Hopf-Galois
structures, and that finding them can be reduced to a group-theoretic
problem. If $H$ is a $K$-Hopf algebra giving a Hopf-Galois structure
on $L/K$, then, extending scalars to the algebraic closure $K^c$ of
$K$, we have an isomorphism of $K^c$-Hopf algebras $K^c \otimes_K H
\cong K^c[G]$, where $K^c[G]$ is the group ring of a group $G$ whose
order coincides with the degree of $L/K$. The isomorphism type of $G$
is called the {\em type} of the Hopf-Galois structure. Given abstract
groups $\Gamma$, $G$ of the same finite order $n$, we would like to
determine the number $e(\Gamma,G)$ of Hopf-Galois structures of type
$G$ on a Galois extension $L/K$ with Galois group $\Gal(L/K) \cong \Gamma$.

Since the work of Greither and Pareigis, there have been a number of
papers enumerating Hopf-Galois structures on Galois extensions of
various kinds: see for example \cite{pq, CC, Ch03, CRV-symalt, kohl,
  kohl16, NZ}. In \cite{AB}, we considered cyclic extensions of
squarefree degree $n$, obtaining a simple formula for the number of
Hopf-Galois structures whose type is any given group of order $n$. The
purpose of this paper is to extend the work in \cite{AB} by allowing
an arbitrary Galois group. Thus we determine
$e(\Gamma,G)$ for any two groups $\Gamma$, $G$ of squarefree order
$n$. The answer, which we state in Theorem \ref{thm-HGS} below once we
have developed the necessary notation, depends on an interplay of the
structures of the two groups, and turns out to be considerably more
complicated than when $\Gamma$ is cyclic.

Most of this paper will be taken up with the proof of Theorem
\ref{thm-HGS}, but in \S\ref{example} we present a fairly elaborate
example to indicate the complexities that can occur. Then, in \S\ref{special}, we consider
various special cases of Theorem \ref{thm-HGS}, some
of which recover results already in the literature. In particular, we
give in Theorem \ref{3-prime-thm} a full treatment of the case where
$n$ is the product of three primes. Some partial results for this case
were given in \cite{Ch03, kohl13, kohl16}.

The problem of enumerating Hopf-Galois structures on Galois extensions
of fields is closely related (but not equivalent) to the problem of
enumerating certain algebraic structures called skew braces, which
were introduced by Guarnieri and Vendramin \cite{GV} to study
set-theoretical solutions of the quantum Yang-Baxter equation. Skew braces
generalise the braces defined by Rump in \cite{Rump}. The connection
between braces and Hopf-Galois structures was first observed by
Bachiller \cite{Bachiller}, and the relationship between the two
enumeration problems was clarified in the appendix to \cite{SV}. In a
separate paper \cite{AB-braces}, we apply the results obtained here to
enumerate skew braces of squarefree order. 

Much of the content of this paper, along with that of \cite{AB},
appears in the PhD thesis \cite{thesis} of the first-named author. We
take the opportunity here to make some improvements in the exposition
and to correct some minor errors in \cite{thesis}.  

\section{Groups of squarefree order and main result} \label{statement}

A finite group in which all Sylow subgroups are cyclic is
necessarily metabelian, so such groups may be classified, see
e.g.~\cite{MM}. This applies in particular to groups of squarefree
order, and combining \cite[Lemmas 3.5, 3.6]{MM} in this case, we
obtain the following result (cf.~\cite[Lemma 3.2]{AB}).

\begin{lemma} \label{sf-class}
Let $n \geq 1$ be a squarefree integer. Then any group of order $n$
has the form 
$$   G(d,e,k)= \langle \sigma, \tau \colon \sigma^e=\tau^d=1_G, \tau
\sigma \tau^{-1} = \sigma^k \rangle $$ 
where $n=de$, $\gcd(d,e)=1$ and $\ord_e(k)=d$. Conversely, any choice
of $d$, $e$ and $k$ satisfying these conditions gives a group
$G(d,e,k)$ of order $n$. Moreover, two such groups $G(d,e,k)$ and
$G(d',e',k')$ are isomorphic if and only if $d=d'$, $e=e'$, and
$k$, $k'$  generate the same cyclic subgroup of $\Z_e^\times$.
\end{lemma}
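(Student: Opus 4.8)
The plan is to build the presentation from the classical structure theory of groups with cyclic Sylow subgroups, and then to isolate $\langle\sigma\rangle$ as an invariant of the abstract group. Since $n$ is squarefree, every Sylow subgroup of a group $G$ of order $n$ has prime order and so is cyclic; hence $G$ is a Z-group, and such groups are metacyclic. I would either quote this (it is essentially the content of \cite[Lemmas 3.5, 3.6]{MM}) or reprove it by induction on the number of prime divisors of $n$: taking $p$ to be the smallest prime dividing $n$ and $P$ a Sylow $p$-subgroup, the group $N_G(P)/C_G(P)$ embeds in $\Aut(P)\cong\Z/(p-1)\Z$, whose order is coprime to $n$ because every prime divisor of $p-1$ is less than $p$; thus $N_G(P)=C_G(P)$, Burnside's normal $p$-complement theorem gives $G=K\rtimes P$ with $|K|=n/p$, and the inductive hypothesis applies to $K$. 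Either way one produces a normal cyclic subgroup $\langle\sigma\rangle$ of order $e$ with cyclic quotient of order $d=n/e$; squarefreeness of $n$ forces $\gcd(d,e)=1$, so by Schur--Zassenhaus there is a complement $\langle\tau\rangle$ with $\tau^d=1_G$, and conjugation on $\langle\sigma\rangle$ gives $\tau\sigma\tau^{-1}=\sigma^k$ with $k^d\equiv1\pmod e$, that is, $\ord_e(k)\mid d$.

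One point must not be skipped: so far we have only $\ord_e(k)\mid d$, not equality. If $d_0:=\ord_e(k)$ is a proper divisor of $d$, then $\langle\tau^{d_0}\rangle$ centralises $\langle\sigma\rangle$ and so lies in $Z(G)$; since $d$ is squarefree, $\gcd(d_0,d/d_0)=1$, hence $\langle\tau\rangle=\langle\rho\rangle\times\langle\tau^{d_0}\rangle$ with $|\langle\rho\rangle|=d_0$, and $\langle\sigma\rangle\langle\tau^{d_0}\rangle$ is a normal cyclic subgroup of order $e(d/d_0)$ (here $\gcd(e,d)=1$ is used) admitting $\langle\rho\rangle$ as a complement on which $\rho$ now acts faithfully. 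Relabelling the data yields a triple with $\ord_e(k)=d$ and the same product $de=n$. For the converse assertion, given any $(d,e,k)$ with $n=de$, $\gcd(d,e)=1$, $\ord_e(k)=d$, the congruence $k^d\equiv1\pmod e$ makes $\Z/e\Z\rtimes\Z/d\Z$ (with the generator of $\Z/d\Z$ acting as multiplication by $k$) a well-defined group of order $n$, and a standard normal-form count shows the displayed presentation defines exactly this group.

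The crux --- and where I expect the real obstacle --- is the isomorphism criterion, which rests on showing that $\langle\sigma\rangle$ is determined by the abstract group $G$, so that $e$ and $d=n/e$ are isomorphism invariants. I would prove this by identifying $\langle\sigma\rangle$ with the Fitting subgroup $F(G)$: on one hand $\langle\sigma\rangle$ is normal and nilpotent, so $\langle\sigma\rangle\le F(G)$; on the other hand $F(G)$, being nilpotent of squarefree order, is cyclic and abelian, so for a prime $p\mid e$ the unique (normal) Sylow $p$-subgroup of $G$ lies in $\langle\sigma\rangle$, while for a prime $p\mid d$ the hypothesis $\ord_e(k)=d$ gives $C_G(\langle\sigma\rangle)=\langle\sigma\rangle$ (an element $\sigma^a\tau^b$ in normal form commutes with $\sigma$ only if $k^b\equiv1\pmod e$, forcing $b=0$), so a nontrivial $p$-part of the abelian group $F(G)$, which centralises $\langle\sigma\rangle\le F(G)$, would sit inside $\langle\sigma\rangle$, impossible since $\gcd(d,e)=1$; hence $F(G)=\langle\sigma\rangle$. (One can also argue directly that $\langle\sigma\rangle$ is the unique subgroup of $G$ of order $e$, again using $\gcd(d,e)=1$.) Granting this, any isomorphism $G(d,e,k)\to G(d',e',k')$ maps $\langle\sigma\rangle$ onto $\langle\sigma'\rangle$, so $e=e'$ and $d=d'$; writing such an isomorphism as $\sigma\mapsto(\sigma')^{t}$, $\tau\mapsto(\sigma')^{a}(\tau')^{b}$ with $\gcd(b,d)=1$ and applying it to $\tau\sigma\tau^{-1}=\sigma^k$ yields $(k')^{b}\equiv k\pmod e$, so $k$ generates $\langle k'\rangle$ inside $\Z_e^\times$ and $\langle k\rangle=\langle k'\rangle$. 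Conversely, if $d=d'$, $e=e'$ and $\langle k\rangle=\langle k'\rangle$, choose $b$ with $\gcd(b,d)=1$ and $k\equiv(k')^{b}\pmod e$; then $\sigma\mapsto\sigma'$, $\tau\mapsto(\tau')^{b}$ respects the defining relations, so by von Dyck's theorem it extends to a homomorphism, which is surjective because $\gcd(b,d)=1$ and hence, by comparing orders, an isomorphism. Everything else is routine calculation with the presentation.
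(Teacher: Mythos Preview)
Your argument is correct. Note, however, that the paper does not actually prove this lemma: it simply records that the result follows from \cite[Lemmas 3.5, 3.6]{MM} (cf.\ \cite[Lemma 3.2]{AB}), so there is no proof in the paper to compare against beyond that citation. Your first option---quoting \cite{MM}---is therefore exactly what the paper does.

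Your self-contained treatment goes considerably further than the paper and is sound. The reduction to $\ord_e(k)=d$ by absorbing the central factor $\langle\tau^{d_0}\rangle$ into $\langle\sigma\rangle$ is correct (squarefreeness is used twice: to split $\langle\tau\rangle$ as a direct product and to ensure the enlarged abelian normal subgroup is cyclic). The identification $\langle\sigma\rangle=F(G)$ via $C_G(\langle\sigma\rangle)=\langle\sigma\rangle$ is clean; one could streamline the end of that paragraph by observing directly that $F(G)$, being abelian and containing $\langle\sigma\rangle$, lies in $C_G(\langle\sigma\rangle)=\langle\sigma\rangle$, rather than arguing prime by prime. The isomorphism criterion then follows as you indicate. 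In short: your proposal supplies a complete proof where the paper offers only a reference, and the content is compatible with what is cited there.
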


Here, for any natural numbers $a$, $m$ with $\gcd(a.m)=1$, we write 
 $\ord_m(a)$ for the order of $a$ in the group $\Z_m^\times$ of units
in the ring $\Z_m$ of integers modulo $m$. 

We now fix a squarefree number $n$ and a group $G=G(d,e,k)$ of order
$n$, as in Lemma \ref{sf-class}. Let 
\begin{equation} \label{def-zg}
   z=\gcd(k-1, e),   \qquad g=e/z,  
\end{equation}
and for each prime $q \mid e$, let
 $  r_q = \ord_q(k)$. 
In general, $d$, $g$ and $z$ do not determine the $r_q$. Moreover,
$d$, $g$, $z$ and the $r_q$ do not determine the isomorphism type of
$G$. This is illustrated by the examples in \S\ref{example}.

Next let $\Gamma$ be another group of order $n$.  We will, as far as
possible, use corresponding Greek and Roman letters for quantities 
associated with $\Gamma$ and $G$, whilst keeping the notation
consistent with \cite{AB}. Thus we write
\begin{equation} \label{def-Gamma}
   \Gamma =    G(\delta,\epsilon,\kappa)= \langle s, t \colon
  s^\epsilon=t^\delta =1, t s t^{-1} = s^\kappa \rangle,  
\end{equation}
with $\delta \epsilon =n$ and
$\ord_\epsilon \kappa = \delta$, and we set 
\begin{equation} \label{def-zeta}
   \zeta=\gcd(\kappa-1, \epsilon),   \quad \gamma=\epsilon/\zeta,
   \quad \rho_q=\ord_q(\kappa) \mbox{ for primes } q \mid \epsilon.
\end{equation}

We consider the set 
$$ \K = \{ \kappa^r : r \in  \Z_\delta^\times\}. $$
By Lemma \ref{sf-class}, we may replace $\kappa$ in
(\ref{def-Gamma}) by any element of $\K$ without changing the
isomorphism type of $\Gamma$. The group $\Z_\delta^\times$ acts
regularly on $\K$ by exponentiation, so its subgroup
$$ \Delta:=\{m \in \Z_\delta^\times : m \equiv 1 \pmod{\gcd(\delta,d)}
\} $$ 
acts without fixed points on $\K$. The index of $\Delta$ in
$\Z_\delta^\times$ is
$$  w=\varphi(\gcd(\delta,d)),  $$
where $\varphi$ is the Euler totient function. Let $\kappa_1$, \ldots,
$\kappa_w$ be a system of orbit representatives of $\Delta$ on $\K$. 

We now define two sets of primes, depending on both $G$ and $\Gamma$:
$$ S =  \{\mbox{primes }q \mid \gcd(\gamma,g) : \rho_q=r_q >2 \}; $$ 
$$ T = \{\mbox{primes }q \mid \gcd(\gamma,g) : \rho_q=r_q=2 \}. $$

For $1 \leq h \leq w$, we also define 
 $$  S_h = \{ q \in S :  \kappa_h \equiv k \mbox{ or } \kappa_h \equiv k^{-1}
   \pmod{q} \}.  $$
We make some comments on the definitions of $\kappa_h$ and $S_h$ in Remarks
\ref{choose-kappa}, \ref{Sh-wd} \and \ref{choose-kappa-bis} below. 

For a natural number $m$, let $\omega(m)$ denote the number of
distinct prime factors of $m$. Thus, when $m$ is squarefree,
$2^{\omega(m)}$ is the number of positive integer factors of $m$.

We can now state our main result. 

\begin{theorem} \label{thm-HGS}
Let $G$ and $\Gamma$ be groups of squarefree order $n$, and
let $L/K$ be a Galois extension of fields with $\Gal(L/K) \cong
\Gamma$. With the above notation, 
the number $e(\Gamma,G)$ of Hopf-Galois structures of type $G$
on $L/K$ is given by
$$ e(\Gamma,G) = \begin{cases} 
 \displaystyle{\frac{2^{\omega(g)} \varphi(d) \gamma}{w} \left(
   \prod_{q \in T} \frac{1}{q} 
\right) \sum_{h=1}^w \prod_{q \in S_h}\frac{q+1}{2q}} & \mbox{ if }
 \gamma \mid e, \\ 
           0 & \mbox{ if } \gamma \nmid e. \end{cases} $$
\end{theorem}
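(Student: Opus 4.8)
The plan is to follow the standard Greither--Pareigis translation: the number $e(\Gamma,G)$ counts regular subgroups $N$ of $\Hol(\Gamma) = \Gamma \rtimes \Aut(\Gamma)$ with $N \cong G$, divided by $|\Aut(\Gamma)|$ after summing over the conjugacy action, or more precisely $e(\Gamma,G) = |\Aut(G)|^{-1} \cdot \#\{\text{regular embeddings } G \hookrightarrow \Hol(\Gamma)\}$ weighted appropriately. Since $G$ and $\Gamma$ are metabelian of squarefree order, everything is governed by the normal cyclic subgroups $\langle \sigma \rangle$, $\langle s \rangle$ of orders $e$, $\epsilon$ and the cyclic quotients of orders $d$, $\delta$. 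First I would recall from \cite{AB} the structural description of $\Hol(\Gamma)$ and of its regular subgroups: a regular subgroup $N$ decomposes prime-by-prime in a way compatible with the fixed-point data, because $n$ is squarefree and by the Schur--Zassenhaus / Sylow theory underlying Lemma \ref{sf-class}. The condition $\gamma \mid e$ will emerge as the necessary and sufficient condition for any regular $N \cong G$ to exist at all: the commutator structure of $N$ forces a cyclic subgroup on which a fixed quotient acts, and if $\gamma \nmid e$ there is simply no room for it inside $\Hol(\Gamma)$, giving $e(\Gamma,G)=0$.

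Assuming $\gamma \mid e$, the main computation is to count the regular subgroups. I would stratify by a prime $q \mid n$ and analyse three regimes: primes $q \nmid \gamma g$ (the ``inert/ramified-free'' part, contributing the bulk factor involving $\varphi(d)$ and $\gamma$), primes $q \mid \gcd(\gamma,g)$ with $\rho_q = r_q$ (where both groups have the same nontrivial action at $q$, so a genuine choice of how $N$ sits relative to $\Gamma$ appears), and the remaining primes dividing $\gamma g$ but not both. The factor $2^{\omega(g)}$ comes from the two ways (``$+$'' and ``$-$'') a cyclic piece of $N$ can embed relative to $\langle s \rangle$ at each of the $\omega(g)$ primes dividing $g$; the Euler factor $\varphi(d)$ and the $\gamma/w$ come from the choices of the image of $\tau$ together with the orbit-counting under $\Delta$ on $\K$ (this is precisely why $\kappa_1,\dots,\kappa_w$ and the sets $S_h$ were introduced). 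For a prime $q \in S$ (so $\rho_q = r_q > 2$) the local count of compatible embeddings is $\tfrac{q+1}{2q}$ of the ``generic'' count exactly when $\kappa_h \equiv k^{\pm1} \pmod q$, i.e.\ $q \in S_h$, and is the generic count otherwise; for $q \in T$ (the order-$2$ case) one always loses a factor $q$ because $k \equiv k^{-1} \pmod q$ collapses the two cases, explaining the $\prod_{q\in T} \tfrac{1}{q}$. Summing the local products $\prod_{q \in S_h} \tfrac{q+1}{2q}$ over the $w$ orbit representatives $h$ and assembling the prime-by-prime contributions yields the stated formula.

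The hard part will be two things. First, pinning down exactly which regular subgroups $N \cong G$ arise and showing the count is genuinely multiplicative over primes; the subtlety is that, although $n$ is squarefree, the actions at different primes are linked through the single exponent $k$ (resp.\ $\kappa$), so one must check that the interactions decouple after fixing the orbit representative $\kappa_h$ — this is the content of Remarks \ref{choose-kappa} and \ref{Sh-wd}, i.e.\ that $S_h$ is well defined. Second, bookkeeping the automorphism group: one must verify that quotienting the set of regular embeddings by $\Aut(G)$ (to pass from embeddings to Hopf--Galois structures) interacts correctly with the $\Delta$-orbit decomposition, so that the division by $w = \varphi(\gcd(\delta,d))$ and the remaining $\varphi(d)$, $\gamma$ factors come out with the right exponents rather than double-counting. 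I expect the cleanest route is to count ordered pairs (regular subgroup, chosen isomorphism to $G$) inside $\Hol(\Gamma)$, compute this prime-by-prime, and only divide by $|\Aut(G)|$ at the very end; the squarefree hypothesis makes $|\Aut(G)|$ itself a clean product $\prod_{q \mid e}(\text{something}) \cdot \varphi(\text{stuff})$, which should cancel against the raw count to leave the asymmetric-looking but correct expression above. Consistency with the cyclic case $\Gamma$ cyclic (where $\gamma = \epsilon$, $\delta = 1$, $w=1$) and with the main theorem of \cite{AB} provides a useful sanity check at each stage.
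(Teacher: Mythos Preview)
Your proposal has the right overall shape---a prime-by-prime count assembled via the Chinese Remainder Theorem, stratified by the orbit index $h$---but the setup is reversed in a way that matters. The Byott reformulation (see \S\ref{HGS} and equation~(\ref{HGS-count-formula})) works in $\Hol(G)$, not $\Hol(\Gamma)$: one counts regular subgroups of $\Hol(G)$ isomorphic to $\Gamma$, obtaining $e'(\Gamma,G)$, and then
\[
   e(\Gamma,G) = \frac{|\Aut(\Gamma)|}{|\Aut(G)|}\, e'(\Gamma,G).
\]
Your formula $e(\Gamma,G) = |\Aut(G)|^{-1} \cdot \#\{G \hookrightarrow \Hol(\Gamma)\ \text{regular}\}$ is not correct: its right-hand side equals the number of regular subgroups of $\Hol(\Gamma)$ isomorphic to $G$, which by the same identity with the roles of $G$ and $\Gamma$ swapped is $\frac{|\Aut(\Gamma)|}{|\Aut(G)|}\,e(G,\Gamma)$, not $e(\Gamma,G)$. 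Since the target formula is visibly asymmetric in $G$ and $\Gamma$, this swap derails the computation. Concretely, the necessary condition $\gamma \mid e$ arises in the paper (Proposition~\ref{d-div-gam-del}) because the commutator subgroup $\langle X\rangle$ of a regular copy of $\Gamma$ inside $\Hol(G)$ must act regularly on a subset of size $\gamma$ of the cyclic group $\langle\sigma\rangle$ of order $e$; working in $\Hol(\Gamma)$ you would instead derive the dual condition $g \mid \epsilon$.

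Your closing sanity check also reflects this confusion: when $\Gamma$ is cyclic one has $\delta=1$, hence $\kappa=1$, $\zeta=\epsilon$ and $\gamma=1$ (not $\gamma=\epsilon$); the formula then collapses to $2^{\omega(g)}\varphi(d)$, matching Corollary~\ref{Gam-cyc}. Beyond the direction issue, your heuristic explanations of the factors $2^{\omega(g)}$, $\prod_{q\in T}1/q$ and $(q+1)/(2q)$ are in the right spirit but too vague to carry the argument. In the paper these emerge from an explicit parametrisation of generator pairs $(X,Y)$ for $\Gamma^*\subset\Hol(G)$ by quintuples $(t,a,c,u,v)$ subject to congruences mod each prime $q\mid e$ (Lemma~\ref{quin} and Table~\ref{quintuples}); the local counts $2q^2(q-1)$, $q(q^2-1)$, $2q(q-1)$ for $q$ in $R\cup S_h'$, $S_h^\pm$, $T$ respectively are what produce those ratios after dividing through.
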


\begin{remark} 
In \cite[Theorem 2]{AB}, we gave a formula, as a sum over
factorisations $n=dgz$ of $n$ into $3$ factors, for the total number
of Hopf-Galois structures on a cyclic extension of squarefree degree
$n$. Attempts to give a similar formula when the Galois group is an
arbitrary group $\Gamma$ of squarefree order $n$ lead to complicated
multiple sums which do not appear to admit any significant
simplification. We therefore do not give any results of this sort in
this paper. For some partial results in this direction, see
\cite[\S5.5]{thesis}
\end{remark}

\section{Background on Hopf-Galois Structures} \label{HGS}

Let $L/K$ be a finite extension of fields and let $H$ be a
cocommutative $K$-Hopf algebra with a $K$-linear action on $L$. We say
that $L/K$ is an $H$-Galois extension if the following
conditions are satisfied: 
\begin{itemize}
\item[(i)] $h \cdot (xy) = \sum_h (h_{(1)} \cdot x) (h_{(2)} \cdot y)$
  for all $h \in H$ and $x$, $y \in L$, where we use Sweedler's
  notation $h \mapsto \sum_h h_{(1)} \otimes h_{(2)}$ for the
  comultiplication of $H$;
\item[(ii)] $h \cdot 1=\varepsilon(h) 1$, where $\varepsilon:H \to K$
  is the counit of $H$;
\item[(iii)] the $K$-linear map $\theta:L \otimes_K L \to \Hom_K(L,L)$
 is bijective, where $\theta(x \otimes y)(h)= x (h \cdot y)$. 
\end{itemize}
A Hopf-Galois structure on $L/K$ consists of a cocommutative $K$-Hopf
algebra $H$, together with an action of $H$ on $L$ making $L/K$ into an
$H$-Galois extension. 

Greither and Pareigis \cite{GP} showed how all Hopf-Galois structures
on a finite separable field extension $L/K$ can be described in terms
of group theory. We consider here only the case when $L/K$ is a Galois
extension in the classical sense (i.e., normal as well as
separable). Let $\Gamma=\Gal(L/K)$. Then the Hopf-Galois structures on
$L/K$ correspond bijectively to the regular subgroups $G \subset
\Perm(\Gamma)$ which are normalised by the group $\lambda(\Gamma)$ of
left translations by $\Gamma$. Here $\Perm(\Gamma)$ is the group of
permutations of the underlying set of $\Gamma$. The Hopf algebra
acting on $L$ in the Hopf-Galois structure corresponding to $G$ is 
$H=L[G]^\Gamma$, the algebra of $\Gamma$-fixed points of $L[G]$, where $\Gamma$
acts simultaneously on $L$ as field automorphisms and on $G$ as
conjugation by left translations inside $\Perm(\Gamma)$. The {\em
  type} of a Hopf-Galois structure is by definition the isomorphism
type of the corresponding group $G$.

Given abstract groups $\Gamma$ and $G$ of the same (finite) order, we
denote by $e(\Gamma,G)$ the number of Hopf-Galois structures of type
$G$ on a Galois extension $L/K$ with $\Gal(L/K) \cong \Gamma$. By the
theorem of Greither and Pareigis, this is just the number of regular
subgroups of $\Perm(\Gamma)$ which are isomorphic to $G$ and are
normalised by $\lambda(\Gamma)$. Equivalently, $e(\Gamma,G)$ is the
number of $\Aut(G)$-orbits of regular embeddings $\alpha:G \to
\Perm(\Gamma)$ whose image $\alpha(G)$ is normalised by
$\lambda(\Gamma)$.  As shown in \cite{unique}, there is a bijection
between the regular embeddings $\alpha:G \to \Perm(\Gamma)$ and the
regular embeddings $\beta: \Gamma \to \Perm(G)$, under which
$\alpha(G)$ is normalised by $\lambda(\Gamma)$ if and only of
$\beta(\Gamma)$ is contained in the normaliser $\Hol(G)=\lambda(G)
\rtimes \Aut(G)$ of $G$ in $\Perm(G)$. Thus $e(\Gamma,G)$ is the
number of $\Aut(G)$-orbits of regular embeddings $\Gamma \to \Hol(G)$,
or, equivalently,
\begin{equation} \label{HGS-count-formula}
 e(\Gamma,G) = 
   \frac{|\Aut(\Gamma)|}{|\Aut(G)|} e'(\Gamma,G), 
\end{equation}
where $e'(\Gamma,G)$ is the number of regular subgroups of $\Hol(G)$
isomorphic to $\Gamma$. As $\Hol(G)$ is usually much smaller than
$\Perm(\Gamma)$, it is in practice easier to calculate $e(\Gamma,G)$
by working in $\Hol(G)$ and using (\ref{HGS-count-formula}), rather
than by working directly in $\Perm(\Gamma)$. 

\section{The holomorph of $G$}

Let $G=G(d,e,k)$ be a group of squarefree order $n=de$ as in 
Lemma \ref{sf-class}. In this section, we recall from \cite{AB} some
results on $G$, $\Aut(G)$ and $\Hol(G)$. 

By \cite[Prop.~3.5]{AB}, the centre $Z(G)$ and commutator subgroup
$G'$ of $G$ are respectively the cyclic groups $\langle \sigma^g
\rangle$ of order $z$ and $\langle \sigma^z \rangle$ of order $g$,
where $z$, $g$ are defined in (\ref{def-zg}).

We write elements of $\Hol(G)=G \rtimes \Aut(G)$ as $[x,\alpha]$,
where $x \in G$ and $\alpha \in \Aut(G)$. The multiplication in
$\Hol(G)$ is given by
\begin{equation} \label{hol-mul}
 [x,\alpha] [x',\alpha'] = [x \alpha(x') , \alpha \alpha'], 
\end{equation}
and the action of $\Hol(G)$ on $G$ is given by
$$  [x,\alpha] \cdot y = x \alpha(y). $$
Thus the normal subgroup $G$ in $\Hol(G)=G \rtimes \Aut(G)$ identifies as the
group of left translations in $\Perm(G)$. 

In \cite[Lemma 4.1]{AB} we determined $\Aut(G)$:

\begin{lemma}  \label{Aut-G}
$\Aut(G) \cong \Z_g \rtimes \Z_e^\times$, and $\Aut(G)$ is generated by
  $\theta$ and by $\phi_s$ for $s \in \Z_e^\times$ where
$$ \theta(\sigma) = \sigma, \qquad \theta(\tau) = \sigma^z \tau $$
and 
$$ \phi_s(\sigma) = \sigma^s, \qquad \phi_s(\tau)= \tau. $$
These satisfy the relations  
$$ \theta^g = \id_G, \qquad \phi_s \phi_t= \phi_{st}, \qquad \phi_s
\theta \phi_s^{-1}=\theta^s. $$ 
\end{lemma}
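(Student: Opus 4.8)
The plan is to compute $\Aut(G)$ directly from the presentation $G = \langle \sigma, \tau : \sigma^e = \tau^d = 1,\ \tau\sigma\tau^{-1} = \sigma^k\rangle$, exploiting the fact that $\gcd(d,e)=1$. First I would establish a canonical-form description of an arbitrary automorphism. Since $\gcd(d,e)=1$, the subgroup $\langle\sigma\rangle$ is the unique Sylow system component of order $e$; more precisely, $\langle\sigma\rangle$ is the set of elements whose order divides $e$, hence it is characteristic in $G$. Therefore any $\alpha \in \Aut(G)$ restricts to an automorphism of $\langle\sigma\rangle \cong \Z_e$, so $\alpha(\sigma) = \sigma^s$ for some $s \in \Z_e^\times$. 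Next I would pin down $\alpha(\tau)$: writing a general element of $G$ uniquely as $\sigma^a\tau^b$ with $0 \le a < e$, $0 \le b < d$, and using that $\alpha(\tau)$ must have order $d$ and satisfy $\alpha(\tau)\alpha(\sigma)\alpha(\tau)^{-1} = \alpha(\sigma)^k$, I would show $\alpha(\tau) = \sigma^a\tau$ where the conjugation relation forces $\sigma^{ka} = \sigma^{a}$... wait, more carefully, the relation $\alpha(\tau)\sigma^s\alpha(\tau)^{-1} = \sigma^{sk}$ together with $\alpha(\tau) = \sigma^a\tau^b$ gives $\sigma^{s k^b} = \sigma^{sk}$, so $k^b \equiv k \pmod e$, i.e.\ $b \equiv 1 \pmod{d}$ since $\ord_e(k) = d$; hence $b = 1$. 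The constraint on $a$ then comes from requiring $\alpha(\tau)^d = 1$: computing $(\sigma^a\tau)^d = \sigma^{a(1 + k + \cdots + k^{d-1})}$, and one shows $1 + k + \cdots + k^{d-1} \equiv 0 \pmod{z}$ but the relevant condition is that $a$ ranges over a coset structure giving exactly $g = e/z$ choices. This is where the subtlety lies.

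The key technical point I expect to be the main obstacle is determining precisely which values of $a \in \Z_e$ give a well-defined automorphism, and identifying that set with $\Z_g$. The map $\sigma \mapsto \sigma^s$, $\tau \mapsto \sigma^a\tau$ is a homomorphism (hence automorphism, by order count, since it is surjective) if and only if the relation $\tau^d = 1$ is preserved, i.e.\ $(\sigma^a\tau)^d = 1$ in $G$. Since $(\sigma^a\tau)^d = \sigma^{a(1+k+\cdots+k^{d-1})}$ and one needs this to be $1$ in $\langle\sigma\rangle \cong \Z_e$, the condition is $a \cdot N \equiv 0 \pmod e$ where $N = 1 + k + \cdots + k^{d-1}$. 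I would compute $\gcd(N, e)$: using that $k \equiv 1 \pmod z$ and that modulo each prime $q \mid g$ the element $k$ has order $r_q > 1$, one gets $N \equiv d \pmod z$ (a unit mod $z$ since $\gcd(d,e)=1$) and $N \equiv 0 \pmod q$ for each $q \mid g$ whenever $r_q \mid d$, which holds; so $\gcd(N,e) = g$ and the solution set for $a$ is $\{0, z, 2z, \ldots, (g-1)z\}$, a cyclic group of order $g$ under addition. Actually I should double-check the case analysis at primes dividing $g$ to confirm $q \mid N$, since $N = (k^d - 1)/(k-1)$ is only formally valid; the honest computation is $(k-1)N = k^d - 1 \equiv 0 \pmod{q}$ while $k - 1 \not\equiv 0 \pmod q$, forcing $q \mid N$. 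This confirms $\gcd(N,e) = g$.

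Having parametrized $\Aut(G)$ by pairs $(s, a) \in \Z_e^\times \times g\Z_e \cong \Z_e^\times \times \Z_g$ as a set, it remains to identify the group structure and exhibit the generators. I would define $\theta$ by $(s,a) = (1, z)$ and $\phi_s$ by $(s, 0)$, then verify the three asserted relations $\theta^g = \id_G$, $\phi_s\phi_t = \phi_{st}$, $\phi_s\theta\phi_s^{-1} = \theta^s$ by direct computation using the composition rule: composing $\beta: \sigma\mapsto\sigma^{s'}, \tau\mapsto\sigma^{a'}\tau$ after $\alpha: \sigma\mapsto\sigma^s,\tau\mapsto\sigma^a\tau$ gives $\sigma \mapsto \sigma^{ss'}$ and $\tau \mapsto \alpha(\sigma^{a'})\alpha(\tau) = \sigma^{sa' + a}\tau$. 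From this rule, $\phi_s\theta\phi_s^{-1}$ sends $\tau \mapsto \sigma^{sz}\tau$, which is $\theta^s$ (as $\theta^j$ sends $\tau \mapsto \sigma^{jz}\tau$ by induction), and $\phi_s\phi_t = \phi_{st}$ is immediate; the normal subgroup generated by $\theta$ is $\cong \Z_g$ and is complemented by $\{\phi_s\}\cong\Z_e^\times$, giving the semidirect product decomposition $\Aut(G) \cong \Z_g \rtimes \Z_e^\times$. The only genuine care needed is the order computation at the primes dividing $g$ described above; everything else is routine bookkeeping with the canonical form $\sigma^a\tau^b$.
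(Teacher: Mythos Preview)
Your argument is correct and complete. The paper itself does not give a proof of this lemma; it simply cites \cite[Lemma 4.1]{AB}, so there is no in-paper argument to compare against. Your direct computation from the presentation---showing $\langle\sigma\rangle$ is characteristic, forcing $b=1$ via the conjugation relation, and then determining the admissible $a$ by computing $\gcd(N,e)=g$ where $N=1+k+\cdots+k^{d-1}$---is the standard route and almost certainly what appears in \cite{AB}. The key step, namely the prime-by-prime analysis showing $N$ is a unit modulo $z$ (since $N\equiv d$ and $\gcd(d,z)=1$) and $N\equiv 0$ modulo each $q\mid g$ (since $(k-1)N=k^d-1\equiv 0$ with $k\not\equiv 1$), is handled correctly.

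One minor wording slip: when you say ``composing $\beta$ after $\alpha$'' you actually compute $\alpha\circ\beta$ (as your expression $\alpha(\sigma^{a'})\alpha(\tau)$ makes clear); the resulting formula $\tau\mapsto\sigma^{sa'+a}\tau$ is correct for $\alpha\circ\beta$ and the verification of the three relations goes through exactly as you describe.
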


An arbitrary element of $\Hol(G)$ therefore has the form $[\sigma^u
  \tau^f, \theta^v \phi_t]$ for $u \in \Z_g$, $f \in \Z_d$, $v \in
\Z_g$ and $t \in \Z_e^\times$. We record some consequences of this.

\begin{proposition} \label{proj-hom} \ 

\begin{itemize}
\item[(i)] The map $\Hol(G) \to \{ \phi_s : s \in \Z_e^\times\} \cong
  \Z_e^\times$, given by $[\sigma^u \tau^f, \theta_v \phi_t] \mapsto
  \phi_t$, is a group homomorphism.
\item[(ii)] The map $\Hol(G) \to \langle \tau \rangle \cong C_d$,
  given by $[\sigma^u \tau^f, \theta_v \phi_t] \mapsto \tau^f$, is a
  group homomorphism
\end{itemize}
\end{proposition}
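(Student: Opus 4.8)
The plan is to realise both maps as composites of evident quotient homomorphisms, so that essentially no computation is needed beyond reading off kernels from Lemma~\ref{Aut-G}.

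For (i), I would factor through $\Aut(G)$. The second-coordinate projection $[x,\alpha]\mapsto\alpha$ is a homomorphism $\Hol(G)=G\rtimes\Aut(G)\to\Aut(G)$ by (\ref{hol-mul}). By Lemma~\ref{Aut-G}, $\langle\theta\rangle$ is the normal $\Z_g$-factor of $\Aut(G)\cong\Z_g\rtimes\Z_e^\times$ (its normality being precisely the relation $\phi_s\theta\phi_s^{-1}=\theta^s$), with quotient $\{\phi_s:s\in\Z_e^\times\}\cong\Z_e^\times$. Composing $\Hol(G)\to\Aut(G)$ with $\Aut(G)\to\Aut(G)/\langle\theta\rangle$ then gives the map of (i), once one notes that $\theta^v\phi_t$ has image $\phi_t$. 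If one prefers a direct check, it amounts to the identity $(\theta^v\phi_t)(\theta^{v'}\phi_{t'})=\theta^{v+v't}\phi_{tt'}$, whose $\phi$-part is $\phi_{tt'}$.

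For (ii), the key point I would isolate first is that $\langle\sigma\rangle$ is \emph{characteristic} in $G$: by Lemma~\ref{Aut-G}, the generators of $\Aut(G)$ satisfy $\theta(\sigma)=\sigma$ and $\phi_s(\sigma)=\sigma^s$, so $\langle\sigma\rangle$ is stable under every automorphism of $G$; being characteristic in the normal subgroup $G$ of $\Hol(G)$, it is normal in $\Hol(G)$. Next, $\Aut(G)$ acts trivially on $G/\langle\sigma\rangle$, since $\theta(\tau)=\sigma^z\tau$ and $\phi_s(\tau)=\tau$ are both congruent to $\tau$ modulo $\langle\sigma\rangle$. Hence $\Hol(G)/\langle\sigma\rangle\cong(G/\langle\sigma\rangle)\times\Aut(G)$, and the composite of $\Hol(G)\to\Hol(G)/\langle\sigma\rangle$ with projection onto the first factor is a homomorphism $\Hol(G)\to G/\langle\sigma\rangle$. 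Finally, since $\gcd(d,e)=1$, the quotient map $G\to G/\langle\sigma\rangle$ restricts to an isomorphism $\langle\tau\rangle\cong G/\langle\sigma\rangle\cong C_d$, and under this identification the composite above sends $[\sigma^u\tau^f,\alpha]$ to $\tau^f$, as asserted.

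The only genuine subtlety — and where I expect a reader might stumble — is that $\langle\tau\rangle$ is \emph{not} normal in $G$, so there is no naive ``projection onto the $\tau$-component'' of $\Hol(G)$; the argument works because it factors through the characteristic subgroup $\langle\sigma\rangle$ instead. If one wishes to bypass this structural argument, (ii) also falls to a direct expansion via (\ref{hol-mul}): the element $(\theta^v\phi_t)(\sigma^{u'}\tau^{f'})$ has $\tau$-exponent $f'$, and multiplying it on the left by $\sigma^u\tau^f$ and moving powers of $\sigma$ past $\tau^f$ using $\tau\sigma\tau^{-1}=\sigma^k$ leaves the $\tau$-exponent equal to $f+f'$ modulo $d$. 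Part (i) presents no real obstacle.
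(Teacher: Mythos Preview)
Your proof is correct and follows essentially the same approach as the paper: for (i) you factor through the quotient $\Hol(G)\to\Aut(G)\to\Aut(G)/\langle\theta\rangle$, exactly as the paper does; for (ii) your key observation that every automorphism of $G$ acts trivially on $G/\langle\sigma\rangle$ is precisely what the paper invokes (citing \cite[Prop.~4.3]{AB}). Your version is simply more detailed, and your remark that $\langle\tau\rangle$ is not normal in $G$ mirrors the paper's parenthetical warning that $[\sigma^u\tau^f,\theta^v\phi_t]\mapsto\sigma^u\tau^f$ is not a homomorphism.
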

\begin{proof}
\noindent (i) The map is the composite of the two quotient homomorphisms
$\Hol(G) \onto \Aut(G) \onto \Z_e^\times$.

\noindent (ii) This follows from the fact that every automorphism of $G$ induces
  the trivial automorphism on the quotient $G/\langle \sigma \rangle
  \cong C_d$ of $G$, as explained in the proof of
  \cite[Prop.~4.3]{AB}. (Note that the map $[\sigma^u \tau^f, \theta^v
    \phi_t] \mapsto \sigma^u \tau^f$ is {\em not} a homomorphism.)
\end{proof}

In the next section, we will consider pairs of elements of $\Hol(G)$
of the following special form:
\begin{equation} \label{def-XY}
  X=[ \sigma^a, \theta^c], \qquad Y=[\sigma^u \tau,
    \theta^v \phi_{t}] .
\end{equation}
Thus $X$ does not involve $\tau$ or any $\phi_s$, and $\tau$ occurs
in $Y$ with exponent $1$. 

As $\theta(\sigma)=\sigma$, it is clear that  
\begin{equation} \label{pow-X}
  X^j= [\sigma^{aj}, \theta^{cj}], \qquad X^j \cdot \sigma^i =
  \sigma^{aj+i}  \mbox{ for $i$, $j \geq 0$.}
\end{equation}

\begin{lemma}  \label{Y-pow}
For $Y$ as in (\ref{def-XY}) and $j\geq 0$, we have 
$$   Y^j = [ \sigma^{A(j)} \tau^j, \theta^{vS(t,j)} \phi_{t^j}] $$
where 
\begin{equation} \label{def-A}
  A(j) = uS(tk,j) + vzkT(k,t,j),  
\end{equation}
with
$$     S(m,j)= \sum_{i=0}^{j-1} m^i. $$
and
$$  T(k,t,j) = \sum_{h=0}^{j-1} S(t,h) k^{h-1} \mbox{ for } j \geq 1,
  \qquad  T(k,t,0)=0. $$
\end{lemma}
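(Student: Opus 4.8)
The plan is to establish the formula by induction on $j$, using the multiplication rule (\ref{hol-mul}) in $\Hol(G)$ together with the defining relation $\tau\sigma\tau^{-1} = \sigma^k$ of $G$ from Lemma \ref{sf-class} and the relations satisfied by $\theta$ and the $\phi_s$ from Lemma \ref{Aut-G}. The base case $j = 0$ is immediate: the empty sums give $S(t,0) = 0$ and, by definition, $T(k,t,0) = 0$, so $A(0) = 0$, and the asserted formula reads $Y^0 = [1_G, \id_G]$, which is correct.

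For the inductive step, assume the formula holds for some $j \geq 0$, and compute $Y^{j+1} = Y^j \cdot Y$ via (\ref{hol-mul}):
$$ Y^{j+1} = \left[\, \sigma^{A(j)}\tau^j \cdot \left(\theta^{vS(t,j)}\phi_{t^j}\right)\!\left(\sigma^u\tau\right),\ \ \theta^{vS(t,j)}\phi_{t^j}\,\theta^v\phi_t \,\right]. $$
First I would treat the automorphism component. By Lemma \ref{Aut-G}, $\phi_s\theta\phi_s^{-1} = \theta^s$ and $\phi_s\phi_t = \phi_{st}$, so $\phi_{t^j}\theta^v\phi_t = \theta^{vt^j}\phi_{t^{j+1}}$; collecting exponents of $\theta$ and using the trivial identity $S(t,j) + t^j = S(t,j+1)$ shows that the second entry above equals $\theta^{vS(t,j+1)}\phi_{t^{j+1}}$, as required.

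Next I would compute the first entry. A one-line auxiliary induction from $\theta(\sigma) = \sigma$, $\theta(\tau) = \sigma^z\tau$ gives $\theta^p(\tau) = \sigma^{pz}\tau$ for $p \geq 0$ (this is well defined since $\sigma$ has order $e$ and $\theta$ has order $g = e/z$). Applying $\phi_{t^j}$, which fixes $\tau$ and raises $\sigma$ to the $t^j$, and then $\theta^{vS(t,j)}$, which fixes $\sigma$, we obtain
$$ \left(\theta^{vS(t,j)}\phi_{t^j}\right)\!\left(\sigma^u\tau\right) = \sigma^{ut^j + vzS(t,j)}\tau. $$
Finally, $\tau\sigma\tau^{-1} = \sigma^k$ gives $\tau^j\sigma^c = \sigma^{ck^j}\tau^j$, whence
$$ \sigma^{A(j)}\tau^j \cdot \sigma^{ut^j + vzS(t,j)}\tau = \sigma^{A(j) + (ut^j + vzS(t,j))k^j}\,\tau^{j+1}, $$
so that $A(j+1) = A(j) + ut^jk^j + vzS(t,j)k^j$. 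It then remains to reconcile this with the closed form in (\ref{def-A}), which I would do using the two recursions $S(tk,j+1) = S(tk,j) + (tk)^j$ and $T(k,t,j+1) = T(k,t,j) + S(t,j)k^{j-1}$, both immediate from the definitions: indeed
$$ uS(tk,j+1) + vzk\,T(k,t,j+1) = \bigl(uS(tk,j) + vzk\,T(k,t,j)\bigr) + ut^jk^j + vzS(t,j)k^j, $$
which is exactly $A(j) + ut^jk^j + vzS(t,j)k^j$. This closes the induction.

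There is no genuine obstacle; the lemma is a bookkeeping computation. The points needing care are: keeping the two moduli straight (exponents of $\sigma$ are read modulo $e$, those of $\theta$ modulo $g = e/z$, which is what makes $\theta^p(\tau) = \sigma^{pz}\tau$ unambiguous); correctly applying the two non-abelian relations $\phi_s\theta\phi_s^{-1} = \theta^s$ and $\tau\sigma\tau^{-1} = \sigma^k$ when moving terms past one another; and, above all, arranging the computation so that the recursions for $S$ and $T$ drop out. The faintly awkward normalisation of $T(k,t,j)$ with a factor $k^{h-1}$ (rather than $k^h$) is precisely what makes the recursion $T(k,t,j+1) = T(k,t,j) + S(t,j)k^{j-1}$ hold with the stated $A(j)$, and hence what makes the induction close.
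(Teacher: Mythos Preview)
Your proof is correct and complete; the induction, the handling of the automorphism relations from Lemma \ref{Aut-G}, and the recursion check for $A(j)$ all go through exactly as you describe. The paper itself does not prove this lemma but simply cites \cite[Lemma 4.2]{AB}, so there is nothing to compare against here beyond noting that your direct inductive computation is the natural (and presumably the cited) argument.
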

\begin{proof} See \cite[Lemma 4.2]{AB}.
\end{proof}

Using the fact that $k^d \equiv 1 \pmod{e}$, we can determine the
residue classes of the sums in Lemma \ref{Y-pow} at primes dividing $e$
in the case that $j$ is a multiple of $d$.

\begin{proposition} \label{ST-cong}
Let $q$ be a prime dividing $e$. Then we have the following
congruences mod $q$. (We omit the modulus for brevity.)
\begin{itemize}
\item[(i)] For any $s$, $i \in \Z$ with $i \geq 0$, we have 
$$   S(s,di) \equiv \begin{cases} 
   di & \mbox{if } s \equiv 1; \cr \cr
   \displaystyle{\frac{s^{di}-1}{s-1}} &
   \mbox{otherwise}. \end{cases} $$
\item[(ii)] If $k \not \equiv 1$
  then, for any $t$, $i \in \Z$ with $i \geq 0$, we have
$$ T(k,t,di) \equiv \begin{cases}
    \displaystyle{\frac{di}{k(k-1)}} & \mbox{if } t \equiv 1; \cr \cr  
    \displaystyle{\frac{di}{k(t-1)}} & \mbox{if } tk \equiv 1; \cr \cr
   \displaystyle{\frac{ (t^{di}-1)}{k(t-1)(tk-1)}} &
   \mbox{otherwise}. \end{cases}  $$ 
\end{itemize}
\end{proposition}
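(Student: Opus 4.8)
The plan is to prove Proposition \ref{ST-cong} by reducing everything to closed-form geometric-series identities and then exploiting the single arithmetic fact $k^d \equiv 1 \pmod{e}$, hence $k^d \equiv 1 \pmod q$ for each prime $q \mid e$. For part (i), recall $S(s,N) = \sum_{i=0}^{N-1} s^i$, which equals $N$ when $s \equiv 1 \pmod q$ and $(s^N-1)/(s-1)$ otherwise (the division making sense in $\Z_q$ since $s-1$ is a unit mod $q$ when $s \not\equiv 1$). Setting $N = di$ gives the stated formula immediately; there is essentially nothing to do here beyond noting that the two cases are exactly the two cases in the definition of a finite geometric sum over the field $\Z_q$. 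The hypothesis $k^d\equiv 1$ is not even needed for (i); it becomes essential only in (ii), where it lets us collapse $k^{di}$ to $1$.

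For part (ii), I would first obtain a closed form for $T(k,t,j) = \sum_{h=0}^{j-1} S(t,h)\, k^{h-1}$ valid as an identity in $\Z_q$, treating the generic case $t \not\equiv 1$, $tk \not\equiv 1$, $k \not\equiv 1$ first. Substitute $S(t,h) = (t^h-1)/(t-1)$ to get
$$ T(k,t,j) \equiv \frac{1}{k(t-1)} \sum_{h=0}^{j-1}\bigl( (tk)^h - k^h \bigr) = \frac{1}{k(t-1)}\left( \frac{(tk)^j-1}{tk-1} - \frac{k^j-1}{k-1} \right). $$
Now specialise $j = di$. Since $k^d \equiv 1$, we have $k^{di} \equiv 1$ and $(tk)^{di} \equiv t^{di}$, so the second fraction vanishes and the first becomes $(t^{di}-1)/(tk-1)$, yielding $T(k,t,di) \equiv (t^{di}-1)/\bigl(k(t-1)(tk-1)\bigr)$ as claimed. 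The two degenerate subcases are handled by the same computation with the appropriate geometric sum replaced by a count: if $t \equiv 1$ then $S(t,h) = h$ and $\sum_{h=0}^{di-1} h\, k^{h-1}$ must be evaluated — here one differentiates the identity $\sum_h k^h = (k^{di}-1)/(k-1)$ formally, or more safely writes $\sum_{h=0}^{di-1} h k^{h-1}$ and uses $k^{di}\equiv 1$ together with the standard formula, arriving at $di/(k(k-1))$ after simplification; if $tk \equiv 1$ then the term $(tk)^h = 1$ for all $h$ and the sum is $\frac{1}{k(t-1)}\bigl( di - (k^{di}-1)/(k-1)\bigr) \equiv di/(k(t-1))$ since $k^{di}\equiv 1$.

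The main obstacle, and the only place requiring genuine care rather than bookkeeping, is the subcase $t \equiv 1 \pmod q$: here a naive "differentiate the geometric series" argument is suspect in characteristic $q$, so I would instead argue directly. Write $\sum_{h=0}^{N-1} h k^{h-1}$ with $N = di$ and $k \not\equiv 1$; multiplying by $(k-1)^2$ and telescoping gives $(k-1)^2 \sum_{h=0}^{N-1} h k^{h-1} \equiv (N-1)k^N - N k^{N-1} + 1 \cdot(\text{something})$ — more precisely one uses the clean identity $(k-1)\sum_{h=0}^{N-1} h k^{h} = (N-1)k^{N} - \sum_{h=1}^{N-1} k^h + \dots$, which I would derive by an explicit telescoping valid over any commutative ring, then substitute $k^N = k^{di} \equiv 1$. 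All the $k^N, k^{N-1}$ terms collapse and only the $N$-linear term survives, giving $di/(k(k-1))$ modulo $q$. Once this subcase is pinned down, assembling the three cases of (ii) and the two of (i) into the stated piecewise formulas is purely formal. I would also remark that in every case the relevant denominators ($k-1$, $t-1$, $tk-1$, $k$) are units in $\Z_q$ precisely because we are in the complementary "otherwise" branch or because $q \mid e$ forces $q \nmid k$, so each displayed fraction is a well-defined element of $\Z_q$.
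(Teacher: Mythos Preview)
Your argument is correct. The paper itself does not prove this proposition here but simply cites \cite[Prop.~5.2]{AB}, so there is no in-paper proof to compare against; your direct computation via geometric-series identities and the relation $k^{di}\equiv 1\pmod q$ is exactly the natural route and matches what one finds in the cited source.

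One small point worth tightening: in the subcase $t\equiv 1$ your sketch trails off into ellipses. The clean way to finish is the telescoping identity
\[
(k-1)\sum_{h=0}^{N-1} h\,k^{h} \;=\; (N-1)k^{N} - \sum_{h=1}^{N-1} k^{h} \;=\; (N-1)k^{N} - S(k,N) + 1,
\]
valid in any commutative ring. Substituting $N=di$, $k^{N}\equiv 1$, and $S(k,N)\equiv 0$ (since $k\not\equiv 1$) gives $\sum_{h=0}^{di-1} h\,k^{h}\equiv di/(k-1)$, and dividing by $k$ yields $T(k,1,di)\equiv di/(k(k-1))$ as required. Your remark that $k$ is a unit mod $q$ because $q\mid e$ and $\gcd(k,e)=1$ (implicit in $\ord_e(k)=d$) is the right justification for the division by $k$ throughout.
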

\begin{proof} This is \cite[Prop.~5.2]{AB}.
\end{proof}

\section{Regular subgroups in $\Hol(G)$}

As before, let $G=G(d,e,k)$ be a group of squarefree order $n$ as in
Lemma \ref{sf-class}, and let $\Gamma=G(\delta,\epsilon, \kappa)$ be a
second group of order $n$, as in (\ref{def-Gamma}).  In this section,
we begin our investigation of regular subgroups of $\Hol(G)$
isomorphic to $\Gamma$.

It will be convenient to modify the presentation (\ref{def-Gamma}) of
$\Gamma$.  Set $X=s^\zeta$ and $Y=ts^\gamma$. As $s^\gamma$ generates
the centre of $\Gamma$, we have
\begin{equation} \label{new-pres}
   \Gamma = \langle X, Y \colon X^\gamma=Y^{\zeta\delta} =1, YXY^{-1}
   = X^\kappa \rangle.
\end{equation}
\begin{remark}
In (\ref{new-pres}), $\kappa$ is viewed as an element of
$\Z_\gamma^\times$, with $\gcd(\kappa-1, \gamma) =1$. This is
equivalent to viewing $\kappa$ as an element of $\Z_\epsilon^\times
\cong \Z_\gamma^\times \times \Z_\zeta^\times$ with $\gcd(\kappa-1,
\epsilon) =\zeta$, as in (\ref{def-Gamma}) and (\ref{def-zeta}).
\end{remark}

\begin{proposition} \label{d-div-gam-del}
If $\Hol(G)$ contains a regular subgroup $\Gamma^* \cong \Gamma$ 
then $\gamma \mid e$. Moreover, if $X$ and
$Y$ are generators of $\Gamma^*$ satisfying the relations in
(\ref{new-pres}) then the subgroup $\langle X, Y^d \rangle$ of $
\Gamma^*$ of order $e$ acts regularly on the subset $\{ \sigma^m : m
\in \Z\}$ of $G$.
\end{proposition}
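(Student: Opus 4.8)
The plan is to study how a regular subgroup $\Gamma^* \cong \Gamma$ interacts with the two homomorphisms from $\Hol(G)$ recorded in Proposition \ref{proj-hom}. Write $\pi \colon \Hol(G) \to \Z_e^\times$ for the homomorphism of Proposition \ref{proj-hom}(i) and $\psi \colon \Hol(G) \to \langle \tau \rangle \cong C_d$ for that of Proposition \ref{proj-hom}(ii). Pick generators $X, Y$ of $\Gamma^*$ satisfying the relations in (\ref{new-pres}), so $X$ has order $\gamma$, $Y$ has order $\zeta\delta$, and $YXY^{-1} = X^\kappa$. The element $X$ generates the commutator subgroup of $\Gamma^*$ (it is $(\Gamma^*)' = \langle s^\zeta\rangle$ in the original presentation), so $X$ lies in every subgroup of index a power of a prime dividing $\delta$; in particular $\pi(X)$ and $\psi(X)$ lie in the corresponding commutator-type subgroups. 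The key first step is to pin down the order of $X$ inside $\Hol(G)$: since $\langle \sigma^z \rangle$ (the commutator subgroup of $G$) has order $g$ and $\langle \tau \rangle$ has order $d$ with $\gcd(d,e)=1$, and since $\Aut(G) \cong \Z_g \rtimes \Z_e^\times$, one shows that the element $X$ of order $\gamma$ must have image of order dividing $\lcm$ of things coprime to... — more precisely, I would argue that $\gamma$, being the order of an element whose conjugation action on itself by $Y$ is nontrivial (via $\kappa$ with $\gcd(\kappa - 1, \gamma) = 1$), forces $\gamma$ to be built out of primes for which $G$ admits a compatible action, and the only such primes are those dividing $e$. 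This yields $\gamma \mid e$.

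More concretely, I would proceed as follows. Since $\Gamma^*$ is regular of order $n = de$ on $G$, and $\langle \tau \rangle$ has order $d$, the composite $\psi|_{\Gamma^*} \colon \Gamma^* \to C_d$ is surjective (a counting/regularity argument: the fibres of $\psi$ partition $G$ into $d$ cosets of the subgroup $\langle \sigma \rangle$ of order $e$, and $\Gamma^*$ acts transitively, so it must hit all $d$ values). Its kernel $N = \ker(\psi|_{\Gamma^*})$ then has order $e$ and maps into $\langle \sigma \rangle \rtimes \{\theta^v\phi_t\}$; I claim $N = \langle X, Y^d \rangle$. Indeed $\psi(X) = 1$ because $X$ is a commutator in $\Gamma^*$ and $C_d$ is abelian, and $\psi(Y^d) = 1$ since $\psi(Y)$ has order dividing $\zeta\delta$ whose coprime-to-... wait — one needs $\psi(Y)^d$... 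Actually $\psi(Y) \in C_d$ so $\psi(Y)^d = 1$ automatically; and $\langle X, Y^d \rangle$ has index dividing $d$ in $\Gamma^*$ (the quotient $\Gamma^*/\langle X, Y^d\rangle$ is generated by the image of $Y$ and is cyclic of order dividing $d$), hence has order at least $e$, hence equals $N$ by order count. This proves the "moreover" clause, and regularity of $N = \langle X, Y^d\rangle$ on the $\langle \sigma \rangle$-orbit $\{\sigma^m : m \in \Z\}$ follows since $N$ has order $e = |\{\sigma^m\}|$ and $N \cdot 1_G \subseteq \langle \sigma \rangle$ (as $N \subseteq \langle \sigma \rangle \rtimes \Aut(G)$ and $[\sigma^u \tau^f, \alpha]\cdot 1 = \sigma^u\tau^f$ lies in $\langle\sigma\rangle$ exactly when $f \equiv 0$, which holds on $N$), combined with $N$ acting freely.

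It remains to deduce $\gamma \mid e$. Here I would use that $N = \langle X, Y^d\rangle$ has order $e$ and contains $X$ of order $\gamma$; since $N$ is a subgroup of $\langle\sigma\rangle \rtimes \Aut(G)$ acting regularly on the $e$-element set $\{\sigma^m\}$, $N$ is itself a group of order $e$, and $\gamma = \ord(X) \mid |N| = e$. This is the cleanest route and sidesteps any delicate structure theory. The main obstacle I anticipate is the surjectivity of $\psi|_{\Gamma^*}$ onto $C_d$: one must rule out the possibility that $\Gamma^*$ lands in a proper subgroup of $\langle\tau\rangle$, which would make $\Gamma^*$ imprimitive in a way incompatible with transitivity on all of $G$ — the argument is that the $\psi$-fibre over $1$ is exactly $\langle\sigma\rangle$ (an $e$-element set) together with its translates, so a proper image would give an orbit of size $< n$. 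Care is also needed in checking that $[\sigma^u\tau^f,\alpha]$ fixes the subset $\{\sigma^m\}$ setwise precisely when $\psi$ sends it to $1$, which uses $\theta(\sigma)=\sigma$ and $\phi_s(\sigma)=\sigma^s$ from Lemma \ref{Aut-G} so that $\Aut(G)$ preserves $\langle\sigma\rangle$.
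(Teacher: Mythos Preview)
Your argument is correct, and it uses the same key ingredients as the paper: the homomorphism $\psi:\Hol(G)\to\langle\tau\rangle$ of Proposition~\ref{proj-hom}(ii), together with the observation that $X$ lies in the commutator subgroup of $\Gamma^*$ and hence in $\ker\psi$. The organisation differs slightly. The paper first locates some $Y^f$ with $\tau$-exponent~$1$, observes that $\psi(Y^f)$ therefore has order $d$, and deduces $d\mid\zeta\delta$ (equivalently $\gamma\mid e$) from $\ord(Y^f)\mid\zeta\delta$; only afterwards does it identify $\langle X,Y^d\rangle$ as the order-$e$ subgroup acting on $\{\sigma^m\}$. You instead argue directly that $\psi|_{\Gamma^*}$ is surjective (since $[x,\alpha]\cdot 1_G=x$ and $\Gamma^*$ is transitive), identify its kernel $N$ of order $e$, show $\langle X,Y^d\rangle=N$ by squeezing between the upper bound $|N|=e$ and the lower bound coming from $[\Gamma^*:\langle X,Y^d\rangle]\mid d$, and then read off $\gamma=\ord(X)\mid|N|=e$. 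Your route is arguably cleaner in that it avoids singling out a particular power $Y^f$; the paper's route has the advantage of setting up the specific form of $Y$ used in the next lemma. One small point to tidy: when you invoke $[\Gamma^*:\langle X,Y^d\rangle]\mid d$, you implicitly use that $\langle X,Y^d\rangle$ is normal in $\Gamma^*$, which follows since $\langle X\rangle\lhd\Gamma^*$ and $Y$ commutes with $Y^d$; this should be stated explicitly.
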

\begin{proof}
Let $\Gamma^*=\langle X, Y \rangle \cong \Gamma$ be a regular
subgroup, where the generators $X$, $Y$ satisfy the relations in
(\ref{new-pres}).  Then the commutator subgroup of $\Gamma^*$ is
generated by $X$. Thus $X$ lies in the commutator subgroup of
$\Hol(G)$, and therefore in the kernel of the homomorphism $\Hol(G)
\to \langle \tau \rangle$ of Proposition \ref{proj-hom}(ii). Hence $X$
cannot involve $\tau$. But $\Gamma^*$ is regular, so $X^i Y^f \cdot
1_G=\tau$ for some $i$, $f$. It follows that $Y^f=[\sigma^a \tau,
  \psi]$ for some $a \in \Z$ and some $\psi \in \Aut(G)$. Thus, for $j
\geq 0$, we have $Y^{fj} \cdot 1_G \in \{ \sigma^m : m \in \Z\}$ if
and only if $d \mid j$. As $Y$ has order $\zeta \delta$, it follows
that $ d \mid \zeta \delta $. Since $ de= n= \delta \gamma \zeta $,
this is equivalent to $ \gamma \mid e $. Moreover, the subgroup $
\langle X, Y^d \rangle $ has order $ e $ and acts without fixed points
on the subset $\{ \sigma^m : m \in \Z\}$ of $ G $, which has
cardinality $e$. Hence this action is regular.
\end{proof}

In view of the first assertion of Proposition \ref{d-div-gam-del},
we now impose the following hypothesis:

\begin{hypothesis} \label{HGS-exist}
The groups $G$ and $\Gamma$ are chosen so that $\gamma \mid e$. 
\end{hypothesis}

Note that, as shown above,
the condition $\gamma \mid e$ is equivalent to $d \mid \zeta \delta$.

Recall that in \S\ref{statement} we defined the set $\K$ and the group
$\Delta$, and chose a system $\kappa_1, \ldots, \kappa_w$ of orbit
representatives for the action of $\Delta$ on $\K$, with $w=
\varphi(\gcd(\delta,d))$.

\begin{remark} \label{choose-kappa}
If $\gcd(\delta,d)=1$ or $2$, then $w=1$ and we may take
$\kappa_1=\kappa$. If $\gcd(\delta,d)>2$ then 
$w$ is even and $-1 \not \in \Delta$. We may then choose the
$\kappa_h$ so that $\kappa_{w+1-h}=\kappa_h^{-1}$ for $1 \leq h \leq w$. 
\end{remark}

\begin{lemma} \label{h unique}
Let $ \Gamma^* $ be a regular subgroup of $ \Hol(G) $ isomorphic to $
\Gamma $. Then there is a unique $h$ with $1 \leq h \leq w$ such
that $ \Gamma^* $ is generated by a pair of elements $X$, $Y$
in the special form of (\ref{def-XY}) which satisfy the relations
\begin{equation} \label{XY-orders}
X^\gamma = Y^{\zeta\delta} = 1, 
\end{equation}
\begin{equation} \label{XYkap-h}
     YXY^{-1} = X^{\kappa_h}.
\end{equation}
Indeed, $\Gamma^*$ contains exactly $\gamma \varphi(e) w /
\varphi(\delta)$ such pairs of generators. 
\end{lemma}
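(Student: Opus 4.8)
The plan is to analyse the structure of an arbitrary regular subgroup $\Gamma^* \cong \Gamma$ of $\Hol(G)$ in terms of the coordinates provided by Lemma \ref{Aut-G}, and to show that, after adjusting the chosen generators, they can always be brought into the special form (\ref{def-XY}) for exactly one value of $h$. First I would start from a pair of generators $X_0$, $Y_0$ of $\Gamma^*$ satisfying the relations in the modified presentation (\ref{new-pres}). As in the proof of Proposition \ref{d-div-gam-del}, the element $X_0$ lies in the commutator subgroup of $\Hol(G)$, hence in the kernels of both homomorphisms of Proposition \ref{proj-hom}; thus $X_0=[\sigma^a,\theta^c]$ for some $a$, $c$, which is already of the form $X$ in (\ref{def-XY}). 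Applying Proposition \ref{proj-hom}(ii) to $Y_0$, the $\tau$-exponent of $Y_0$ generates $\langle\tau\rangle\cong C_d$ (since $\Gamma^*$ is regular and $X_0$ contributes no $\tau$), so after replacing $Y_0$ by a power $Y_0^{\,b}$ with $b\in\Z_d^\times$ (which, together with a matching adjustment of $X_0$ dictated by the conjugation relation, still gives a generating pair satisfying relations of the same shape but with $\kappa$ replaced by $\kappa^b$) we may assume the $\tau$-exponent is $1$, i.e.\ $Y_0$ has the form $Y$ in (\ref{def-XY}). This is where the set $\K=\{\kappa^r : r\in\Z_\delta^\times\}$ and the refinement to orbits of $\Delta$ enter: the exponent $b$ is only well-defined modulo $d$, but is constrained modulo $\delta$ by the relation in $\Gamma^*$; reconciling these via the Chinese Remainder Theorem on $\gcd(\delta,d)$ shows the residue of $b$ modulo $\gcd(\delta,d)$ is free, so the reachable values of $\kappa^b$ form exactly one $\Delta$-orbit in $\K$, pinning down a unique $h$ with $1\le h\le w$ and forcing (\ref{XYkap-h}).

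Next I would verify that with $X$, $Y$ now in the form (\ref{def-XY}) the order relations (\ref{XY-orders}) indeed hold: $X^\gamma=1$ because $X$ generates the image of the commutator subgroup of $\Gamma$, which has order $\gamma$ under the presentation (\ref{new-pres}); and $Y^{\zeta\delta}=1$ follows from $Y^{\zeta\delta}\cdot 1_G=1_G$ (regularity) once one checks $Y$ has no smaller order, using Lemma \ref{Y-pow} together with Proposition \ref{ST-cong} to compute $Y^{\delta}$ and $Y^{\zeta\delta}$ explicitly at each prime dividing $e$. For the counting statement, I would fix this $h$ and count the ordered pairs $(X,Y)$ of generators of the given abstract group $\Gamma$ (presented as in (\ref{new-pres}) with $\kappa$ replaced by $\kappa_h$) — equivalently the pairs lying in the special form inside $\Gamma^*$. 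An element of order exactly $\gamma$ generating the (unique, characteristic, cyclic of order $\gamma$) commutator subgroup can be chosen in $\varphi(\gamma)$ ways for $X$; but one must be careful, since the special form also requires $X=[\sigma^a,\theta^c]$, so the count is really over those generators of $\Gamma^*$ whose image under the relevant coordinate projections is admissible. The element $Y$ must map to $\tau$ (up to the already-fixed normalisation) under Proposition \ref{proj-hom}(ii) and satisfy $YXY^{-1}=X^{\kappa_h}$; the solutions $Y$ for a fixed valid $X$ form a coset of the centraliser $C_{\Gamma^*}(X)=\langle X\rangle$ intersected with the relevant preimage, giving a factor equal to the size of that preimage. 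Carefully assembling these factors — $\varphi(\gamma)$ choices essentially for $X$, a factor of $\zeta\delta$-type for $Y$, divided by the overcounting from $\Aut$ of the cyclic pieces and the relation tying $X$ and $Y$ together — should yield $\gamma\varphi(e)w/\varphi(\delta)$ after simplification using $n=de=\delta\epsilon=\delta\gamma\zeta$, $\varphi(e)=\varphi(g)\varphi(z)$ and $\varphi(\delta\gamma\zeta)=\varphi(\delta)\varphi(\gamma)\varphi(\zeta)$ (valid since $\gcd(d,e)=1$ and $\gcd(\gamma,\zeta)$ divides appropriately).

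I expect the main obstacle to be the bookkeeping in the counting step, and in particular establishing cleanly that the number of valid $X$'s in special form, divided by the ambiguity in recovering $(d,e,k)$-coordinates, contributes exactly the factor $\varphi(e)/\varphi(\delta)$ rather than $\varphi(\gamma)$: this requires knowing precisely how the coordinates $(a,c)$ of $X=[\sigma^a,\theta^c]$ and $(u,v,t)$ of $Y=[\sigma^u\tau,\theta^v\phi_t]$ are constrained by (\ref{XY-orders})–(\ref{XYkap-h}), which in turn rests on the explicit formulas of Lemma \ref{Y-pow} and Proposition \ref{ST-cong}. A secondary subtlety is the \emph{uniqueness} of $h$: one must rule out that two different $\Delta$-orbit representatives $\kappa_h\neq\kappa_{h'}$ could both arise from suitable regeneratings of the \emph{same} $\Gamma^*$; this is exactly the statement that $\Delta$ acts without fixed points on $\K$ (established in \S\ref{statement}) combined with the fact that the ambiguity in the normalising exponent $b$ is precisely a $\Delta$-coset, so I would isolate this as a short separate argument before doing the count. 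The well-definedness of the construction under the choice of $\kappa_h$ (as opposed to another representative of its orbit) is automatic once uniqueness is in hand, since Lemma \ref{sf-class} guarantees the abstract isomorphism type is unchanged.
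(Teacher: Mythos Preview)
Your overall strategy matches the paper's, but the paper executes several steps more cleanly than you propose, and one of your steps is genuinely unnecessary.

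First, when replacing $Y_0$ by $Y_0^{\,b}$, no ``matching adjustment of $X_0$'' is needed: from $Y_0 X_0 Y_0^{-1}=X_0^{\kappa}$ one gets $Y_0^{\,b} X_0 Y_0^{-b}=X_0^{\kappa^b}$ directly. More importantly, the paper chooses this power $f$ (your $b$) coprime to $\zeta\delta$, not just to $d$; since $d\mid\zeta\delta$ and $\zeta\delta$ is squarefree this is always possible, and it guarantees $Y^f$ still has order $\zeta\delta$. Hence the relations (\ref{XY-orders}) hold automatically, and you do not need Lemma \ref{Y-pow} or Proposition \ref{ST-cong} here at all.

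Second, your uniqueness argument is tangled (``the residue of $b$ modulo $\gcd(\delta,d)$ is free'' has it backwards). The clean route, which also sets up the count, is: having fixed one pair $(X,Y)$ in the special form with $YXY^{-1}=X^{\kappa^f}$, the further replacements that preserve the special form are exactly $x=X^i$, $y=X^jY^m$ with $i\in\Z_\gamma^\times$, $j\in\Z_\gamma$, and $m\in\Z_{\zeta\delta}^\times$ with $m\equiv 1\pmod d$ (this last condition is what keeps the $\tau$-exponent equal to $1$). Then $yxy^{-1}=x^{\kappa^{fm}}$, and since $\kappa$ has order $\delta$ the set $\{\kappa^{fm}:m\equiv 1\pmod d\}$ is precisely the $\Delta$-orbit of $\kappa^f$. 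This gives uniqueness of $h$ directly.

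Third, the counting is then immediate from this same parametrisation: to keep $\kappa_h$ fixed one additionally needs $m\equiv 1\pmod\delta$, hence $m\equiv 1\pmod{\lcm(\delta,d)}$, giving $\varphi(\zeta\delta)w/(\varphi(d)\varphi(\delta))$ choices for $m$, times $\varphi(\gamma)$ for $i$ and $\gamma$ for $j$; the product simplifies to $\gamma\varphi(e)w/\varphi(\delta)$ via $\varphi(\gamma)\varphi(\zeta\delta)=\varphi(n)=\varphi(d)\varphi(e)$. Your centraliser/coset framework would eventually arrive at the same answer, but the direct parametrisation of all admissible $(x,y)$ bypasses the ``bookkeeping obstacle'' you anticipate entirely.
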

\begin{proof}
Since $ \Gamma^* \cong \Gamma $, we can find some pair of generators $
X, Y $ satisfying the relations in (\ref{new-pres}). As in the proof
of Proposition \ref{d-div-gam-del}, $ \tau $ cannot occur in $ X $,
and there is some $ f \in \Z $ so that $ \tau $ occurs with exponent 1
in $Y^f$. Then $ \gcd(f, d) = 1 $. Since $d \mid \zeta \delta$ and
$\zeta \delta$ is squarefree, we may choose $f$ with $\gcd(f,
\zeta\delta) = 1$. Thus $Y^f$ has order $\zeta\delta$. We replace $Y$
by $Y^f$. Our new pair of generators $X$, $Y$ satisfy the relations in
(\ref{new-pres}), but with $\kappa$ replaced by $\kappa^f$. Moreover,
$Y$ is as in (\ref{def-XY}) and $X$ has the form 
$$ X= [ \sigma^a, \theta^c \phi_s].  $$
We claim that $s \equiv 1 \pmod{e}$, so that $X$ is also as in
(\ref{def-XY}).  Applying Proposition \ref{proj-hom}(i) to the
relations $X^\gamma = 1$, $YX = X^\kappa Y$, we obtain $\phi_s^\gamma=
1$, $ \phi_{t} \phi_s= \phi_s^\kappa \phi_t$. Thus $s^\gamma \equiv 1
\equiv s^{\kappa -1} \pmod{e} $.  As $\gcd(\gamma, \kappa -1) = 1$, it
follows that $s \equiv 1 \pmod{e}$ as claimed.

We next consider further changes to our generators $X$, $Y$ so that
they are still of the form (\ref{def-XY}), and satisfy the relations
in (\ref{new-pres}) except that $\kappa$ may be replaced by some other
element of $\mathcal{K}$. Thus we can replace $X$ by $x = X^i$ for any
$i \in \Z_\gamma^\times$, and $Y$ by $y = X^j Y^m$ for any $j \in
\Z_\gamma$ and $m \in \Z_{\zeta \delta}^\times$ with $ m \equiv 1
\pmod{d}$. (The last condition ensures that $\tau$ still occurs with
exponent 1 in $y$.) Then
$$ yxy^{-1} = (X^j Y^m) X^i (Y^{-m} X^{-j}) = X^{i\kappa^m} = x^{\kappa^m} $$
As $\kappa \in \Z_\gamma^\times$ has order $\delta$, we have 
\begin{eqnarray*}
 \{ \kappa^m \colon m \equiv 1 \pmod d \} 
      & = &  \{ \kappa^r \colon r \equiv 1 \pmod {\gcd(\delta,d)} \} \\ 
        & = &  \{ \kappa^r \colon r \in \Delta \}. 
\end{eqnarray*}
Thus, given $r \in \Z_\delta^\times$, the original generators $X$, $Y$
can be replaced by generators $x$, $y$ of the form (\ref{def-XY})
satisfying the relations $x^\gamma = y^{\zeta\delta} = 1$, $yx =
x^{\kappa^r}y$ if and only if $r \in \Delta$. It follows that there is
a unique $h$ such that $\Gamma^*$ contains a pair of generators
satisfying (\ref{def-XY}), (\ref{XY-orders}) and (\ref{XYkap-h}).

Finally, suppose $X$, $Y$ is one such pair of generators. We determine
when $x = X^i$, $y = X^j Y^m$ is another. Since $X$ has order
$\gamma$, (\ref{XYkap-h}) will hold if and only if $\kappa^m \equiv
\kappa \pmod \gamma$.  As $\kappa$ has order $\delta$, this is
equivalent to $m \equiv 1 \pmod \delta$.  Since we already have the
condition $m \equiv 1 \pmod{d}$, and $d \mid \zeta \delta$ by
Hypothesis \ref{HGS-exist}, we must choose $m \in \Z_{\zeta
  \delta}^\times$ with $ m \equiv 1 \pmod {\lcm (\delta,d)}$. Hence
the number of choices for $m$ is $\varphi(\zeta\delta)/\varphi(\lcm(
\delta,d))$. Now, since $d$ and $\delta$ are squarefree, we have
$$ \varphi(\lcm(\delta,d)) = \varphi \left(\frac{d \delta}{\gcd(\delta,
  d)} \right) = 
   \frac{\varphi(d)\varphi(\delta)}{\varphi(\gcd(\delta,d))} =
   \frac{\varphi(d)\varphi(\delta)}{w}. $$ 
So the number of choices for $m$ is 
$\varphi(\zeta \delta)w/(\varphi(d)\varphi(\delta))$. There are
$\varphi(\gamma)$ 
choices for $i \in \mathbb{Z}_\gamma^\times$ and $\gamma$ choices for
$j \in\mathbb{Z}_\gamma $. Therefore, the number of pairs $X$, $Y$ of
generators of $\Gamma^*$ satisfying 
(\ref{def-XY}), (\ref{XY-orders}) and (\ref{XYkap-h}) is  
$$ \frac{\varphi(\gamma) \gamma \varphi(\zeta \delta) w}{\varphi(d)
  \varphi(\delta)} 
 =  \frac{\gamma \varphi(n) w}{\varphi(d) \varphi(\delta)} 
  =  \frac{\gamma \varphi(e) w}{\varphi(\delta)}. $$
\end{proof}

Lemma \ref{h unique} shows that the regular subgroups of $\Hol(G)$
isomorphic to $\Gamma$ fall into $w$ disjoint families $\F_1, \ldots,
\F_w$, where $\F_h$ consists of those subgroups with a pair of
generators $X$, $Y$ satisfying (\ref{def-XY}) and (\ref{XY-orders}), and
satisfying (\ref{XYkap-h}) for the orbit representative $\kappa_h$. These
families therefore correspond to the orbits of $\Delta$ on
$\mathcal{K}$. (We will see in Lemma \ref{count-Nh} that all the $\F_h$ are nonempty.)
However, not every pair of elements $(X, Y)$ satisfying
(\ref{def-XY}), (\ref{XY-orders}) and (\ref{XYkap-h}) will generate a
regular subgroup. We now characterise those that do.

\begin{lemma} \label{XY}
Let $1 \leq h \leq w$ and let $X$, $Y \in \Hol(G)$ be as in
(\ref{def-XY}). Suppose further that $X$ and $Y$ satisfy
(\ref{XYkap-h}).  Then the group $\langle X, Y \rangle \subseteq \Hol(G)$ is
regular on $G$ if and only if
\begin{itemize}
\item[(i)] the group $ \langle X \rangle $ acts regularly on the
  subset $ \{ \sigma^{em/\gamma} \colon m \in \Z \} $ of $G$ of
  cardinality $ \gamma $; 
\item[(ii)] the group $ \langle X, Y^d \rangle $ acts transitively on
  the subset $ \{ \sigma^m \colon m \in \Z \} $ of $ G $ of
  cardinality $e$; 
\item[(iii)] $Y^{\zeta \delta} = 1$.
\end{itemize}
If these conditions hold, then (\ref{XY-orders}) also holds, and $\langle
X, Y \rangle \cong \Gamma$. 
\end{lemma}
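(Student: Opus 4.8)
The plan is to analyse the action of $\langle X, Y\rangle$ on $G$ by exploiting the two homomorphisms of Proposition \ref{proj-hom} together with the explicit power formulas in (\ref{pow-X}) and Lemma \ref{Y-pow}. First I would note that, because $\tau$ occurs with exponent $1$ in $Y$ and not at all in $X$, an arbitrary element of $\langle X, Y\rangle$ sends $1_G$ to a point $\sigma^m\tau^f$ where $f$ is the image under the homomorphism $\Hol(G)\to\langle\tau\rangle$ of Proposition \ref{proj-hom}(ii); since that homomorphism kills $X$ and sends $Y$ to $\tau$, the value of $f$ only depends on the total $Y$-exponent mod $d$. Hence the orbit of $1_G$ meets the ``slice'' $\{\sigma^m\tau^f : m\in\Z\}$ in a set that is a translate of the orbit of $1_G$ under $\langle X, Y^d\rangle$, using that $Y^d$ has the form $[\sigma^{A(d)},\theta^{vS(t,d)}\phi_{t^d}]$ with no $\tau$. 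This reduces regularity of $\langle X, Y\rangle$ on $G$ (a set of size $de$) to: (a) $\langle X, Y^d\rangle$ acts transitively on $\{\sigma^m : m\in\Z\}$, a set of size $e$; and (b) the subgroup $\langle X, Y\rangle$ has order exactly $de$, equivalently $|\langle X, Y^d\rangle| = e$ and the image of $Y$ in $\langle\tau\rangle$ has order $d$ — the latter is automatic.

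Next I would separate conditions (i) and (ii) out of ``$\langle X, Y^d\rangle$ acts transitively on $\{\sigma^m\}$''. Since $X=[\sigma^a,\theta^c]$ acts on $\{\sigma^m\}$ by $\sigma^i\mapsto\sigma^{a+i}$ (by (\ref{pow-X}), as $\theta$ fixes $\sigma$), $\langle X\rangle$ acts on $\{\sigma^m\}$ through translation by the subgroup of $\Z_e$ generated by $a$. The relation (\ref{XYkap-h}) forces, via Proposition \ref{proj-hom}(i) applied as in the proof of Lemma \ref{h unique}, that $X$ involves no $\phi_s$, and the order relation $X^\gamma=1$ (once we know it) pins $\langle X\rangle$ down to a cyclic group whose translation action on $\{\sigma^m\}$ has image exactly the subgroup of $\Z_e$ of order dividing $\gamma$; condition (i) is precisely the statement that this image is the full unique subgroup of order $\gamma$, i.e.\ that $X$ acts as a $\gamma$-cycle on the corresponding sub-slice $\{\sigma^{em/\gamma}\}$. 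Granting (i), transitivity of $\langle X, Y^d\rangle$ on all of $\{\sigma^m\}$ amounts to $\langle X, Y^d\rangle$ hitting a full set of coset representatives of that order-$\gamma$ subgroup inside $\Z_e$; since $Y^d$ acts on $\{\sigma^m\}$ as an affine map $\sigma^i\mapsto\sigma^{A(d)+t^d i}$, and the multiplier $t^d$ governs the ``rotational'' part, condition (ii) is exactly the residual transitivity statement once (i) is assumed. The bookkeeping here — writing $\langle X, Y^d\rangle\cap\langle\sigma\rangle$ correctly and checking its order is $e$ iff (i) and (ii) hold — is the step I expect to be the main obstacle, because one must be careful that $Y^d$ genuinely lies in the affine group of $\{\sigma^m\}$ (it has no $\tau$) and that the interplay of the $\theta^v$ and $\phi_{t^d}$ components does not spoil the reduction; Proposition \ref{ST-cong} is the tool that keeps the exponents $A(d)$, $S(t,d)$ under control mod each prime $q\mid e$.

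Finally, for the last sentence of the statement, I would show that conditions (i)--(iii) force the orders $X^\gamma=1$ and $Y^{\zeta\delta}=1$, i.e.\ (\ref{XY-orders}). Condition (iii) gives $Y^{\zeta\delta}=1$ outright. For $X$: condition (i) says $\langle X\rangle$ acts on the $\gamma$-element set $\{\sigma^{em/\gamma}\}$ as a single $\gamma$-cycle, so $X$ has order a multiple of $\gamma$; on the other hand $X=[\sigma^a,\theta^c]$ has order equal to $\lcm$ of $\ord(\sigma^a)$ in $\langle\sigma\rangle$ and the order of $\theta^c$, and (\ref{XYkap-h}) combined with $X^\gamma$ acting trivially (which follows from (i)) plus the structure of $\Aut(G)$ from Lemma \ref{Aut-G} forces $\theta^{c\gamma}=\id_G$ and $a\gamma\equiv0\pmod e$, hence $X^\gamma=1$. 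Then, since $X$, $Y$ now satisfy exactly the defining relations (\ref{new-pres}) of $\Gamma$ (orders $\gamma$ and $\zeta\delta$, conjugation by $\kappa_h$ which generates the same subgroup of $\Z_\delta^\times$ as $\kappa$, by choice of orbit representative), there is a surjection $\Gamma\onto\langle X, Y\rangle$; as $|\langle X, Y\rangle|=de=n=|\Gamma|$ by the regularity just established, this surjection is an isomorphism, giving $\langle X, Y\rangle\cong\Gamma$.
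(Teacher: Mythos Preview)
Your proposal shares the paper's central idea --- use the projection $\Hol(G)\to\langle\tau\rangle$ to reduce regularity on $G$ to regularity on the slice $\{\sigma^m\}$ --- but the paper's execution is considerably more direct, and your write-up has a logical wobble in one place.

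For the converse direction, the paper argues in two lines: condition (i), being a \emph{regular} action of $\langle X\rangle$ on a set of size $\gamma$, immediately forces $|X|=\gamma$; together with (iii) this gives (\ref{XY-orders}), so $|\langle X,Y\rangle|\le n$. Then transitivity on $G$ is obtained by hitting an arbitrary $\sigma^i\tau^j$ with $Y^{-j}$ to land in $\{\sigma^m\}$ and invoking (ii). That is all. Your route through a slice-by-slice orbit analysis works too, but your deduction of $X^\gamma=1$ is needlessly roundabout (``order a multiple of $\gamma$'' plus an appeal to (\ref{XYkap-h}) and the structure of $\Aut(G)$): a regular action already gives the exact order. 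Proposition \ref{ST-cong} plays no role in this lemma --- it is only used later, in Propositions \ref{reg-conds} and \ref{Y-ord}, to convert (ii) and (iii) into congruences prime by prime.

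The genuine gap is in your middle paragraph. You write that you will ``separate conditions (i) and (ii) out of $\langle X,Y^d\rangle$ acts transitively on $\{\sigma^m\}$'', invoking ``the order relation $X^\gamma=1$ (once we know it)''. But in the forward direction you do not yet know $X^\gamma=1$, and transitivity of $\langle X,Y^d\rangle$ alone does not yield (i); (i) is a statement about $\langle X\rangle$ by itself. The paper closes this by appealing to Proposition \ref{d-div-gam-del}: once $\langle X,Y\rangle$ is regular, the action is free, so the $\langle X\rangle$-orbit of $1_G$ has size $|X|$; and (implicitly via the isomorphism with $\Gamma$ established there) $|X|=\gamma$, whence the orbit is the unique size-$\gamma$ subgroup $\{\sigma^{em/\gamma}\}$. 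If you want to make the forward direction self-contained, that is the missing ingredient: freeness of the action plus an identification of $|X|$.
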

\begin{proof}
Suppose $ \Gamma^*= \langle X, Y \rangle $ is regular. 
Then (i) follows from Proposition \ref{d-div-gam-del}, so in
particular $ X $ has order $ \gamma $. As $ |\Gamma^*| = n = \gamma
\zeta \delta $, (iii) follows. Since $ Y^d $ does not involve $
\tau $, the subgroup $ \langle X, Y^d \rangle $ of index $ d $ acts on
$ \{ \sigma^m \colon m \in \Z \} $. This action must be regular,
and hence transitive. Thus (ii) holds. 

Conversely, suppose $X$, $Y$ satisfy (i), (ii) and (iii), and let $
\Gamma^*= \langle X, Y \rangle $. By (i), $X$ has order $\gamma$. It
then follows from (iii) that (\ref{XY-orders}) is satisfied.  In
particular, $\Gamma^*$ has order dividing $n$. We claim that
$\Gamma^*$ is transitive on $G$. This will show that $\Gamma^*$ is
regular and isomorphic to $\Gamma$.  Let $\sigma^i \tau^j$ be an
arbitrary element of $G$. As $\tau$ occurs in $Y$ with exponent $1$,
we have $ Y^{-j} \cdot \sigma^i \tau^j = \sigma^m$ for some $m$.  It
follows from (ii) that $\sigma^i \tau^j$ is in the same
$\Gamma^*$-orbit as $1_G$. Hence $\Gamma^*$ is transitive as claimed.
\end{proof}

\begin{definition} \label{def-N}
For $1 \leq h \leq w$, let 
$$ \NN_h  \subset \Z_e^\times \times \Z_e \times \Z_g \times \Z_e \times \Z_g$$  
be the set of quintuples  $(t, a,c,u,v)$
such that the corresponding elements 
$X = [\sigma^a, \theta^c]$, $Y=[\sigma^u \tau, \theta^v \phi_t]$
satisfy (\ref{XYkap-h}) and generate a regular
subgroup isomorphic to $\Gamma$.
\end{definition}

Thus $(t,a,c,u, v) \in \NN_h$ if and only if $X$ and $Y$ satisfy
(\ref{XYkap-h}) and conditions (i), (ii), (iii) of Lemma \ref{XY}. 

By the last assertion of Lemma \ref{h unique},
the number $e'(\Gamma,G)$ of regular subgroups isomorphic to
$\Gamma$ in $\Hol(G)$ is given by 
\begin{equation} \label{sum-Nh}
  e'(\Gamma,G) = \sum_{h=1}^w |\F_h| = \frac{\varphi(\delta)}{\gamma
\varphi(e)w} \sum_{h=1}^w |\NN_h|. 
\end{equation}

\section{Quintuples giving regular subgroups}

Suppose that Hypothesis \ref{HGS-exist} holds.  Our goal in this
section is to give an explicit characterisation of the set $\NN_h$ in
Definition \ref{def-N} by means of congruences at each prime $q \mid
e$.  We will achieve this in Lemma \ref{quin}, after defining some
further notation and proving a number of preliminary propositions.

Recall that $r_q=\ord_q(k)$ for each prime $q \mid e$, and
$\rho_q=\ord_q(\kappa)$ for each $q \mid \epsilon$. Thus we have
$$ r_q = 1 \LRA q \mid z, \qquad r_q \mid \gcd(d,q-1), \qquad 
   \lcm\{r_q : q \mid g \} = d, $$
and similarly for the $\rho_q$. 
\begin{remark} \label{g-odd}
If $q \mid g$, we have $1<r_q \leq q-1$, so $q \geq 3$. Thus
$g$ is always odd.
\end{remark}

We divide the set of primes $q \mid e$
into six subsets (any of which may be empty).

\begin{definition} \label{subsets}
 $$ P = \{\mbox{primes }q \mid \gcd(\gamma,z)  \}; $$
$$  Q = \{\mbox{primes }q \mid \gcd(\zeta \delta,z)  \}; $$
$$  R= \{\mbox{primes }q \mid \gcd(\gamma,g) : \rho_q \neq r_q \}; $$
$$  S =  \{\mbox{primes }q \mid \gcd(\gamma,g) : \rho_q=r_q >2 \}; $$ 
$$  T = \{\mbox{primes }q \mid \gcd(\gamma,g) : \rho_q=r_q=2 \}; $$
$$  U = \{\mbox{primes }q \mid \gcd(\zeta \delta,g)  \}. $$
\end{definition}
For $1 \leq h \leq w$, we further define some subsets of $S$ depending on $h$.
\begin{definition} \label{SSh} 
$$   S_h^+ = \{ q \in S :  \kappa_h \equiv k \pmod{q} \}; $$
$$  S_h^- = \{ q \in S : \kappa_h \equiv k^{-1} \pmod{q} \}. $$
$$   S_h = S_h^+ \cup S_h^-;$$
$$ S'_h = S \backslash S_h. $$ 
\end{definition}
The sets $S$, $T$ and $S_h$ are as already defined in \S\ref{statement}.

\begin{remark} \label{Sh-wd}
Let $q \in S$. Then, as $\rho_q \mid \gcd(\delta,d)$, we have
$\kappa_h^m \equiv \kappa_h \pmod{q}$ for $m \equiv 1
\pmod{\gcd(\delta,d)}$. This shows that the sets $S_h^+$ and $S_h^-$
are independent of the choice of orbit representatives
$\kappa_h$. Also, $q \in T$ if and only if $k \equiv \kappa \equiv -1
\not \equiv 1 \pmod{q}$. In particular, $T=\emptyset$ unless $d$ and
$\delta$ are both even.
\end{remark}

\begin{remark} \label{choose-kappa-bis}
If $w>1$ and the $\kappa_h$ are chosen as in
Remark \ref{choose-kappa}, then $S_h^-=S^+_{w+1-h}$ for each $h$. 
\end{remark}

The set $S$ is the union of the sets $S_h^+$, but in general this
union is not disjoint. 

\begin{proposition} \label{num-h}
Let $q \in S$. Then there are exactly $w/\varphi(r_q)$
values of $h$ with $q \in 
S_h^+$. In particular, $q \in S^+_h$ for a unique $h$ if
and only if $r_q=\gcd(\delta,d)$ or $r_q = \frac{1}{2}
\gcd(\delta,d)$.
\end{proposition}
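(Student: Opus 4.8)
The plan is to count, for a fixed prime $q \in S$, the elements $m \in \K$ with $m \equiv k \pmod q$, and then translate this into a count of $\Delta$-orbits. Recall $\K = \{\kappa^r : r \in \Z_\delta^\times\}$ has $\varphi(\delta)$ elements, on which $\Z_\delta^\times$ acts regularly by exponentiation. Since $q \in S$ we have $q \mid \gcd(\gamma, g)$ with $\rho_q = r_q > 2$, so reduction mod $q$ sends $\kappa$ to an element of order $\rho_q = r_q$ in $\Z_q^\times$; in particular $k$ itself has order $r_q$ mod $q$, and $k$ lies in the cyclic subgroup $\langle \kappa \bmod q \rangle$ of $\Z_q^\times$. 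Hence $k \equiv \kappa^{r_0} \pmod q$ for some $r_0$ with $\gcd(r_0, r_q) = 1$, and $\kappa^r \equiv k \pmod q$ exactly when $r \equiv r_0 \pmod{r_q}$. First I would count how many $r \in \Z_\delta^\times$ satisfy this congruence: since $r_q \mid \delta$ (as $r_q \mid d$ and... actually $r_q \mid \gcd(\delta,d)$, using $\rho_q = r_q$ and $\rho_q \mid \delta$), the fibre of the reduction map $\Z_\delta^\times \to \Z_{r_q}^\times$ over any unit has size $\varphi(\delta)/\varphi(r_q)$. Thus exactly $\varphi(\delta)/\varphi(r_q)$ elements of $\K$ reduce to $k$ mod $q$.

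Next I would pass from elements of $\K$ to $\Delta$-orbits. The set $S_h^+$ is defined by $\kappa_h \equiv k \pmod q$, and by Remark \ref{Sh-wd} the condition $\kappa_h \equiv k \pmod q$ for $q \in S$ depends only on the $\Delta$-orbit of $\kappa_h$ (since $\kappa_h^m \equiv \kappa_h \pmod q$ whenever $m \equiv 1 \pmod{\gcd(\delta,d)}$, using $\rho_q \mid \gcd(\delta,d)$). So the number of $h$ with $q \in S_h^+$ equals the number of $\Delta$-orbits on $\K$ whose elements reduce to $k$ mod $q$. Since $\Delta$ acts on $\K$ without fixed points and $\Delta$ has order $\varphi(\delta)/w$, each orbit has exactly $\varphi(\delta)/w$ elements, all lying in the same reduction class mod $q$. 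Therefore the number of such orbits is
$$ \frac{\varphi(\delta)/\varphi(r_q)}{\varphi(\delta)/w} = \frac{w}{\varphi(r_q)}, $$
which is the first claim.

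For the ``in particular'', note $q \in S_h^+$ for a unique $h$ if and only if $w/\varphi(r_q) = 1$, i.e. $\varphi(r_q) = w = \varphi(\gcd(\delta,d))$. Since $r_q \mid \gcd(\delta,d)$, I would invoke the elementary fact that for a divisor $m \mid N$ one has $\varphi(m) = \varphi(N)$ precisely when $m = N$, or when $N$ is even and $m = N/2$ (the only case where dropping a factor does not change $\varphi$). Applying this with $m = r_q$ and $N = \gcd(\delta,d)$ gives exactly the stated condition: $r_q = \gcd(\delta,d)$ or $r_q = \tfrac12\gcd(\delta,d)$. I do not anticipate a serious obstacle here; the one point requiring care is the divisibility $r_q \mid \gcd(\delta,d)$, which I would justify carefully from $r_q = \rho_q$ together with $r_q \mid d$ and $\rho_q = \ord_q(\kappa) \mid \ord_\epsilon(\kappa) = \delta$ — this is the linchpin that makes both the fibre-counting and the final number-theoretic step go through, so it deserves an explicit sentence.
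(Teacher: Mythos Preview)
Your argument is correct and essentially the same as the paper's: both count via the fibres of the reduction map into $\Z_{r_q}^\times$, the paper working directly with the parametrisation of $\Delta$-orbits by $\Z_{\gcd(\delta,d)}^\times$ while you count elements of $\K$ and then divide by the orbit size $|\Delta|=\varphi(\delta)/w$. One small caveat: your ``elementary fact'' that $\varphi(m)=\varphi(N)$ with $m\mid N$ forces $N=m$ or $N=2m$ needs the hypothesis that $N$ is squarefree (which $\gcd(\delta,d)$ is, and the paper says so explicitly), since for example $\varphi(2)\neq\varphi(4)$.
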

\begin{proof}
We may decompose the group
$\Z_\delta^\times$ as 
$$ \Z_\delta^\times = \Z_{\gcd(\delta,d)}^\times \times
\Z_{\gcd(\delta,e)}^\times \cong \Z_{\gcd(\delta,d)}^\times \times
\Delta, $$
so that the orbits of $\K$ under $\Delta$ correspond to the elements
of $\Z_{\gcd(\delta,d)}^\times$. Explicitly, if the orbit represented
by $\kappa_h$ corresponds to $j \in \Z_{\gcd(\delta,d)}^\times$, then
$\kappa_h \equiv \kappa^J \pmod{\epsilon}$ for some $J \equiv j
  \pmod{\gcd(\delta,d)}$ with $\gcd(J,\delta)=1$. Now let also
  $\kappa_{h'}\equiv \kappa^{J'}$ represent the orbit corresponding to
  $j'$. Then
\begin{eqnarray*}
  \kappa_h \equiv \kappa_{h'} \pmod{q} & \Leftrightarrow & \kappa^J
  \equiv \kappa^{J'} \pmod{q} \\
 & \Leftrightarrow & J \equiv J' \pmod{\rho_q} \\
 & \Leftrightarrow & j \equiv j' \pmod{\rho_q} 
\end{eqnarray*}
since $r_q=\rho_q$ divides $\gcd(\delta,d)$. As the canonical
surjection $\Z_{\gcd(\delta,d)}^\times \to \Z_{r_q}^\times$ has kernel
of cardinality $w/\varphi(r_q)$, it follows that $j \equiv j'
\pmod{r_q}$ for precisely $w/\varphi(r_q)$ of the $w$ possible values
of $j'$. Thus $q \in S_h^+$ for precisely $w/\varphi(r_q)$ values of
$h$. 

In particular, as $w= \varphi(\gcd(\delta,d))$, there is a unique $h$ with 
$q \in S_h^+$ if and only if $\varphi(r_q)=\varphi(\gcd(\delta,d))$. As $r_q
\mid \gcd(\delta,d)$, and $\gcd(\delta,d)$ is squarefree, this occurs
if and only if $\gcd(\delta,d)=r_q$ or $2r_q$.
\end{proof}

We now fix $h \in \{1, \ldots, w\}$. To simplify the notation, we
write $\kappa$ in place of $\kappa_h$ for the rest of this section.

Let 
$$ (t,a,c,u,v) \in \Z_e^\times \times \Z_e \times \Z_g \times \Z_e
\times \Z_g. $$
We wish to determine when this quintuple belongs to $\NN_h$. Equivalently, we
wish to determine when the elements $X=[ \sigma^a, \theta^c]$ and
$Y=[\sigma^u \tau,  \theta^v \phi_{t}]$ of $\Hol(G)$ generate a
subgroup $\Gamma^*=\langle X, Y \rangle$ in $\F_h$.
  
To do so, we translate the conditions in Lemma \ref{XY} into
congruence conditions on $t$, $a$, $c$, $u$, $v$. In the following,
all congruences are modulo the relevant prime $q$ unless otherwise
indicated.

\begin{proposition} \label{X-ord}
Condition (i) of Lemma \ref{XY} is equivalent to the following
conditions at each prime $q \mid e$:
\begin{equation} \label{X-reg-a}
  \mbox{ if } q \mid \gamma \mbox{ then } a \not \equiv 0; 
\end{equation}
\begin{equation} \label{X-ord-a}
  \mbox{ if } q \mid \gcd(\zeta \delta,e) \mbox{ then } a \equiv 0;
\end{equation}
\begin{equation} \label{X-ord-c}
  \mbox{ if } q \mid  \gcd(\zeta \delta,g) \mbox{ then }
  c \equiv 0.  
\end{equation}
\end{proposition}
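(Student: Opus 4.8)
The plan is to translate condition (i) of Lemma \ref{XY} into two arithmetic conditions on the parameters $a$, $c$ of $X = [\sigma^a, \theta^c]$, and then to localise those conditions at each prime $q \mid e$.

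First I would use (\ref{pow-X}): $X^j \cdot \sigma^i = \sigma^{aj+i}$ and $X^j = [\sigma^{aj}, \theta^{cj}]$. Since $\sigma$ has order $e$ and, by Lemma \ref{Aut-G}, $\theta$ has order $g$, this gives $X^j = 1$ if and only if $e \mid aj$ and $g \mid cj$. The set in question is $\Sigma := \{\sigma^{em/\gamma} : m \in \Z\} = \langle \sigma^{e/\gamma}\rangle$, a cyclic subgroup of $G$ of order $\gamma$ containing $1_G$. The $\langle X\rangle$-orbit of $1_G$ is $\langle \sigma^{\gcd(a,e)}\rangle$, so $\langle X\rangle$ preserves $\Sigma$ and acts transitively on it exactly when $\gcd(a,e) = e/\gamma$; and, assuming this, the stabiliser of $1_G$ in $\langle X\rangle$ is $\langle X^\gamma\rangle$, so the action is regular if and only if $X^\gamma = 1$, which (as $e \mid a\gamma$ is then automatic) amounts to $g \mid c\gamma$, i.e.\ to $(g/\gcd(g,\gamma)) \mid c$. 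Thus I reduce condition (i) of Lemma \ref{XY} to the conjunction of $\gcd(a,e) = e/\gamma$ and $(g/\gcd(g,\gamma)) \mid c$.

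Next I would localise. As $n$ is squarefree, its prime divisors are partitioned both into those of $d$ and those of $e$, and into those of $\delta$, those of $\gamma$, and those of $\zeta$; hence each prime $q \mid e$ satisfies exactly one of $q \mid \gamma$ or $q \mid \zeta\delta$, and $q \mid \gcd(\zeta\delta, e)$ is equivalent to ``$q \mid e$ and $q \nmid \gamma$'', while $q \mid \gcd(\zeta\delta, g)$ is equivalent to ``$q \mid g$ and $q \nmid \gamma$''. Since $e$ and $g$ are squarefree, $\gcd(a,e) = e/\gamma$ then says that $a \not\equiv 0 \pmod q$ for each prime $q \mid \gamma$ and $a \equiv 0 \pmod q$ for each prime $q \mid \gcd(\zeta\delta, e)$, which is (\ref{X-reg-a}) together with (\ref{X-ord-a}); and $(g/\gcd(g,\gamma)) \mid c$ says that $c \equiv 0 \pmod q$ for each prime $q \mid \gcd(\zeta\delta, g)$, which is (\ref{X-ord-c}). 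The converse is immediate once one notes that these congruences pin down $a$ and $c$ at all the relevant primes --- there being no condition on $c$ at primes $q \mid \gcd(\gamma, g)$, and none involving $c$ at primes $q \mid z$, as $c \in \Z_g$.

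The steps using the orbit--stabiliser theorem are routine; the part I expect to require the most care is the prime bookkeeping in the last step, namely checking that $\gcd(\zeta\delta, e)$ and $\gcd(\zeta\delta, g)$ are exactly the products of those primes of $e$, respectively $g$, that do not divide $\gamma$, and that together with the primes of $\gamma$ these exhaust the primes $q \mid e$ appearing in the statement.
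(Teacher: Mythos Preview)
Your proof is correct and follows essentially the same approach as the paper's: both reduce condition (i) to the pair of arithmetic conditions $\gcd(a,e)=e/\gamma$ (equivalently, $e\mid aj \Leftrightarrow \gamma\mid j$) and $g\mid c\gamma$, and then localise at primes $q\mid e$ using Hypothesis~\ref{HGS-exist} and squarefreeness. Your version is a little more explicit in invoking the orbit--stabiliser argument and in the prime bookkeeping, but there is no substantive difference.
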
 
\begin{proof} Condition (i) of Lemma \ref{XY} is equivalent to 
$$ X^j \cdot 1_G= 1_G \Leftrightarrow \gamma \mid j, \qquad X^\gamma= 1  $$ 
and so, using (\ref{pow-X}), to 
$$ e \mid aj \Leftrightarrow \gamma \mid j, \qquad c \gamma \equiv 0
\pmod{g}. $$ 
(Recall that $\gamma \mid e$ by Hypothesis \ref{HGS-exist}). These in
turn are equivalent to (\ref{X-reg-a}), (\ref{X-ord-a}) and (\ref{X-ord-c}).
\end{proof}

\begin{proposition} \label{XY-com}
Assume that condition (i) of Lemma \ref{XY} holds. 
Then (\ref{XYkap-h}) is equivalent to the following
conditions for each prime $q$: 
\begin{equation} \label{com-sz}
    \mbox{if } q \mid \gcd(\gamma,z)  \mbox{ then } t
    \equiv \kappa; 
\end{equation}
\begin{eqnarray} \label{com-sg}
    \lefteqn{\mbox{if } q \mid \gcd(\gamma,g) \mbox{
        then either}  } \\ 
       & \mbox{(i)} &  t \equiv \kappa, \quad c \equiv \lambda a,
    \mbox{ or} 
     \nonumber \\  
       & \mbox{(ii)} &  tk \equiv \kappa, \quad c \equiv 0, \nonumber
\end{eqnarray}
where $\lambda = z^{-1}(k-1) \in \Z_g^\times$.
\end{proposition}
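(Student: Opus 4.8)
The plan is to use the explicit formula for the conjugate $YXY^{-1}$ in $\Hol(G)$, computed from the multiplication rule (\ref{hol-mul}) and the description of $\Aut(G)$ in Lemma \ref{Aut-G}, and then to reduce the resulting identity in $\Hol(G)$ modulo each prime $q \mid e$. First I would write $X = [\sigma^a, \theta^c]$ and $Y = [\sigma^u\tau, \theta^v\phi_t]$, compute $Y^{-1}$, and then form $YXY^{-1}$. Since $\theta$ fixes $\sigma$ and $\phi_t(\sigma) = \sigma^t$, the automorphism part of $X$ acts trivially on powers of $\sigma$, so the automorphism part of $YXY^{-1}$ is $\theta^v\phi_t \cdot \theta^c \cdot (\theta^v\phi_t)^{-1} = \theta^{ct}$ using the relation $\phi_s\theta\phi_s^{-1} = \theta^s$ (here $\phi_t\theta^c\phi_t^{-1} = \theta^{ct}$, and the $\theta^v$ factors commute with $\theta^c$). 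The translation part: $YXY^{-1}$ sends $1_G$ to $Y\cdot(X\cdot(Y^{-1}\cdot 1_G))$; tracking this through, $Y^{-1}\cdot 1_G$ is some power of $\sigma$ times $\tau^{-1}$, $X$ shifts the $\sigma$-exponent by $a$, and applying $Y$ multiplies that shift by $t$ (because $Y$'s automorphism part sends $\sigma \mapsto \sigma^t$ on the relevant component) — so the net effect on the $\sigma$-exponent is a shift by $ta$. Hence $YXY^{-1} = [\sigma^{ta}, \theta^{ct}]$ as an element of $\Hol(G)$, modulo checking that no $\tau$ survives (which it doesn't, since the $\tau$ in $Y$ and $Y^{-1}$ cancel).

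On the other side, by (\ref{pow-X}) we have $X^{\kappa_h} = X^\kappa = [\sigma^{a\kappa}, \theta^{c\kappa}]$ (writing $\kappa$ for $\kappa_h$ as in the section's convention). So (\ref{XYkap-h}) holds if and only if $\sigma^{ta} = \sigma^{a\kappa}$ and $\theta^{ct} = \theta^{c\kappa}$, i.e.\ $a(t-\kappa) \equiv 0 \pmod{e}$ and $c(t-\kappa) \equiv 0 \pmod{g}$. (I should double-check the exact shift using Lemma \ref{Y-pow} with $j = \pm 1$ rather than relying on the informal orbit-tracking, but the outcome is the congruence $at \equiv a\kappa \pmod e$ and $ct \equiv c\kappa \pmod g$; I'll need to be slightly careful because the $\sigma^{z}\tau$ appearing in $\theta$ could in principle contribute, but since $X$ involves no $\tau$ and $\theta$ fixes $\sigma$, there is no cross term.) Now I would decompose these congruences prime by prime. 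At a prime $q \mid z$ (so $r_q = 1$, $k \equiv 1$): here $q \nmid g$, so the $c$-condition is vacuous, and for $q \mid \gamma$ condition (i) of Lemma \ref{XY} gives $a \not\equiv 0 \pmod q$ by (\ref{X-reg-a}), so $a(t-\kappa) \equiv 0$ forces $t \equiv \kappa \pmod q$; this is exactly (\ref{com-sz}). At a prime $q \mid g$ with $q \mid \gamma$ (so $q \in \gcd(\gamma,g)$, $r_q = \ord_q(k) > 1$, $q \nmid z$): again $a \not\equiv 0 \pmod q$ by (\ref{X-reg-a}), so $t \equiv \kappa \pmod q$ necessarily. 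Then the $c$-condition $c(t-\kappa) \equiv 0 \pmod q$ is automatic, so I need to extract the rest of (\ref{com-sg}) from somewhere — which tells me the congruence I derived is not yet the whole story.

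The resolution, and the main obstacle, is that (\ref{XYkap-h}) as stated is $YXY^{-1} = X^{\kappa_h}$ where $\kappa_h$ is the \emph{fixed} residue mod $\gamma$, but in the set-up of Definition \ref{def-N} we really want to allow the generator pair to realise any exponent in the $\Delta$-orbit; more precisely, rereading the statement, the ``either/or'' in (\ref{com-sg}) arises because there are two essentially different ways the pair $X,Y$ of the special form (\ref{def-XY}) can satisfy the conjugation relation after the base change built into $\lambda = z^{-1}(k-1)$. I expect the correct derivation is: compute $YXY^{-1}$ not just on $1_G$ but as a full element of $\Hol(G)$ including its $\theta$-component, getting $[\sigma^{at}, \theta^{v + ct - v}] = [\sigma^{at}, \theta^{ct}]$ — wait, more carefully, the $\theta$-exponent of $YXY^{-1}$ should come out as $ct$ plus a correction term involving $a$, because conjugating the \emph{translation} part $\sigma^a$ by $\theta^v\phi_t$ and then re-expressing in the normal form can shift $\theta$: indeed $[\sigma^a,\id]$ conjugated by $[\mathbf{1},\theta]$ is $[\theta(\sigma^a),\id] = [\sigma^a,\id]$, so actually there is no such correction from $\theta$; the correction must come from writing $\theta$ in terms of its action $\theta(\tau) = \sigma^z\tau$, which is invisible on $\sigma$-powers. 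So in fact $YXY^{-1} = [\sigma^{at},\theta^{ct}]$ cleanly, and matching with $[\sigma^{a\kappa},\theta^{c\kappa}]$ gives $at \equiv a\kappa \pmod e$, $ct \equiv c\kappa \pmod g$. At $q \mid \gcd(\gamma,g)$ this gives $t \equiv \kappa \pmod q$ (from $a \not\equiv 0$) and then $c \cdot 0 \equiv 0$, so no constraint on $c$ — contradicting branch (ii) of (\ref{com-sg}), where $tk \equiv \kappa$ and $c \equiv 0$. The discrepancy tells me the conjugation relation the authors actually use must be the one in the \emph{modified} presentation (\ref{new-pres}) where the roles are $YXY^{-1} = X^\kappa$ with $X = s^\zeta$, \emph{but} the special form (\ref{def-XY}) was chosen so that $X$ is built from $\sigma$ alone; there is a subtlety in how $\theta^c$ interacts with the exponent. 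The honest thing: I would recompute $YXY^{-1}$ using Lemma \ref{Y-pow} with $j=1$ and $j=-1$ carefully, getting $Y = [\sigma^{u} \tau, \theta^{v}\phi_t]$ and $Y^{-1} = [\sigma^{u'}\tau^{-1}, \theta^{v'}\phi_{t^{-1}}]$ with $u', v'$ determined, then multiply out $[\sigma^u\tau,\theta^v\phi_t]\cdot[\sigma^a,\theta^c]\cdot[\sigma^{u'}\tau^{-1},\theta^{v'}\phi_{t^{-1}}]$ using (\ref{hol-mul}); the middle factor's $\theta^c$ acts on $\sigma^{u'}\tau^{-1}$ and, crucially, $\theta^c(\tau^{-1}) = \sigma^{-cz}\tau^{-1}$ (from $\theta(\tau) = \sigma^z\tau$ iterated and inverted), which \emph{does} introduce a $\sigma^{-cz}$ term. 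That is the source of the $\lambda a$ versus $0$ dichotomy: expanding, $YXY^{-1} = [\sigma^{ta + (\text{something})cz}, \theta^{ct}]$, and the condition $YXY^{-1} = X^{\kappa_h} = [\sigma^{a\kappa}, \theta^{c\kappa}]$ becomes, mod $q$ for $q \mid \gcd(\gamma,g)$: $ta + (\text{coeff})\, cz \equiv a\kappa$ and $ct \equiv c\kappa$. The first equation, using $\lambda = z^{-1}(k-1)$, rearranges to either $t \equiv \kappa$ with $c \equiv \lambda a$ (when $c \neq 0$), or $c \equiv 0$ with the coefficient identity forcing $tk \equiv \kappa$ — exactly (\ref{com-sg}). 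So the plan concretely is: (1) compute $Y^{-1}$ explicitly via Lemma \ref{Y-pow}; (2) expand $YXY^{-1}$ via (\ref{hol-mul}), being meticulous about $\theta^c(\tau^{-1}) = \sigma^{-cz}\tau^{-1}$; (3) equate with $X^{\kappa_h}$ using (\ref{pow-X}) to get congruences mod $e$ and mod $g$; (4) reduce mod each $q \mid e$, splitting into $q \mid z$ (giving (\ref{com-sz}) via (\ref{X-reg-a})) and $q \mid g$ (giving the dichotomy (\ref{com-sg}), with the $c \equiv 0$ / $c \equiv \lambda a$ split coming from whether or not $q \mid c$), and noting $q \mid \gcd(\zeta\delta, e)$ forces $a \equiv 0$ so those primes impose no condition from the $\sigma$-part beyond what Proposition \ref{X-ord} already records. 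The main obstacle is precisely the bookkeeping in step (2): keeping track of how many times $\theta$ is applied to $\tau^{-1}$ and hence the exact coefficient of $cz$ in the $\sigma$-exponent of $YXY^{-1}$, and then seeing that this coefficient combines with $t$ and $k$ to produce the clean split in (\ref{com-sg}); everything else is routine once that computation is pinned down.
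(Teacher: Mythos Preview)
Your route via $YXY^{-1}$ would work in principle, but there is a concrete error in your sketch: the $\sigma$-exponent of $YXY^{-1}$ has leading term $atk$, not $at$. A careful expansion gives $YXY^{-1} = [\sigma^{atk - ctz},\, \theta^{ct}]$. The extra factor $k$ comes from the left-translation part of $Y$: after the automorphism $\theta^v\phi_t$ has sent $\sigma^a \mapsto \sigma^{ta}$, one still has to move the $\tau$ from the $G$-component $\sigma^u\tau$ of $Y$ past this power of $\sigma$, and $\tau\sigma^{ta}=\sigma^{tak}\tau$. This is not cosmetic: with your formula $[\sigma^{ta + (\text{coeff})\,cz},\, \theta^{ct}]$, setting $c \equiv 0 \pmod q$ would force $t \equiv \kappa$ rather than $tk \equiv \kappa$, so branch (ii) of (\ref{com-sg}) would come out wrong. (Relatedly, $\theta^c(\tau^{-1}) = (\sigma^{cz}\tau)^{-1} = \tau^{-1}\sigma^{-cz} = \sigma^{-czk^{-1}}\tau^{-1}$, not $\sigma^{-cz}\tau^{-1}$ as you wrote.)

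The paper sidesteps all of this by writing (\ref{XYkap-h}) as $YX = X^\kappa Y$ and expanding each side separately via (\ref{hol-mul}); no $Y^{-1}$ and no $\tau^{-1}$ are needed. One finds directly
\[
YX = [\sigma^{u+atk}\tau,\; \theta^{v+ct}\phi_t], \qquad
X^\kappa Y = [\sigma^{a\kappa + u + zc\kappa}\tau,\; \theta^{c\kappa+v}\phi_t],
\]
yielding $a(tk-\kappa) \equiv zc\kappa \pmod e$ and $c(t-\kappa) \equiv 0 \pmod g$. From these, (\ref{com-sz}) and the two branches of (\ref{com-sg}) follow by the case split $t \equiv \kappa$ versus $t \not\equiv \kappa$ at each prime $q \mid \gcd(\gamma,g)$, using $a \not\equiv 0$ from (\ref{X-reg-a}). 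Your four-step plan would reach equivalent congruences once the missing $k$ is restored, but the $YX = X^\kappa Y$ computation is considerably cleaner.
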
 
\begin{proof}
Our assumption means that (\ref{X-reg-a}), (\ref{X-ord-a}),
(\ref{X-ord-c}) hold.

Writing (\ref{XYkap-h}) as $YX=X^\kappa Y$ and expanding using
(\ref{hol-mul}) and (\ref{pow-X}), we obtain
$$ [\sigma^{u+atk} \tau, \theta^{v+ct} \phi_{t}]= 
       [\sigma^{a \kappa+u+zc\kappa} \tau, \theta^{c \kappa+v} \phi_{t}]. $$
This is equivalent to the two congruences 
\begin{equation} \label{cong-a}
    a(tk-\kappa) \equiv zc\kappa \pmod{e},
\end{equation} 
\begin{equation} \label{cong-b}
     c(t-\kappa) \equiv 0 \pmod{g}.
\end{equation}
Note that, although $ \kappa $ is only defined mod $\epsilon$, (\ref{cong-a}) and (\ref{cong-b})
make sense because of (\ref{X-ord-a}) and (\ref{X-ord-c}). We
determine when (\ref{cong-a}) and (\ref{cong-b}) hold mod $ q $ for each prime $q \mid e$.  

If $q \mid \zeta \delta$ then (\ref{cong-a}) and (\ref{cong-b}) are automatically
satisfied because of (\ref{X-ord-a}), (\ref{X-ord-c}). 

If $q \mid \gcd(\gamma,z)$ then (\ref{cong-b}) gives no condition mod $q$
and (\ref{cong-a}) reduces to $a(\kappa-t) \equiv 0$ since $k \equiv 1$. By
(\ref{X-reg-a}), this is the same as (\ref{com-sz}). 

Finally, suppose that $q \mid \gcd(\gamma,g)$. If $ t \equiv
\kappa $ then (\ref{cong-b}) holds for any choice of $ c $, and, substituting
into (\ref{cong-a}), we get (\ref{com-sg})(i). If $ t \not \equiv
\kappa $ then (\ref{cong-b}) gives $ c \equiv 0 $, and then (\ref{cong-a}) reduces to $tk
\equiv \kappa$, again using (\ref{X-reg-a}). This gives (\ref{com-sg})(ii).
\end{proof}

\begin{proposition} \label{reg-conds} 
Suppose that (\ref{XYkap-h}) and condition (i) of Lemma \ref{XY}
hold. Then condition (ii) of Lemma \ref{XY} is equivalent to
\begin{equation} \label{Adi-all}
 A(di) \mbox{ represents all residue classes mod } \gcd(\zeta\delta,e) \mbox{ as } i 
   \mbox{ varies.} 
\end{equation}
Moreover, this occurs if and only if
the following two conditions hold: 
\begin{equation} \label{s-z-ngamm}
  \mbox{ for } q \mid \gcd(\zeta \delta,z) \mbox{ we have }
           t \equiv 1 \mbox{ and } u \not \equiv 0;
\end{equation}
\begin{eqnarray} \label{s-g-ngamm}
  \lefteqn{\mbox{for } q \mid \gcd(\zeta \delta,g) \mbox{ we have either}  } \\
   & \mbox{(i)} &  t \equiv 1, \quad v \not  \equiv 0, \mbox{ or}  \nonumber \\
       & \mbox{(ii)} &  tk \equiv 1, \quad  v \not \equiv \mu u, \nonumber  
\end{eqnarray} 
where $\mu = k^{-1} z^{-1}(k-1) \in \Z_g^\times$.
\end{proposition}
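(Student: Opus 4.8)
The plan is to analyze condition (ii) of Lemma \ref{XY} --- transitivity of $\langle X, Y^d \rangle$ on $\{\sigma^m : m \in \Z\}$ --- by computing explicitly how $Y^d$ acts on this set. By Lemma \ref{Y-pow}, $Y^{di} = [\sigma^{A(di)} \tau^{di}, \theta^{vS(t,di)} \phi_{t^{di}}]$, and since $d \mid di$ we have $\tau^{di} = 1$, so $Y^{di} \cdot \sigma^m = \sigma^{A(di) + t^{di} m}$. Meanwhile $X^j \cdot \sigma^m = \sigma^{aj + m}$ by (\ref{pow-X}). Thus the orbit of $\sigma^0 = 1_G$ under $\langle X, Y^d \rangle$ consists of those $\sigma^m$ with $m$ in the additive subgroup of $\Z_e$ generated by $a$ together with all the values $A(di)$ (the powers $t^{di}$ act as automorphisms of $\Z_e$ fixing this subgroup setwise once we know it contains the relevant pieces). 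Working prime-by-prime via CRT, at a prime $q \mid e$ the contribution of $a$ is nonzero exactly when $q \mid \gamma$ (using (\ref{X-reg-a}) and (\ref{X-ord-a})), so at primes $q \mid \gamma$ transitivity mod $q$ is automatic. At primes $q \mid \gcd(\zeta\delta, e)$ we have $a \equiv 0$, so transitivity mod $q$ requires precisely that $A(di)$ runs over all of $\Z_q$ as $i$ varies --- this is exactly (\ref{Adi-all}).

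Next I would evaluate $A(di) \bmod q$ using the formula (\ref{def-A}), $A(j) = uS(tk,j) + vzkT(k,t,j)$, together with the congruences in Proposition \ref{ST-cong}. One must split according to whether $q \mid z$ (so $k \equiv 1$, $q \in Q$) or $q \mid g$ (so $k \not\equiv 1$, $q \in U$), and within each case according to the value of $t$ (whether $t \equiv 1$, $tk \equiv 1$, or neither) and whether $t$ or $tk$ is $\equiv 1 \bmod q$. In the case $q \mid z$: here $zk \equiv 0$, so $A(di) \equiv u S(tk, di) = u S(t, di) \bmod q$; by Proposition \ref{ST-cong}(i) this is $di \cdot u$ when $t \equiv 1$ and $u(t^{di}-1)/(t-1)$ otherwise. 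The former takes all residues iff $u \not\equiv 0$; the latter is bounded away from generating all of $\Z_q$ (it lies in a proper coset structure --- in fact $t^{di}-1$ takes only finitely many values as $i$ varies since $t$ has some order mod $q$), so we need $t \equiv 1$ and $u \not\equiv 0$, which is (\ref{s-z-ngamm}). In the case $q \mid g$: here I substitute the three-way split of Proposition \ref{ST-cong}(ii) for $T(k,t,di)$ and combine with $S(tk,di)$; when $t \equiv 1$ one gets $A(di) \equiv di(v - \mu u) + (\text{const})$ type behaviour giving the surjectivity criterion $v \not\equiv \mu u$... I would need to be careful here to match exactly the two alternatives in (\ref{s-g-ngamm}), identifying $\mu = k^{-1}z^{-1}(k-1)$ as the coefficient that emerges from $vzk/(k(k-1)) = vz/(k-1)$ versus the $u$-coefficient $z/(tk-1)$ specialised appropriately.

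The key simplification to exploit is that, once we fix $t$ modulo $q$, the quantity $A(di) \bmod q$ is an affine function of $di$ (with coefficients depending on $u$, $v$, $t$, $k$) plus a term proportional to $t^{di}$; and $\{di \bmod q : i \in \Z\} = \Z_q$ when $q \nmid d$, whereas $q \mid d$ is impossible for $q \mid \gcd(\zeta\delta,e)$ since... actually one must check: if $q \mid d$ and $q \mid \zeta\delta$ then $q^2 \mid n$, contradicting squarefreeness, so indeed $q \nmid d$ and $\{di \bmod q\} = \Z_q$. Hence the affine-in-$di$ part is surjective iff its linear coefficient is a unit mod $q$, and the $t^{di}$ part must then be shown not to spoil this (when the coefficient is $0$, i.e. $t \equiv 1$, there is no $t^{di}$ term; when $t \not\equiv 1$ the $t^{di}$ term alone ranges over a proper subset).

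The main obstacle I anticipate is the bookkeeping in the case $q \mid g$ with the three subcases of Proposition \ref{ST-cong}(ii) interacting with the value of $S(tk,di)$: one has to handle $t \equiv 1$ (then $tk \equiv k \not\equiv 1$), $tk \equiv 1$ (then $t \equiv k^{-1} \not\equiv 1$), and the generic case $t \not\equiv 1 \not\equiv tk$, and in each verify whether $A(di)$ is surjective onto $\Z_q$, extracting the precise non-degeneracy conditions on $u$ and $v$. In the generic case one expects $A(di)$ to \emph{fail} to be surjective (it is essentially a bounded expression in $t^{di}$), so that subcase should be ruled out, leaving exactly the two alternatives (i) and (ii) of (\ref{s-g-ngamm}); pinning down that the coefficient of $u$ in alternative (ii) is exactly $\mu$ (so the condition is $v \not\equiv \mu u$ rather than some other linear relation) is where the computation must be done carefully, using $k^d \equiv 1$ and the explicit shape of $S$ and $T$. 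Everything else is a routine translation via the Chinese Remainder Theorem and Proposition \ref{ST-cong}.
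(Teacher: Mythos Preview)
Your approach is essentially the same as the paper's: reduce condition (ii) to the surjectivity of $i \mapsto A(di)$ modulo $\gcd(\zeta\delta,e)$, then analyse $A(di) \bmod q$ prime-by-prime using Proposition~\ref{ST-cong}, splitting according to whether $q \mid z$ or $q \mid g$ and, in the latter case, whether $t \equiv 1$, $tk \equiv 1$, or neither.

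One slip to flag: you have the two subcases for $q \mid g$ swapped. When $t \equiv 1$ (so $tk \equiv k \not\equiv 1$), one has $S(tk,di) = S(k,di) \equiv 0$ since $k^d \equiv 1$, and $T(k,1,di) \equiv di/(k(k-1))$, giving $A(di) \equiv vz\,di/(k-1)$; the surjectivity condition is therefore $v \not\equiv 0$, which is alternative~(i) of (\ref{s-g-ngamm}). It is the case $tk \equiv 1$ (so $t \equiv k^{-1}$) that produces $A(di) \equiv di\bigl(u - vzk/(k-1)\bigr)$ and hence the condition $v \not\equiv \mu u$ with $\mu = k^{-1}z^{-1}(k-1)$, i.e.\ alternative~(ii). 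Once you correct this labelling, the rest of your outline goes through exactly as in the paper.
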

\begin{proof}
By Lemma \ref{Y-pow}, we have $ Y^{di} \cdot 1_G= \sigma^{A(di)}$ for
$i \geq 0$. On the other hand, it follows from (\ref{pow-X}),
(\ref{X-reg-a}) and (\ref{X-ord-a}) that, for any $j \in \Z$, the
orbit of $\sigma^j$ under $\langle X \rangle$ is $
\{\sigma^{j+em/\gamma} \colon m \in \Z \}$. Thus condition (ii) of
Lemma \ref{XY} holds if and only if $A(di)$ represents all residue
classes mod $e/\gamma=\gcd(\zeta \delta,e)$ as $i$ varies. 
This proves the first assertion.

Suppose that (\ref{Adi-all}) holds. Then $A(di)$ represents all residue classes mod $q$ for each 
prime $q \mid \gcd(\zeta \delta,e)$. We analyse this condition,
distinguishing several cases. Recall that 
$$ A(di) = uS(tk,di) + vzkT(k,t,di) $$  

First let $q \mid \gcd(\zeta \delta,z)$. Then $k \equiv 1$, and we have 
$A(di)  \equiv u S(t,di)$. If $t \not \equiv 1$ then, by Proposition
\ref{ST-cong}(i),
$$ A(di) \equiv \frac{u(t^{di}-1)}{t-1}. $$
As there is no $i$ with $t^{di} \equiv 0$, this cannot take all values
mod $q$. If $t \equiv 1$, we have $A(di) \equiv udi$, which
takes all values mod $q$ if and only if $u \not \equiv
0$. This gives (\ref{s-z-ngamm}).

Now let $q \mid \gcd(\zeta \delta,g)$. Thus $k \not \equiv
1$, but of course $k^d \equiv 1$. If $t \not \equiv 1 \not \equiv tk$
then, as $(tk)^d \equiv t^d$, 
it follows from Proposition \ref{ST-cong} that each of the two terms of $A(di)$
is the product of $t^{di}-1$ by a term independent of $i$. Thus, as
above, $A(di)$ cannot represent all residue classes mod $q$.

If $q \mid \gcd(\zeta \delta,g)$ and $t \equiv 1$, we have  
$$ S(tk, di) \equiv S(k,di) \equiv \frac{k^{di}-1}{k-1} \equiv 0, $$
which by Proposition \ref{ST-cong}(ii) gives
$$ A(di) \equiv vzk T(k,1,di) \equiv \frac{vzdi}{k-1}. $$
This represents all residue classes mod $q$ if and only if $v \not
\equiv 0$, giving (\ref{s-g-ngamm})(i). 

If $q \mid \gcd(\zeta \delta,g)$ and $tk \equiv 1$ then
$$ A(di) \equiv udi + \frac{vzkdi}{k(t-1)} \equiv \frac{zk}{k-1} \,
(\mu u-v) di, $$ 
which represents all residue classes mod $q$ if and only if $v \not
\equiv \mu u$. This gives (\ref{s-g-ngamm})(ii).

We have now shown that if (\ref{Adi-all}) holds then (\ref{s-z-ngamm})
or (\ref{s-g-ngamm}) holds for each prime $q \mid \gcd(\zeta
\delta,e)$. Conversely, if (\ref{s-z-ngamm}) or one of the cases of
(\ref{s-g-ngamm}) holds for each such $q$, then it is clear from the
formula for $A(di)$ in each case that $A(di)$ represents all residue
classes mod $q$ as $i$ runs through any complete set of residues mod
$q$. It then follows from the Chinese Remainder Theorem that
(\ref{Adi-all}) holds.
\end{proof}

We extract the following information from the proof of Proposition
\ref{reg-conds}. 

\begin{corollary} \label{reg-conds-bis}
If (\ref{s-z-ngamm}) holds then $A(di) \equiv udi$.

If (\ref{s-g-ngamm})(i) holds then
$$      A(di) \equiv \frac{vzdi}{k-1}. $$

If (\ref{s-g-ngamm})(ii) holds then
$$ A(di) \equiv \frac{zk}{k-1} (\mu u-v)di. $$

Hence if $\Gamma^* \in \F_h$ then, for each $q \mid \gcd(\zeta
\delta,e)$, 
$$ q \mid A(di) \LRA q \mid i.  $$
\end{corollary}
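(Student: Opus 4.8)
The plan is to note that the first three displayed congruences are nothing new: they are precisely the case-by-case identities for $A(di)$ already established in the course of proving Proposition~\ref{reg-conds}. Concretely, one reads off from that proof that when (\ref{s-z-ngamm}) holds (so $q\mid z$, forcing $k\equiv 1\pmod q$, together with $t\equiv 1$) the term $vzkT(k,t,di)$ vanishes mod $q$ while $S(tk,di)\equiv di$, giving $A(di)\equiv u\,di$; that when (\ref{s-g-ngamm})(i) holds (so $q\mid g$ and $t\equiv1$) one has $S(tk,di)\equiv S(k,di)\equiv(k^{di}-1)/(k-1)\equiv 0$ and $T(k,1,di)\equiv di/(k(k-1))$ by Proposition~\ref{ST-cong}(ii), giving $A(di)\equiv vz\,di/(k-1)$; and that when (\ref{s-g-ngamm})(ii) holds (so $q\mid g$ and $tk\equiv1$) the two terms combine to $\big(u+vz/(t-1)\big)di$, which equals $\tfrac{zk}{k-1}(\mu u-v)di$ after substituting $t\equiv k^{-1}\pmod q$ and recalling $\mu=k^{-1}z^{-1}(k-1)$. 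So for the first three assertions I would simply quote these lines of the earlier proof.

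For the final assertion, suppose $\Gamma^*\in\F_h$ with generators $X=[\sigma^a,\theta^c]$, $Y=[\sigma^u\tau,\theta^v\phi_t]$, so that the quintuple $(t,a,c,u,v)$ lies in $\NN_h$ and in particular condition~(ii) of Lemma~\ref{XY} holds. By Proposition~\ref{reg-conds} this forces one of (\ref{s-z-ngamm}), (\ref{s-g-ngamm})(i), (\ref{s-g-ngamm})(ii) to hold at each prime $q\mid\gcd(\zeta\delta,e)$, and the three cases just recorded then give $A(di)\equiv c_q\,di\pmod q$ with $c_q$ independent of $i$, equal to $u$, to $vz/(k-1)$, or to $\tfrac{zk}{k-1}(\mu u-v)$ respectively. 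It remains to see that $c_q\in\Z_q^\times$. Since $e=gz$ is squarefree, the prime $q\mid e$ divides exactly one of $g$ and $z$; in the two cases with $q\mid\gcd(\zeta\delta,g)$ we therefore have $q\nmid z$, and also $k\not\equiv 1\pmod q$ because $r_q>1$, so $z$, $k$ and $k-1$ are all units mod $q$. Together with the inequalities $u\not\equiv0$, $v\not\equiv0$, $v\not\equiv\mu u$ built into (\ref{s-z-ngamm}) and (\ref{s-g-ngamm}), this yields $c_q\not\equiv0\pmod q$ in every case. Finally $q\mid e$ and $\gcd(d,e)=1$ give $q\nmid d$, so $c_qd\in\Z_q^\times$ and hence $q\mid A(di)\Leftrightarrow q\mid di\Leftrightarrow q\mid i$.

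Since the three congruences are extracted verbatim from the preceding proof, the only step calling for any care is the unit check $c_q\in\Z_q^\times$ in the last part, and that reduces to the single observation that squarefreeness of $e$ prevents $q$ from dividing both $z$ and $g$. I expect no genuine obstacle here; this corollary is essentially bookkeeping that packages the computations of Proposition~\ref{reg-conds} into the form needed later.
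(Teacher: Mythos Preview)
Your proposal is correct and follows exactly the paper's approach: the paper simply states that the corollary is extracted from the proof of Proposition~\ref{reg-conds} and gives no further argument, so your explicit unpacking of the three cases and the unit check for $c_q$ (using squarefreeness of $e$ and $\gcd(d,e)=1$) just fills in detail the paper leaves implicit.
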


\begin{proposition} \label{Y-ord}
Suppose that (\ref{XYkap-h}) and conditions (i) and (ii) of Lemma
\ref{XY} hold. Then condition (iii) of Lemma \ref{XY} is
equivalent to the two further conditions  
\begin{equation} \label{tk-kap}
  \mbox{if } q \in S_h^+ \cup T \mbox{ and } t \equiv \kappa k^{-1}
  \equiv 1 \mbox{ then } v  \equiv 0;
\end{equation} 
\begin{equation} \label{t-kap}
  \mbox{if } q \in S_h^- \cup T \mbox{ and } t \equiv \kappa \mbox{
    then } v \equiv \mu u.
\end{equation} 
Moreover, we then have 
\begin{equation} \label{Adi0}
   A(di) \equiv 0 \pmod{q} \mbox{ when } q \mid \gamma 
 \mbox{ and } i \equiv 0 \pmod{\gcd(\delta,e)}. \end{equation}
\end{proposition}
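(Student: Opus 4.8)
The plan is to translate condition (iii) of Lemma~\ref{XY}, namely $Y^{\zeta\delta}=1$, into congruences at each prime $q \mid e$ by applying Lemma~\ref{Y-pow} with $j=\zeta\delta$. Writing $Y^{\zeta\delta} = [\sigma^{A(\zeta\delta)}\tau^{\zeta\delta}, \theta^{vS(t,\zeta\delta)}\phi_{t^{\zeta\delta}}]$, and using $\zeta\delta \equiv 0 \pmod{d}$ (since $d \mid \zeta\delta$ by Hypothesis~\ref{HGS-exist}), so that $\tau^{\zeta\delta}=1$ in $\langle\tau\rangle \cong C_d$, the equation $Y^{\zeta\delta}=1$ reduces to $A(\zeta\delta) \equiv 0 \pmod e$ together with $vS(t,\zeta\delta) \equiv 0 \pmod g$ and $t^{\zeta\delta}\equiv 1 \pmod e$. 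The last of these is automatic: under~(\ref{XYkap-h}) and condition~(i) of Lemma~\ref{XY} one checks at each $q\mid e$ that $t$ (or $tk$) is a power of $\kappa$ or equals $1$, so $t$ has order dividing $\zeta\delta$; I would dispose of this first.

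The substance is the analysis of $A(\zeta\delta)\equiv 0$ and $vS(t,\zeta\delta)\equiv 0$ prime by prime, writing $\zeta\delta = d\cdot(\zeta\delta/d)$ and invoking Proposition~\ref{ST-cong} with $i = \zeta\delta/d$. For $q \mid \gcd(\zeta\delta,e)$ one has $q \mid i$, and Corollary~\ref{reg-conds-bis} already shows $q \mid A(di')$ iff $q \mid i'$, so these primes contribute nothing new. For $q \mid \gcd(\gamma,z) = $ (primes in $P$) one has $k\equiv 1$ and, by~(\ref{com-sz}), $t\equiv\kappa$; since $q\nmid\delta$ here, $\kappa \equiv 1 \pmod q$, so $S(t,\zeta\delta)\equiv \zeta\delta\equiv 0$ and both conditions hold automatically. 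The interesting primes are $q \mid \gcd(\gamma,g)$, where~(\ref{com-sg}) gives two cases. In case~(i), $t\equiv\kappa$; one splits according to whether $\kappa\equiv 1 \pmod q$ (equivalently $q\notin S\cup T$, so nothing to prove) or $\kappa\not\equiv 1$. If $\rho_q = r_q$ one is in $S\cup T$: using Proposition~\ref{ST-cong} to evaluate $A(\zeta\delta)$ — the computation parallels the ``$t\equiv 1$'' branch in Proposition~\ref{reg-conds} but now with $t\equiv\kappa$, $c\equiv\lambda a$ — yields a condition of the shape $(\text{unit})\cdot(v-\mu u)\cdot(\text{something}) \equiv 0$, which forces $v\equiv\mu u$ precisely when $q\in S_h^-\cup T$ (the case $\kappa\equiv k \pmod q$ or $\kappa\equiv -1$); this is~(\ref{t-kap}). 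Symmetrically, case~(ii) of~(\ref{com-sg}), $tk\equiv\kappa$ with $c\equiv 0$, after substituting into $A(\zeta\delta)$ gives a multiple of $v$ that vanishes automatically unless $tk\equiv\kappa$ also forces $t\equiv 1$, i.e.\ $\kappa\equiv k \pmod q$, which is $q\in S_h^+$, or $q\in T$; in that subcase one extracts $v\equiv 0$, which is~(\ref{tk-kap}). One must also check that when $q\in R$ (so $\rho_q\neq r_q$) no extra condition survives, which follows because then $t\not\equiv 1$ and $tk\not\equiv 1$ cannot both fail, so the relevant sums are geometric and their vanishing is automatic. Conversely, assembling~(\ref{tk-kap}) and~(\ref{t-kap}) across all $q$ and using the Chinese Remainder Theorem gives $A(\zeta\delta)\equiv 0 \pmod e$ and $vS(t,\zeta\delta)\equiv 0 \pmod g$, hence $Y^{\zeta\delta}=1$.

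Finally, for~(\ref{Adi0}): when $q\mid\gamma$ and $i\equiv 0\pmod{\gcd(\delta,e)}$, write $di = d\cdot i$ and observe that $\gcd(\delta,e)\mid i$ means $\delta \mid di/(d/\gcd(\delta,d))\cdot\gcd(\delta,d)$, more simply that the relevant exponent appearing in $S(t,di)$ and in the geometric parts of $A(di)$ is a multiple of the order of $t$ (resp.\ $tk$) mod $q$; combined with the just-established relations~(\ref{tk-kap}),~(\ref{t-kap}) governing the ``degenerate'' subcases where those orders are $1$, every term of $A(di)$ is $\equiv 0 \pmod q$. Concretely one runs through the same case division as above with $\zeta\delta$ replaced by $di$, noting that $\gcd(\delta,e)\mid i$ forces the period conditions needed for $S(tk,di)\equiv 0$, $T(k,t,di)\equiv 0$, etc., via Proposition~\ref{ST-cong}.

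The main obstacle I expect is the bookkeeping in the $q\mid\gcd(\gamma,g)$ analysis: one has to carry along the constraints $c\equiv\lambda a$ (case~(i)) or $c\equiv 0$ (case~(ii)) from Proposition~\ref{XY-com}, substitute them correctly into the formula $A(di)=uS(tk,di)+vzkT(k,t,di)$ from Lemma~\ref{def-A}, and then correctly identify \emph{which} degenerate sub-subcase ($t\equiv 1$ versus $tk\equiv 1$) is forced by membership in $S_h^+$ versus $S_h^-$ versus $T$ — getting the $k$ versus $k^{-1}$ bookkeeping straight, and checking that the two conditions~(\ref{tk-kap}),~(\ref{t-kap}) genuinely coincide on $T$ (where $t\equiv\kappa\equiv k\equiv k^{-1}\equiv -1$) rather than conflicting. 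The rest is a routine, if lengthy, application of Propositions~\ref{ST-cong} and~\ref{reg-conds} together with the Chinese Remainder Theorem.
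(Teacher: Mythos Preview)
Your plan is essentially the paper's: reduce $Y^{\zeta\delta}=1$ via Lemma~\ref{Y-pow} to the two congruences $A(\zeta\delta)\equiv 0\pmod e$ and $vS(t,\zeta\delta)\equiv 0\pmod g$, then work prime by prime using Proposition~\ref{ST-cong} together with the constraints already imposed by (\ref{com-sz}), (\ref{com-sg}), (\ref{s-z-ngamm}), (\ref{s-g-ngamm}). The paper organises the $q\mid\gcd(\gamma,g)$ analysis by whether $t\equiv 1$, $tk\equiv 1$, or neither, rather than by the two cases of (\ref{com-sg}); the two organisations are equivalent.

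That said, several details in your sketch are wrong and would derail the computation if followed literally. Most importantly, for \emph{every} $q\mid\gamma$ one has $\kappa\not\equiv 1\pmod q$, by the definition $\gamma=\epsilon/\gcd(\kappa-1,\epsilon)$. So your assertion that $\kappa\equiv 1$ for $q\in P$, and your case-split on ``$\kappa\equiv 1$ vs.\ $\kappa\not\equiv 1$'' in the $\gcd(\gamma,g)$ analysis, are both vacuous or incorrect. For $q\in P$ the right reason the conditions are automatic is that $q\mid z$ kills the $vzkT$ term in $A(\zeta\delta)$, while $tk\equiv\kappa\not\equiv 1$ makes $S(tk,\zeta\delta)$ a genuine geometric sum with numerator $\kappa^{\zeta\delta}-1\equiv 0$. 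You also swap the roles of $S_h^+$ and $S_h^-$: in your case~(i) ($t\equiv\kappa$), the degenerate subcase is $tk\equiv 1$, i.e.\ $\kappa\equiv k^{-1}$, which is $q\in S_h^-\cup T$, not ``$\kappa\equiv k$'' as you wrote; the condition $v\equiv\mu u$ then comes from $A(\zeta\delta)\equiv 0$. In case~(ii) the degenerate subcase $t\equiv 1$ gives $\kappa\equiv k$, i.e.\ $q\in S_h^+\cup T$, and $v\equiv 0$ is most cleanly read off from $vS(t,\zeta\delta)\equiv v\zeta\delta$ with $q\nmid\zeta\delta$. For $q\in R\cup S_h'$ the point is that $t\not\equiv 1$ and $tk\not\equiv 1$ both \emph{hold} (your ``cannot both fail'' says the opposite), whence all the sums are geometric with vanishing numerator. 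Note finally that $c$ does not appear in $A(di)$ at all, so carrying the constraint $c\equiv\lambda a$ into this part of the argument is irrelevant.
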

\begin{proof}
By Hypothesis \ref{HGS-exist}, we have $\zeta \delta=di_0$ where
$i_0=\gcd(\zeta \delta, e)$. It then follows from (\ref{com-sz}),
(\ref{com-sg}), (\ref{s-z-ngamm}) and (\ref{s-g-ngamm}) that $t^{di}
\equiv 1 \pmod{e}$ whenever $i_0 \mid i$.

By Lemma \ref{Y-pow}, condition (iii) of Lemma \ref{XY} is
equivalent to the following two congruences:  
\begin{equation} \label{Y-ord-1} 
    A(\zeta \delta) \equiv 0 \pmod{e};
\end{equation}
\begin{equation} \label{Y-ord-2}
     vS(t,\zeta \delta) \equiv 0 \pmod{g}.
\end{equation}
We will show that these are equivalent to (\ref{tk-kap}) and (\ref{t-kap}). 

We first check that (\ref{Y-ord-1}) and (\ref{Y-ord-2}) give no
conditions for primes $q \mid \gcd(\zeta \delta,e)=i_0$. 
We have $A(di_0) \equiv 0$ by Corollary
\ref{reg-conds-bis}, so (\ref{Y-ord-1}) holds.  If also $q \mid g$,
then by (\ref{s-g-ngamm}) either $t \equiv 1$, so that $S(t, \zeta
\delta)= \zeta \delta \equiv 0$, or $tk \equiv 1$, so that $S(t, \zeta
\delta) \equiv 0$ since $t^{\zeta \delta} \equiv 1$. Hence (\ref{Y-ord-2})
holds.

Next suppose that $q \mid \gcd(\gamma, z)$. Then (\ref{Y-ord-2}) gives
no condition at $q$. As (\ref{Y-ord-1}) is a special
case of (\ref{Adi0}), we just need to verify (\ref{Adi0}) for these
$q$. Now $t \equiv \kappa$ by (\ref{com-sz}), and also $k \equiv 1
\not \equiv \kappa$.  In this case,
$$ A(di) = u S(tk,di) \equiv 0 \mbox{ for all } i, $$
giving (\ref{Adi0}).

Finally, let $q \mid \gcd(\gamma,g)$. Recall that 
$$ A(di) = uS(tk,di) + vzkT(k,t,di). $$ 
If $t \not \equiv 1 \not \equiv tk$ then $S(tk, \zeta \delta) \equiv
T(k, t, \zeta \delta) \equiv S(tk, \zeta \delta) \equiv 0$, so
(\ref{Y-ord-1}), (\ref{Y-ord-2}) hold with no conditions on $u$,
$v$. Similarly, if $i_0 \mid i$ then (\ref{Adi0}) holds. It
remains to consider the special cases $t \equiv 1$ and $tk \equiv 1$.

If $t \equiv 1$, we cannot have $t \equiv \kappa$ since $\kappa
\not \equiv 1$, but
$$  tk \equiv \kappa \LRA k \equiv \kappa \LRA q \in S_h^+ \cup T. $$
In this case (\ref{Y-ord-2}) is equivalent
to $v \equiv 0$, and then (\ref{Y-ord-1}) holds for arbitrary $u$. This
gives (\ref{tk-kap}), and (\ref{Adi0}) also holds.

If $tk \equiv 1$, we cannot have $tk \equiv
\kappa$, but
$$ t \equiv \kappa \LRA  k \equiv \kappa^{-1} \LRA q \in S_h^- \cup T. $$ 
As $t \not \equiv 1$, (\ref{Y-ord-2}) holds for arbitrary $v$, and
(\ref{Y-ord-1}) becomes
$$ u \zeta \delta + zv\ \frac{k\zeta \delta}{k(t-1)} \equiv 0, $$
which simplifies to (\ref{t-kap}) since $t \equiv k^{-1}$. Again,
(\ref{Adi0}) holds.
\end{proof}

\begin{lemma} \label{quin}
A quintuple $(t,a,c,u,v) \in \Z_e^\times \times \Z_e \times \Z_g \times
  \Z_e \times \Z_g$ belongs to $\NN_h$ if and only if, for each prime
  $q \mid e$, its entries satisfy the conditions mod $q$ shown in
  Table \ref{quintuples}, where $\lambda$, $\mu$ are defined in
  Propositions \ref{XY-com}, \ref{reg-conds} respectively. 
\end{lemma}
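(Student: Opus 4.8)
The idea is that essentially all the work has already been done in Propositions \ref{X-ord}, \ref{XY-com}, \ref{reg-conds} and \ref{Y-ord}, so the proof of Lemma \ref{quin} consists in assembling those four results into a single table, organised by the type of the prime $q \mid e$.

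First I would record that the six sets $P$, $Q$, $R$, $S$, $T$, $U$ of Definition \ref{subsets} partition the set of primes dividing $e$. Indeed $n=\gamma\zeta\delta$ and $e=zg$, with both factorisations into pairwise coprime parts (the first because $n$ is squarefree, the second by definition of $z$ and $g$), and $\gamma\mid e$ by Hypothesis \ref{HGS-exist}. Hence each prime $q\mid e$ divides exactly one of $\gamma$ and $\zeta\delta$, and exactly one of $z$ and $g$. The primes dividing $\gcd(\gamma,z)$, $\gcd(\zeta\delta,z)$ and $\gcd(\zeta\delta,g)$ are exactly $P$, $Q$ and $U$; and the primes $q$ dividing $\gcd(\gamma,g)$ fall into $R$, $S$ or $T$ according as $\rho_q\neq r_q$, $\rho_q=r_q>2$, or $\rho_q=r_q=2$. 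So every $q\mid e$ has exactly one type.

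Next I would chain the four propositions. By Definition \ref{def-N}, $(t,a,c,u,v)\in\NN_h$ exactly when $X=[\sigma^a,\theta^c]$ and $Y=[\sigma^u\tau,\theta^v\phi_t]$ satisfy (\ref{XYkap-h}) and conditions (i), (ii), (iii) of Lemma \ref{XY}. Proposition \ref{X-ord} shows (i) is equivalent, unconditionally, to the congruences (\ref{X-reg-a})--(\ref{X-ord-c}); granted (i), Proposition \ref{XY-com} shows (\ref{XYkap-h}) is equivalent to (\ref{com-sz})--(\ref{com-sg}); granted these, Proposition \ref{reg-conds} shows (ii) is equivalent to (\ref{s-z-ngamm})--(\ref{s-g-ngamm}); and granted all of the above, Proposition \ref{Y-ord} shows (iii) is equivalent to (\ref{tk-kap})--(\ref{t-kap}). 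Substituting each biconditional in turn, membership in $\NN_h$ becomes equivalent to the conjunction of all of these congruences. For a prime $q$ of each of the six types it then remains to discard the congruences that impose nothing at $q$ and to simplify the rest; the outcome is exactly Table \ref{quintuples}.

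The one place where care is needed is in combining, for $q\in S$ and $q\in T$, the two alternatives (i), (ii) of (\ref{com-sg}) with the conditions (\ref{tk-kap}), (\ref{t-kap}) coming from the order of $Y$. For $q\in S$ we have $r_q=\rho_q>2$, so $k\not\equiv k^{-1}\pmod q$ and therefore $q$ lies in at most one of $S_h^+$, $S_h^-$; treating the three cases $q\in S_h^+$, $q\in S_h^-$, $q\in S'_h$ separately, one checks which (if either) of (\ref{tk-kap}), (\ref{t-kap}) is activated under alternative (i) of (\ref{com-sg}) and which under alternative (ii), and reads off the resulting condition on $v$ (or on $v$ and $u$). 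For $q\in T$ we have $k\equiv\kappa\equiv-1\pmod q$ by Remark \ref{Sh-wd}, so both (\ref{tk-kap}) and (\ref{t-kap}) apply, each contributing a condition on $v$ in one of the two alternatives. For primes in $P$, $Q$, $R$, $U$ the conditions of Proposition \ref{Y-ord} are vacuous and the tabulation is immediate. Collating the six cases gives Table \ref{quintuples} and completes the proof.
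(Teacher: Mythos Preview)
Your proposal is correct and follows essentially the same approach as the paper: chain Propositions \ref{X-ord}, \ref{XY-com}, \ref{reg-conds}, \ref{Y-ord} (each conditional on the preceding ones) to reduce membership in $\NN_h$ to a conjunction of prime-by-prime congruences, then sort the primes $q\mid e$ into the six disjoint types and read off the surviving conditions. The paper carries out the case analysis line by line for each of $P$, $Q$, $R\cup S_h'$, $S_h^+$, $S_h^-$, $T$, $U$, whereas you give a compressed account that singles out the only delicate step (combining the two alternatives of (\ref{com-sg}) with (\ref{tk-kap}) and (\ref{t-kap}) when $q\in S$ or $T$); the content is the same.
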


\begin{remark}
Before proving Lemma \ref{quin}, we make some comments about how to
read Table \ref{quintuples}. All congruences are modulo the relevant
prime $q$. Entries shown as ``arb.'' may be chosen arbitrarily mod
$q$. Where there are two rows for a particular set of primes,
$(t,a,u,c,v)$ may satisfy the conditions in either row. For example,
if $q \in S_h^+$ then either $t \equiv 1$, $a \not \equiv 0$, $c
\equiv v \equiv 0$ or $t \equiv \kappa$, $a \not \equiv 0$, $c \equiv
\lambda a$. The cells for $c$, $v$ when $q \mid z$ are empty since
$c$, $v \in \Z_g$ so are not defined mod $q$. The final column of
Table \ref{quintuples} will be explained in the proof of Lemma
\ref{count-Nh} below.
\end{remark}

\begin{table}[ht] 
\centerline{ 
\begin{tabular}{|c|c|c|c|c|c|c|} \hline
  Primes $q$ & $t$ & $a$ & $u$ & $c$ & $v$ & Number  \\ \hline 
$ q \in P $ & $ \kappa $ & $ \not \equiv 0 $
  & arb.  & & & $q(q-1)$  \\ \hline  
$ q\in Q $ & $ 1 $ & $0 $ & $ \not \equiv 0$ & & & $q-1$ \\  \hline
 $q \in R \cup S_h'$ & $ \kappa $ & $  \not \equiv 0 $ & arb. 
              & $\lambda a$ & arb. & $2q^2(q-1)$ \\   
 & $\kappa k^{-1}$ & $ \not \equiv 0 $ &
                        arb. & $ 0 $ & arb. & \\ \hline
$q \in S_h^+$   & $ \kappa k^{-1} \equiv 1
      $ & $ \not \equiv 0 $ & arb. & $ 0 $ & $ 0 $ & $q(q^2-1)$ \\  
 & $\kappa $ & $\not \equiv 0 $ & arb. &   $\lambda a$   & arb. &  \\ \hline
$q \in S_h^-$ & $ \kappa $ & $ \not \equiv 0 $
                        & arb.  & $\lambda a$ & $\mu u$ & $q(q^2-1)$ \\  
	 & $\kappa k^{-1} \equiv \kappa^2$ & $\not \equiv 0$ & arb. &
                        $ 0 $ & arb. & \\ \hline  
 $q \in T$ & $\kappa \equiv -1$ & $\not \equiv 0$ 
                   & arb. & $\lambda a$ & $\mu u$ & $2q(q-1$) \\    
 & $\kappa k^{-1} \equiv 1$ & $ \not
                        \equiv0 $ & arb. & $0$  & $ 0 $ &\\ \hline   
$ q \in U$ & $1$ & $0$ & arb. & $ 0
                        $ & $ \not \equiv 0$ & $2q(q-1)$  \\  
	& $ k^{-1}$ & $0$ & arb. & $0$ & $\not \equiv \mu u$ &
                        \\ \hline 
\end{tabular}
}  
\vskip3mm

\caption{Conditions for membership of $\NN_h$.} 
 \label{quintuples}  	
\end{table}

\begin{proof}[Proof of Lemma \ref{quin}]

By Lemma \ref{XY} and Propositions \ref{X-ord}, \ref{XY-com},
\ref{reg-conds} and \ref{Y-ord}, $(t,a,c,u,v) \in \NN_h$ if and only
if the relevant conditions from (\ref{X-reg-a})--(\ref{com-sg}) and
(\ref{s-z-ngamm})--(\ref{t-kap}) hold for each prime $q \mid e$. We pick
out these conditions for each prime.

For $q \in P$, we have $q \mid \gcd(\gamma,z)$, so $a \not
\equiv 0$ by (\ref{X-reg-a}) and $t \equiv \kappa$ by (\ref{com-sz}).

For $q \in Q$, we have $q \mid \gcd(\zeta \delta,z)$, so
$a \equiv 0$ by (\ref{X-ord-a}) and $t \equiv 1$, $u \not \equiv 0$ by
(\ref{s-z-ngamm}). 

If $q \in R$ then $\kappa \not \equiv k^{ \pm 1}$ since $\kappa$
has order $\rho_q$ and $k^{\pm 1}$ has order $r_q \neq \rho_q$. If $q \in
S_h'$ then again $\kappa \not \equiv k^{ \pm 1}$. In both cases, 
$q \mid \gcd(\gamma,g)$. Thus by (\ref{com-sg}) either $t
\equiv \kappa$, $c \equiv \lambda a$ or $t \equiv \kappa k^{-1}$, $c
\equiv 0$. Moreover, $a \not \equiv 0$ by (\ref{X-reg-a}), and there
are no restrictions on $u$ and $v$.

For $q \in S_h^+$, we have $q \mid \gcd(\gamma,g)$ and
$\kappa \equiv k \not \equiv \pm 1$. Again, $a \not \equiv 0$ by
(\ref{X-reg-a}), and by (\ref{com-sg}) either $t \equiv \kappa$, $c
\equiv \lambda a$ or $t\equiv \kappa k^{-1} \equiv 1$, $c \equiv
0$. If $t \equiv 1$ then $v \equiv 0$ by (\ref{tk-kap}),
whereas if $t \equiv \kappa$ there is no restriction on $u$ and $v$.

For $q \in S_h^-$, we have $q \mid \gcd(\gamma,g)$ and $\kappa
\equiv k^{-1} \not \equiv \pm 1$. As before, $a \not \equiv 0$ and
either $t \equiv \kappa$, $c \equiv \lambda a$ or $t\equiv \kappa
k^{-1}$, $c \equiv 0$. If $t \equiv \kappa$ then $\kappa k \equiv 1$,
and by (\ref{t-kap}) we have $v = \mu u$, whereas if $t\equiv \kappa
k^{-1}$ there is no restriction on $u$ and $v$.

For $q \in T$, we have $q \mid \gcd(\gamma,g)$ and $\kappa \equiv
k \equiv -1 \not \equiv 1$. Again $a \not \equiv 0$ and either $t
\equiv \kappa \equiv -1$, $c \equiv \lambda a$ or $t\equiv \kappa
k^{-1} \equiv 1$, $c \equiv 0$. If $t
\equiv 1$ then $v \equiv 0$ by (\ref{tk-kap}), while if $t \equiv -1$
then $v \equiv \mu u$ by (\ref{t-kap}).

For $q \in U$ we have $q \mid \gcd(\zeta \delta,g)$. Then
$a \equiv 0$ and $c \equiv 0$ by (\ref{X-ord-a}) and
(\ref{X-ord-c}). Moreover, by (\ref{s-g-ngamm}), either $t \equiv 1$,
$v \not \equiv 0$ or $t \equiv k^{-1}$, $v \not \equiv \mu u$.
\end{proof}

\section{Counting Hopf-Galois Structures}

In this section, we complete the proof of Theorem \ref{thm-HGS}.

\begin{lemma} \label{count-Nh}
Suppose that Hypothesis \ref{HGS-exist} holds, and let $1 \leq h \leq
w$. Then
$$ |\NN_h| = \varphi(e) 2^{\omega(g)} g \gamma 
          \left(\prod_{q\in T} \frac{1}{q}\right) 
          \left(\prod_{q\in S_h}
              \frac{q+1}{2q}\right). $$
In particular, $\F_h$ is not empty.
\end{lemma}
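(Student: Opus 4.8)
The plan is to compute $|\NN_h|$ by counting, prime by prime, the quintuples $(t,a,c,u,v)$ satisfying the conditions of Table~\ref{quintuples}, and then to multiply the local counts together using the Chinese Remainder Theorem. Since the conditions in Lemma~\ref{quin} are imposed independently at each prime $q \mid e$, and the ambient set $\Z_e^\times \times \Z_e \times \Z_g \times \Z_e \times \Z_g$ factors as a product over $q \mid e$ (with the $c$, $v$ components only contributing at primes $q \mid g$), we will have
$$ |\NN_h| = \prod_{q \mid e} (\text{number of local solutions at } q). $$
So the first step is simply to justify this factorisation, noting that $e$ and $g$ are squarefree and that the six sets $P$, $Q$, $R$, $S$, $T$, $U$ partition the primes dividing $e$, with $S = S_h^+ \cup S_h^- \cup S_h'$ (and $R \cup S_h'$ treated together in the table).

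The second step is to verify the entry in the final column of Table~\ref{quintuples} for each type of prime. For $q \in P$: $t$ is determined, $a$ has $q-1$ choices, $u$ is free ($q$ choices), giving $q(q-1)$; no $c,v$ since $q \mid z$. For $q \in Q$: $t$, $a$ determined, $u \not\equiv 0$ gives $q-1$. For $q \in R \cup S_h'$: two mutually exclusive rows (the values $\kappa$ and $\kappa k^{-1}$ for $t$ are distinct mod $q$ here because $\kappa \not\equiv k^{\pm 1}$), each with $a \not\equiv 0$ ($q-1$ choices), $c$ determined, and $u$, $v$ free ($q^2$ choices each row), total $2q^2(q-1)$. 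For $q \in S_h^+$: two rows; the $t\equiv 1$ row forces $v \equiv 0$ so contributes $(q-1)\cdot q \cdot 1 = q(q-1)$ (for $a$, $u$, $v$), while the $t \equiv \kappa$ row has $u,v$ free contributing $(q-1)q^2$; sum is $q(q-1)(1+q) = q(q^2-1)$. The case $q \in S_h^-$ is symmetric: the $t \equiv \kappa$ row has $v = \mu u$ forced ($q(q-1)$ choices for $a,u$) and the other row is free ($q^2(q-1)$), again $q(q^2-1)$. For $q \in T$: both rows force a relation on $v$, so each contributes $q(q-1)$, total $2q(q-1)$. For $q \in U$: $a \equiv c \equiv 0$, $u$ free, and each of the two $t$-rows restricts $v$ to $q-1$ values, giving $2q(q-1)$.

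The third step is to assemble the product. Writing $|\NN_h| = \prod_{q \mid e} N_q$ and using that $\varphi(e) = \prod_{q \mid e}(q-1)$ while $g = \prod_{q \mid g} q$ and $\gamma = \prod_{q \mid \gamma} q$ (all squarefree), one extracts from each local factor a copy of $(q-1)$; the primes $q \mid g$ (i.e.\ $q \in R \cup S \cup T \cup U$) each contribute an extra factor coming from the $c,v$ degrees of freedom, and the primes $q \in R \cup S_h'$, $S_h^\pm$, $T$, $U$ each carry a factor $2$, accounting for $2^{\omega(g)}$ once one checks that exactly the primes dividing $g$ contribute such a factor — here one must be careful that $P$ and $Q$ (primes dividing $z$) contribute no factor $2$, and that although $S_h^+$ and $S_h^-$ individually look like they give $q^2-1$ rather than $2q^2$, the identity $q^2 - 1 = 2q \cdot \frac{q+1}{2}$ reconciles this with the pattern. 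Comparing the power of $q$ in $N_q$ against $g\gamma$ for each $q$ then pins down the correction factors $\prod_{q \in T} \frac1q$ (primes in $T$ give $2q(q-1)$ instead of the generic $2q^2(q-1)$) and $\prod_{q \in S_h} \frac{q+1}{2q}$ (primes in $S_h$ give $q(q^2-1) = 2q^2(q-1)\cdot\frac{q+1}{2q}$). The main obstacle, and where care is genuinely needed rather than routine arithmetic, is this bookkeeping: correctly matching each of the six (or eight, counting the $S_h$ split) local counts against the target monomial $\varphi(e)2^{\omega(g)}g\gamma$ and confirming that the only deviations from the generic factor are concentrated at $T$ and $S_h$. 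Finally, since every local count $N_q$ is strictly positive, the product is positive, so $\NN_h \neq \emptyset$; by Lemma~\ref{h unique} (or rather the nonemptiness of $\NN_h$ together with Definition~\ref{def-N}) this shows $\F_h \neq \emptyset$, completing the proof.
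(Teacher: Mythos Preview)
Your proposal is correct and follows essentially the same approach as the paper: the Chinese Remainder Theorem reduction to local counts, the case-by-case verification of the final column of Table~\ref{quintuples}, and the multiplicative assembly into the stated formula are all exactly what the paper does (the paper only works out the $S_h^+$ case explicitly and leaves the rest to the reader, whereas you spell them all out). One small slip: the identity you quote as ``$q^2-1 = 2q \cdot \frac{q+1}{2}$'' is not right as written, but you immediately give the correct version $q(q^2-1) = 2q^2(q-1)\cdot\frac{q+1}{2q}$ a few lines later, so this is a typo rather than a gap.
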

\begin{proof}
By Lemma \ref{quin}, membership of $\NN_h$ is determined by conditions
mod $q$ for each prime $q \mid e$ separately, so, by the Chinese
Remainder Theorem, the number of possible quintuples in $\NN_h$ is the
product over all $q$ of the number of possible quintuples mod $q$. 

The final column of Table \ref{quintuples} above shows the number of
possibilities for $(t,a,c,u,v)$ mod $q$ for each prime $q$. (If $q
\mid z$, we ignore $c$, $v$ as they are only defined mod $g$.)  For
example, if $q \in S_h^+$ then either $t \equiv 1$ or $t\ \equiv
\kappa$ (where, as before, $\kappa$ really means $\kappa_h$).  If $t
\equiv 1$, we have $a \not \equiv 0$, $u$ arbitrary, but $c \equiv v
\equiv 0$, giving $(q-1)q$ quintuples mod $q$.  If $t \equiv \kappa$
then again $a \not \equiv 0$, $c \equiv \lambda a$, and now $u$, $v$
are arbitrary. This gives a further $(q-1)q^2$ quintuples.  Thus the
total number of quintuples mod $q$ is
$(q-1)q+(q-1)q^2=q(q^2-1)$, as shown in Table \ref{quintuples}. The
other entries in the final column of Table \ref{quintuples} are
obtained by similar (but usually simpler) calculations, which we leave
to the reader.

Noting that $\prod_{q \mid e} (q-1)= \varphi(e)$, we therefore have 
\begin{eqnarray*}
 |\NN_h| & = & \varphi(e) 
              \left(\prod_{q\in P} q \right)
               \left(\prod_{q\in R \cup S_h'} 2q^2 \right) 
          \left(\prod_{q\in S_h}
              q(q+1) \right)
          \left(\prod_{q\in T \cup U} 2q\right) \\
  & = & \varphi(e) \left(\prod_{q\in P} q \right) 
         \left(\prod_{q\in R \cup S \cup T} 2q^2 \right) 
          \left(\prod_{q\in S_h}
              \frac{q+1}{2q}\right)
          \left(\prod_{q\in T} \frac{1}{q}\right) 
          \left(\prod_{q\in U} 2q \right) \\
  & = & \varphi(e) 
         \left(\prod_{q\in R \cup S \cup T \cup U} 2q \right) 
         \left(\prod_{q\in P \cup R \cup S \cup T } q \right) 
          \left(\prod_{q\in T} \frac{1}{q}\right) 
          \left(\prod_{q\in S_h}
              \frac{q+1}{2q}\right) \\
  & = & \varphi(e) 
         \left(\prod_{q \mid g} 2q \right) 
         \left(\prod_{q\mid \gamma} q \right) 
          \left(\prod_{q\in T} \frac{1}{q}\right) 
          \left(\prod_{q\in S_h}
              \frac{q+1}{2q}\right) \\
  & = & \varphi(e) (2^{\omega(g)} g) \gamma 
          \left(\prod_{q\in T} \frac{1}{q}\right) 
          \left(\prod_{q\in S_h}
              \frac{q+1}{2q}\right). \\
\end{eqnarray*}
\end{proof}

\begin{proof}[Proof of Theorem \ref{thm-HGS}]
Given the two groups $G$, $\Gamma$ of squarefree order $n$, we want to
determine the number $e(\Gamma,G)$ of Hopf-Galois structures of type
$G$ on a Galois extension with Galois group isomorphic to
$\Gamma$. This is related by the formula (\ref{HGS-count-formula}) to
the number $e'(G,\Gamma)$ of regular subgroups in $\Hol(G)$ isomorphic
to $\Gamma$.

If $\gamma \nmid e$ then by Proposition \ref{d-div-gam-del} there are
no such regular subgroups and hence no Hopf-Galois structures.

If $\gamma \mid e$, so Hypothesis \ref{HGS-exist} holds, then by
(\ref{sum-Nh}) and Lemma \ref{count-Nh} we have
$$ e'(G,\Gamma)  = 
  \frac{\varphi(\delta)  2^{\omega(g)}g}{w} 
          \left(\prod_{q\in T} \frac{1}{q}\right)
           \sum_{h=1}^w  \left(\prod_{q\in S_h}
              \frac{q+1}{2q}\right).         $$
Since $|\Aut(G)|=g \varphi(e)$ by Lemma \ref{Aut-G}, and similarly
$|\Aut(\Gamma)|=\gamma \varphi(\epsilon)$, it follows from
(\ref{HGS-count-formula}) that 
\begin{eqnarray*}
    e(\Gamma,G) & = & 
        \frac{|\Aut(\Gamma)|}{|\Aut(G)|} e'(G,\Gamma) \\
    & = & \frac{\gamma \varphi(\epsilon) \varphi(\delta) 
          2^{\omega(g)} g}{g \varphi(e)w} 
          \left(\prod_{q\in T} \frac{1}{q}\right)
           \sum_{h=1}^w  \left(\prod_{q\in S_h}
              \frac{q+1}{2q}\right).         
\end{eqnarray*}
As $\varphi(\epsilon)
\varphi(\delta)=\varphi(n)=\varphi(e)\varphi(d)$, we therefore have
$$     e(\Gamma,G) = 
 \frac{\gamma \varphi(d) 2^{\omega(g)}}{w} 
          \left(\prod_{q\in T} \frac{1}{q}\right)
           \sum_{h=1}^w  \left(\prod_{q\in S_h}
              \frac{q+1}{2q}\right).  $$
This completes the proof of Theorem \ref{thm-HGS}.
\end{proof}

\section{An example where $n$ has $7$ prime factors} \label{example}

We consider some of the groups of order 
$$ n = 2 \cdot 3 \cdot 7 \cdot 43 \cdot 127 \cdot 211 \cdot 337
    = 16\;309\;243\;734, $$ 
a squarefree integer with $7$ prime factors. The numerical
calculations in this section were performed using MAPLE. 

Using a formula of H\"older \cite{holder}, recalled as
\cite[eqn.~(1)]{AB}, we find that there are $272\;736$ isomorphism
classes of groups of order $n$.  We consider just four of these, all
of which have $g=43 \cdot 127 \cdot 211 \cdot 337$. 

We first choose $a_1\ \in \Z_{43}^\times$, $a_2 \in \Z_{127}^\times$, $a_3
\in \Z_{211}^\times$, $a_4 \in \Z_{337}^\times$ all having order
$42$. (For example, we may take $a_1=3$, $a_2=5$, $a_3=26$,
$a_4=21$.)  We then specify our groups $G$ by stipulating that $k$ is
congruent to a certain power of $a_1$ (resp.~$a_2$, $a_3$, $a_4$)
mod $43$ (resp.~$127$, $211$, $377$) as shown in Table
\ref{example-table}. 

\begin{table}[ht] 
\centerline{ 
\begin{tabular}{|c|c|c|c|c|c|c|c|c|c|} \hline
 & $k \bmod 43$ & $k \bmod 127$ & $k \bmod 211$ & $k \bmod 337$
 & $r_{43}$ & $r_{127}$ & $r_{211}$ & $r_{337}$ & $d$ \\ \hline
 $G_1$ & $a_1^{21}$ & $a_2^{14}$  & $a_3^{6}$ & $a_4^{2}$ & 
  $2$ & $3$ & $7$ & $21$ & $42$ \\ \hline
 $G_2$ & $a_1^{21}$ & $a_2^{14}$  & $a_3^{12}$ & $a_4^{2}$ & 
  $2$ & $3$ & $7$ & $21$ & $42$ \\ \hline
 $G_3$ & $a_1$ & $a_2^{2}$ &  $a_3^3$ & $a_4^{6}$ & 
  $42$ & $21$ & $14$ & $7$ & $42$ \\ \hline
 $G_4$ & $a_1^{21}$ & $a_2^{6}$ & $a_3^{6}$ & $a_4^{3}$ & 
  $2$ & $7$ & $7$ & $14$ & $14$ \\ \hline
\end{tabular}
}  
\vskip3mm

\caption{Parameters for some groups of order $n$.} 
 \label{example-table}  	
\end{table}

We also show in Table \ref{example-table} the value of $r_q$ for each
of the primes $q$ dividing $g$. In each case, $d$ is the least common
multiple of these values. For $G_1$, $G_2$, $G_3$ we have $d=42$ so
$z=1$ and $e=g$. For $G_4$, we assume further that $k \equiv 1 \pmod{3}$, so
that $d=14$ and $z=3$ and $e=3g$.

\begin{remark}  \label{non-isom}
No two of our groups $G_i$ are isomorphic, since no two of our choices
of $k$ generate the same cyclic subgroup of
$\Z_e^\times$. Nevertheless, $G_1$ and $G_2$ have the same values of
$d$, $g$, $z$ and $r_q$ for all $q \mid e$, showing that these
parameters are not in general sufficient to determine the isomorphism
class of $G$. Moreover, $d$, $g$ and $z$ are not in general sufficient
to determine the $r_q$, as shown by $G_2$ and $G_3$.
\end{remark}

We now fix $\Gamma=G_1$, without loss of generality taking $\kappa$ to
be the value of $k$ specified for $G_1$. We will use Theorem
\ref{thm-HGS} to determine $e(\Gamma,G_i)$ for $i=1, \ldots, 4$.

\subsection{$e(\Gamma,G_1)$:}  \label{G-is-Gam}

Since $d=\delta$, the group $\Delta$ is trivial and
$w=\varphi(d)=12$. The set $\K$ consists of the $12$ elements
$\kappa$, $\kappa^5$, $\kappa^{11}$, $\kappa^{13}$, $\kappa^{17}$,
$\kappa^{19}$, $\kappa^{23}$, $\kappa^{25}$, $\kappa^{29}$,
$\kappa^{31}$, $\kappa^{37}$, $\kappa^{41}$. We label these as
$\kappa_1, \ldots, \kappa_{12}$ in the order listed. Then the
$\kappa_h$ are chosen as in Remark \ref{choose-kappa}.

We have $R=\emptyset$, $S=\{127, 211, 337\}$, $T=\{43\}$.  
We determine the sets $S_h^+$ by considering each prime $q \in S$. For
$q=127$, we have $r_q=3$ so $k \equiv \kappa_h$ if and only if
$\kappa_h \equiv \kappa^r$ with $r \equiv 1 \pmod{3}$. This occurs for
$h=1$, $4$, $6$,
$8$, $10$, $11$. For $q=211$, we have $r_q=7$ and $k \equiv \kappa_h$
for $h=1$, $9$. For $q=337$, we have $r_q=21$ and $k \equiv \kappa_h$
only for $h=1$. Thus 
$$ S_h^+ = \begin{cases} \{127, 211, 337\} & \mbox{if } h=1, \\
           \{127\} & \mbox{if } h=4, 6, 8, 10, 11, \\
           \{211\} & \mbox{if } h=9,\\
         \emptyset & \mbox{if } h= 2, 3, 5, 7, 12. \end{cases} $$
For each $q \in S$, we have $q \in S_h^+$ for $w/\varphi(r_q)$ values
of $h$, as explained in Proposition \ref{num-h}. Since
$S_h^-=S_{w+1-h}^+$, the sets
$S_h=S_h^+ \cup S_h^-$ are as follows:
$$ S_h = \begin{cases} \{127, 211, 337\} & \mbox{if } h=1, 12 \\
           \{127\} & \mbox{if } h=2, 3, 5, 6, 7, 8, 10, 11, \\
           \{127, 211\} & \mbox{if } h=4, 9. \end{cases} $$

Then the number $e(\Gamma,G_1)=e(\Gamma, \Gamma)$ of Hopf-Galois structures is
\begin{eqnarray*}
e(\Gamma,G_1) 
   & = & \frac{2^{\omega(g)} \varphi(d) \gamma}{w} 
       \left( \prod_{q \in T} \frac{1}{q} \right) \sum_{h=1}^w
       \prod_{q \in S_h} \frac{q+1}{2q} \\
 & = & \frac{2^4 \varphi(42) \cdot 43 \cdot 127 \cdot 211 \cdot
         337}{12} \cdot \frac{1}{43} \cdot \\
 & & \qquad 
    \left(2 \times \frac{128}{254} 
       \times \frac{212}{422} \times \frac{338}{674} + 
      8 \times \frac{128}{254} + 2 \times \frac{128}{254} 
       \times  \frac{212}{422} \right) \\
     & = & 692\;355\;072.
\end{eqnarray*}

\subsection{$e(\Gamma,G_2)$:}

We now have $211 \in S_h$ if and only if $\kappa_h \equiv \kappa^r$
with $r \equiv \pm 2 \pmod{7}$, which occurs for $h=2$, $6$, $7$,
$11$. Thus 
$$ S_h = \begin{cases} \{127, 337\} & \mbox{if } h=1, 12 \\ \{127,
  211\} & \mbox{if } h=2, 6, 7, 11, \\ \{127\} & \mbox{if } h=3, 4, 5, 8, 9,
  10. \end{cases} $$ 
The sets $S_h$ in this case are different from those in \S8.1, even
though the $r_q$ have not changed.  The number of Hopf-Galois
structures is now
\begin{eqnarray*}
e(\Gamma,G_2) 
 & = & \frac{2^4 \varphi(42) \cdot 43 \cdot 127 \cdot 211 \cdot
         337}{12} \cdot \frac{1}{43} \cdot \\
 & &  
    \left(2 \times \frac{128}{254} \times \frac{338}{674} + 
    4 \times \frac{128}{254} \times \frac{212}{422} + 
    6 \times \frac{128}{254} \right) \\
     & = & 656\;228\;352.
\end{eqnarray*}

\subsection{$e(\Gamma,G_3)$:}
This time there are no primes $q \mid e$ with $r_q = \rho_q$, so 
$S=T=\emptyset$ and $R=\{43, 127, 211, 337\}$. 
The number of Hopf-Galois structures is now
\begin{eqnarray*}
e(\Gamma,G_3) 
 & = & \frac{2^4 \varphi(42) \cdot 43 \cdot 127 \cdot 211 \cdot
         337}{12}  \cdot \sum_{h=1}^{12} 1 \\
     & = & 74\;556\;542\;784.
\end{eqnarray*}

\subsection{$e(\Gamma,G_4)$:}

In the previous cases, we had $w=|\K|$, so that the orbits of $\Delta$
on $\K$ were singletons. In our final case, this does not hold.
Indeed, we have $w=\varphi(\gcd(14,42))=6$, and a system of orbit
representatives of $\K$ under $\Delta=\{1,29\}$, chosen as in Remark
\ref{choose-kappa}, is $\kappa_1=\kappa$, $\kappa_2=\kappa^5$,
$\kappa_3=\kappa^{11}$, $\kappa_4=\kappa^{31}$,
$\kappa_5=\kappa^{37}$, $\kappa_6=\kappa^{41}$.

We find $R=\{127, 337\}$, $S=\{211\}$, $T=\{43\}$. As $k \equiv \kappa
\pmod{211}$ and $r_{221}=7$, we have $211 \in S_h^+$ only for $h=1$
and $211 \in S_h^-$ only for $h=6$.  The number of Hopf-Galois
structures is now
\begin{eqnarray*}
e(\Gamma,G_4) 
 & = & \frac{2^4 \varphi(14) \cdot 43 \cdot 127 \cdot 211 \cdot
         337}{6} \cdot \frac{1}{43} \cdot \\
 &  & \qquad \left( 2 \times \frac{212}{422} + 4 \times 1 \right) \\
     & = & 723\;131\;904.
\end{eqnarray*}

\section{Special Cases}  \label{special}

In this section, we give some corollaries of our main result Theorem 
\ref{thm-HGS}. 

\subsection{When $\Gamma$ or $G$ is cyclic or dihedral}

We begin by reproving \cite[Theorem 1]{AB}

\begin{corollary} \label{Gam-cyc}
Let $G=G(d,e,k)$ be an arbitrary group of squarefree order $n$. Then 
a cyclic extension of degree $n$ admits precisely 
$2^{\omega(g)} \varphi(d)$. Hopf-Galois structures of type $G$, where
$g=e/\gcd(k-1,e)$.  
\end{corollary}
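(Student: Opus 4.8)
The plan is to deduce Corollary~\ref{Gam-cyc} from Theorem~\ref{thm-HGS} by specialising to the case where the Galois group $\Gamma$ is cyclic of order $n$. In the notation of Lemma~\ref{sf-class}, taking $\Gamma$ cyclic means $\delta=1$, $\epsilon=n$ and $\kappa=1$. First I would record the immediate consequences of these choices: from \eqref{def-zeta} we get $\zeta=\gcd(\kappa-1,\epsilon)=\gcd(0,n)=n$ and $\gamma=\epsilon/\zeta=1$, while $\rho_q=\ord_q(\kappa)=\ord_q(1)=1$ for every prime $q\mid\epsilon$.

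Next I would check that Hypothesis~\ref{HGS-exist} is automatically satisfied: since $\gamma=1$, certainly $\gamma\mid e$, so the first case of the formula in Theorem~\ref{thm-HGS} applies and $e(\Gamma,G)$ is nonzero. I then need to evaluate each ingredient of that formula. Since $\delta=1$, we have $\gcd(\delta,d)=1$, hence $w=\varphi(1)=1$, and there is a single orbit representative $\kappa_1=\kappa=1$. For the prime sets: $S$ and $T$ are defined as sets of primes $q\mid\gcd(\gamma,g)$ with certain conditions on $\rho_q=r_q$; but $\gamma=1$, so $\gcd(\gamma,g)=1$ and therefore $S=T=\emptyset$. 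Consequently $S_h=S_1=\emptyset$ as well, so the product $\prod_{q\in T}\frac1q$ is an empty product equal to $1$, and the sum $\sum_{h=1}^w\prod_{q\in S_h}\frac{q+1}{2q}$ reduces to the single term $\prod_{q\in S_1}\frac{q+1}{2q}=1$.

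Substituting all of this into the formula of Theorem~\ref{thm-HGS} gives
$$ e(\Gamma,G) = \frac{2^{\omega(g)}\varphi(d)\cdot 1}{1}\cdot 1\cdot 1 = 2^{\omega(g)}\varphi(d), $$
which is exactly the claimed count, with $g=e/z=e/\gcd(k-1,e)$ as in \eqref{def-zg}. The only remaining point is a sanity check on the definition of $g$: the statement of the corollary writes $g=e/\gcd(k-1,e)$, matching \eqref{def-zg}, so nothing further is needed. I do not expect any genuine obstacle here; the entire content is a careful bookkeeping exercise verifying that the cyclic choice of $\Gamma$ collapses $w$, $\gamma$, and the sets $S$, $T$, $S_h$ to their trivial values. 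If I wanted to be slightly more self-contained, I could also remark that this recovers \cite[Theorem~1]{AB}, but that is not required for the proof.
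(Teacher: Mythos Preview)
Your proposal is correct and follows essentially the same approach as the paper: specialising Theorem~\ref{thm-HGS} to the cyclic case $\delta=1$, $\gamma=1$, noting that $w=1$ and $S=T=\emptyset$, and reading off $e(\Gamma,G)=2^{\omega(g)}\varphi(d)$. The paper's proof is terser but identical in substance.
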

\begin{proof}
For $\Gamma$ cyclic, we have $\gamma=\delta=1$, so $w=1$ and the sets
$R$, $S$, $T$ are empty. Then Theorem \ref{thm-HGS} gives
$$ e(\Gamma,G) = 2^{\omega(g)} \varphi(d). $$
\end{proof}

\begin{corollary} \label{G-cyc}
Let $\Gamma=G(\delta,\epsilon,\kappa)$ be an arbitrary group of
squarefree order
$n$. A Galois extension with group $\Gamma$ admits precisely $\gamma=
\epsilon/\gcd(\kappa-1,\epsilon)$ Hopf-Galois structures of cyclic
type.
\end{corollary}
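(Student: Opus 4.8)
The plan is to apply Theorem \ref{thm-HGS} with the roles reversed: here we are told the Galois group is $\Gamma = G(\delta,\epsilon,\kappa)$ and we want Hopf-Galois structures of cyclic type, so we must take the ``type'' group $G$ to be the cyclic group $C_n$. First I would record what the parameters of $G = C_n$ are in the notation of Lemma \ref{sf-class}: writing $C_n = G(d,e,k)$ we have $d=1$, $e=n$, $k=1$, and hence $z = \gcd(k-1,e) = n$, $g = e/z = 1$, and $r_q = 1$ for every prime $q \mid e$. In particular $\omega(g) = 0$ and $\varphi(d) = 1$.

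Next I would feed these values into the formula of Theorem \ref{thm-HGS}. Since $\gamma \mid e = n$ always holds, we are in the first case of the theorem. Because $g = 1$, the sets $S$ and $T$ (which consist of primes dividing $\gcd(\gamma,g) = \gcd(\gamma,1) = 1$) are both empty; likewise each $S_h$ is empty. Thus the product $\prod_{q \in T} 1/q$ is the empty product $1$, and each $\prod_{q \in S_h} \frac{q+1}{2q}$ is also $1$, so the sum $\sum_{h=1}^w \prod_{q \in S_h}\frac{q+1}{2q}$ equals $w$. The formula therefore collapses to
$$ e(\Gamma,G) = \frac{2^{0}\cdot 1 \cdot \gamma}{w}\cdot 1 \cdot w = \gamma, $$
which is exactly $\epsilon/\gcd(\kappa-1,\epsilon)$ as claimed.

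The only genuine point requiring a word of care — and the one I would present as the ``main step'' rather than an obstacle — is that $\gamma$, $\zeta$, $\kappa$ and hence $w$ are defined in terms of the \emph{fixed} group $\Gamma$, and do not depend on $G$; so substituting $G = C_n$ changes only the $G$-side quantities $d,e,g,z,r_q$ and the mixed sets $S,T,S_h$, all of which degenerate as above. One should also note that $w = \varphi(\gcd(\delta,d)) = \varphi(\gcd(\delta,1)) = \varphi(1) = 1$ when $G$ is cyclic, so in fact the sum has a single term and the cancellation of $w$ is immediate; either way the answer is $\gamma$. There is essentially no obstacle here: this corollary is a direct specialisation of Theorem \ref{thm-HGS}, dual to Corollary \ref{Gam-cyc}, and the proof is just the bookkeeping of setting $d=1$, $g=1$ in the main formula.
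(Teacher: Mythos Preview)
Your proposal is correct and follows essentially the same route as the paper's own proof: take $G$ cyclic so that $d=g=1$, note that $w=1$ and the sets $S$, $T$ (and hence each $S_h$) are empty, and read off $e(\Gamma,G)=\gamma$ from Theorem \ref{thm-HGS}. The paper's argument is just a terser version of what you wrote.
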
 
\begin{proof}
For $G$ cyclic, we have $d=g=1$ so again $w=1$ and $R$, $S$, $T$ are
empty. Thus $e(\Gamma,G)= \gamma$.
\end{proof}

\begin{corollary} \label{Gam-dih}
Let $m$ be an odd squarefree integer, let $L/K$ be a Galois
extension of degree $n=2m$ with $\Gal(L/K) \cong D_{2m}$, the dihedral group
of order $n$. Let $G=G(d,e,k)$ be
an arbitrary group of 
order $n$. Then, with $g=e/\gcd(k-1,e)$, the number of Hopf-Galois
structures of type $G$ on $L/K$ is 
$$ e(D_{2m},G) = \begin{cases} m & \mbox{ if } d=1, \\
    \displaystyle{\frac{2^{\omega(g)}m}{g}} & \mbox{ if } d=2, \\
     0 & \mbox{ otherwise.} \end{cases} $$
\end{corollary}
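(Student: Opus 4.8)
The plan is to specialise Theorem \ref{thm-HGS} to the case $\Gamma = D_{2m}$, which in the notation of Lemma \ref{sf-class} is the group $G(\delta,\epsilon,\kappa)$ with $\delta = 2$, $\epsilon = m$, and $\kappa \equiv -1 \pmod{m}$. First I would compute the auxiliary quantities attached to $\Gamma$: since $m$ is odd, $\gcd(\kappa-1,\epsilon) = \gcd(-2,m) = 1$, so $\zeta = 1$ and $\gamma = \epsilon/\zeta = m$; also $\rho_q = \ord_q(\kappa) = 2$ for every prime $q \mid m$, because $\kappa \equiv -1$ and $q$ is odd. Next I would split according to the parameter $d$ of $G = G(d,e,k)$, using the dichotomy provided by Theorem \ref{thm-HGS}: the structure count is zero unless $\gamma \mid e$, i.e.\ unless $m \mid e$. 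Since $de = n = 2m$ and $e \mid n$, the condition $m \mid e$ forces $e = m$ (so $d = 2$) or $e = 2m$ (so $d = 1$); in every other case $\gamma \nmid e$ and $e(D_{2m},G) = 0$, giving the third line of the formula.

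For the case $d = 1$, the group $G$ is cyclic, and Corollary \ref{G-cyc} (applied with the roles of the groups as there) or a direct substitution into Theorem \ref{thm-HGS} gives $e(D_{2m},G) = \gamma = m$: here $G$ cyclic means $g = 1$, $\varphi(d) = \varphi(1) = 1$, $w = \varphi(\gcd(\delta,d)) = \varphi(1) = 1$, and the sets $S$, $T$ are empty, so the formula collapses to $\gamma \varphi(d) 2^{\omega(g)}/w = m$. This yields the first line. The bulk of the work is the case $d = 2$, where $e = m$, so $z = \gcd(k-1,m)$, $g = m/z$, and since $g \mid m$ we have $\omega(g)$ well-defined; note also $\gcd(\delta,d) = \gcd(2,2) = 2$, so $w = \varphi(2) = 1$ and there is a single orbit representative $\kappa_1 = \kappa$. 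I would then identify the sets $S$ and $T$ from Definition \ref{subsets}: a prime $q \mid \gcd(\gamma,g) = g$ lies in $S$ iff $\rho_q = r_q > 2$ and in $T$ iff $\rho_q = r_q = 2$. Since $\rho_q = 2$ for all $q \mid m \supseteq g$, we get $S = \emptyset$ and $T = \{q \mid g : r_q = 2\}$. But $d = 2$ forces $r_q = \ord_q(k) \mid d = 2$, and $r_q = 1$ would mean $q \mid z$, contradicting $q \mid g$; hence $r_q = 2$ for every $q \mid g$, so in fact $T = \{q : q \mid g\}$ and $\prod_{q \in T} \frac1q = 1/g$.

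With $S = \emptyset$ (so $S_h = \emptyset$ and the inner product $\prod_{q \in S_h}\frac{q+1}{2q}$ is the empty product $1$), $w = 1$, $\varphi(d) = \varphi(2) = 1$, $\gamma = m$, and $\prod_{q \in T}\frac1q = 1/g$, Theorem \ref{thm-HGS} gives
\[
 e(D_{2m},G) = \frac{2^{\omega(g)} \cdot 1 \cdot m}{1} \cdot \frac{1}{g} \cdot 1 = \frac{2^{\omega(g)} m}{g},
\]
which is the second line. The main obstacle — really the only non-formal point — is the verification in the $d = 2$ case that every prime $q \mid g$ satisfies $r_q = 2$ and hence lands in $T$ rather than being left out; this rests on the observations $r_q \mid d$ and ``$r_q = 1 \iff q \mid z$'' recorded at the start of \S6, together with $\gcd(g,z) = 1$. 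Once that is in place, and once one checks that the vacuous $S = \emptyset$ correctly makes the $h$-sum equal to $w = 1$, the three cases assemble into the stated piecewise formula. I should also double-check that $D_{2m}$ genuinely has the presentation $G(2,m,-1)$ and that $\ord_m(-1) = 2$ (which needs $m > 2$, i.e.\ $m \neq 1$; the case $m = 1$ is degenerate and can be treated separately or excluded), so that Lemma \ref{sf-class} applies and the notation of \S\ref{statement} is legitimate.
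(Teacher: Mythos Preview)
Your proof is correct and follows essentially the same approach as the paper's own proof: specialise Theorem \ref{thm-HGS} with $\delta=2$, $\gamma=m$, $\zeta=1$, $\rho_q=2$, observe that $\gamma\mid e$ forces $d\in\{1,2\}$, invoke Corollary \ref{G-cyc} (or direct substitution) for $d=1$, and for $d=2$ verify that $r_q=2$ for every $q\mid g$ so that $T=\{q\mid g\}$ and $S=\emptyset$. Your write-up is somewhat more explicit in justifying each step (e.g.\ why $r_q=2$ from $r_q\mid d$ and $r_q\neq 1$), but the underlying argument is identical.
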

\begin{proof}
Taking $\Gamma=D_{2m}$, we have $\delta=2$, $\gamma=m$, $\zeta=1$,
$w=1$ and $\rho_q=2$ for all primes $q \mid m$. If $d=1$ the result
follows from Corollary \ref{G-cyc}. If $d>2$ then $\gamma \nmid e$, so
$e(D_{2m},G)=0$. If $d=2$, we have $r_q=2$ for each prime $q \mid g$,
so $T=\{ q \mid g\}$ and $R$, $S$ are empty. The stated formula
follows from Theorem \ref{thm-HGS}.
\end{proof}

\begin{corollary} \label{G-dih}
Let $m$ be an
odd squarefree integer, and let $\Gamma=G(\delta,\epsilon,\kappa)$ be an
arbitrary group of order $n=2m$. Then the number of Hopf-Galois
structures of dihedral type on a Galois extension with Galois group $\Gamma$ is
$$ e(\Gamma,D_{2m}) = 2^{\omega(m)} \gamma \left( \prod_{q \mid \gamma, \; 
  \rho_q=2} \frac{1}{q} \right). $$
\end{corollary}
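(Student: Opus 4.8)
The plan is to derive this from Theorem \ref{thm-HGS} applied with $G=D_{2m}$ (so that the \emph{type} is dihedral while the Galois group is the arbitrary $\Gamma$); the only thing to watch is not to interchange the roles of the two groups. First I would record the parameters $(d,e,k)$ of $D_{2m}=G(d,e,k)$: we have $d=2$, $e=m$ and $k\equiv-1\pmod m$, so that $z=\gcd(k-1,e)=\gcd(-2,m)=1$ because $m$ is odd, $g=e/z=m$, and $r_q=\ord_q(-1)=2$ for every prime $q\mid m$.

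Next I would verify Hypothesis \ref{HGS-exist}, i.e.\ $\gamma\mid e=m$, so that we are in the nonzero case of Theorem \ref{thm-HGS}. Since $\gamma$ divides $\epsilon$, hence $n=2m$, and since $\gamma$ is odd (this is Remark \ref{g-odd} applied to $\Gamma$, as every prime $q\mid\gamma$ satisfies $1<\rho_q\le q-1$), it follows that $\gamma\mid m$; so the hypothesis holds automatically. Then I would compute the combinatorial data in the formula: $w=\varphi(\gcd(\delta,d))=\varphi(\gcd(\delta,2))=1$ since $\gcd(\delta,2)\in\{1,2\}$, so there is a single orbit representative $\kappa_1=\kappa$. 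For a prime $q\mid\gcd(\gamma,g)=\gcd(\gamma,m)=\gamma$ we have $r_q=2$, so the condition $\rho_q=r_q$ forces $\rho_q=2$ and then $\rho_q=r_q=2$, never $>2$. Hence $S=\emptyset$ and $T=\{q\mid\gamma:\rho_q=2\}$; in particular $S_1=S=\emptyset$, so the sum over $h$ reduces to the single empty product, which equals $1$.

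Finally I would substitute everything into Theorem \ref{thm-HGS}: with $\omega(g)=\omega(m)$, $\varphi(d)=\varphi(2)=1$, $w=1$, and the $S_h$-factor equal to $1$, one obtains
$$ e(\Gamma,D_{2m}) \;=\; 2^{\omega(m)}\gamma\prod_{q\in T}\frac1q \;=\; 2^{\omega(m)}\gamma\prod_{\substack{q\mid\gamma\\ \rho_q=2}}\frac1q, $$
as claimed. The argument is essentially a bookkeeping exercise once Theorem \ref{thm-HGS} is available; the one step that is not purely mechanical is the observation that $\gamma$ is odd, which is precisely what guarantees $\gamma\mid m$ and hence that the count is never forced to be $0$ (in contrast to Corollary \ref{Gam-dih}, where the dihedral group is the Galois group).
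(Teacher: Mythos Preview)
Your proof is correct and follows essentially the same route as the paper: set $d=2$, $e=g=m$, $z=1$, observe that $\gamma$ is odd so $\gamma\mid m$, note $w=1$ and $r_q=2$ for all $q\mid m$ so $S=\emptyset$ and $T=\{q\mid\gamma:\rho_q=2\}$, then read off the formula from Theorem~\ref{thm-HGS}. Your write-up is in fact slightly more detailed than the paper's (e.g.\ the explicit computation of $z$ and the justification that $\gcd(\gamma,g)=\gamma$), but the argument is the same.
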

\begin{proof}
We let $G=D_{2m}$, so $d=2$, $g=m$, $z=1$ and $w=1$. Since $2 \nmid
\gamma$ (see Remark \ref{g-odd}) we necessarily have $\gamma \mid g$. 
Then $r_q=2$ for all $q \mid m$, so $R$ and $S$ are empty and $T=\{ q
\mid \gamma : \rho_q=2\}$. The stated formula then follows from
Theorem \ref{thm-HGS}.
\end{proof}

\subsection{When $n$ is the product of two primes}

We next recover the results of \cite{pq} which count Hopf-Galois
structures on Galois extensions of degree $n=pq$, where $p>q$ are
primes. We assume that $p \equiv 1\pmod{q}$ since otherwise any group
of order $pq$ is cyclic. Thus we have two groups of order $n$, the
cyclic group $C_n$ (for which $g=d=1$ and $z=pq$) and the nonabelian
group $C_p \rtimes C_q$ (for which $g=p$, $d=q$, $z=1$).

\begin{corollary} 
Let $n=pq$, where $p$, $q$ are primes with $p \equiv 1 \pmod{q}$. Let
$\Gamma$, $G$ be groups of order $pq$. Then the number of Hopf-Galois
structures of type $G$ on a Galois extension with Galois group
$\Gamma$ is as given in Table \ref{pq-table}.
\end{corollary}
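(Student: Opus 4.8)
The plan is to apply Theorem~\ref{thm-HGS} to each of the four ordered pairs $(\Gamma,G)$ with $\Gamma,G\in\{C_n,\,C_p\rtimes C_q\}$, tabulating the outcome. First I would record the invariants attached to each group: for $C_n$ we have $d=1$, $g=1$, $z=pq$, $\gamma=1$ (and similarly for the capital-letter versions), while for $C_p\rtimes C_q$ we have $d=q$, $g=p$, $z=1$, $\gamma=p$; in the latter case the single prime $q'$ dividing $g$ is $q'=p$, with $r_p=q$ (and correspondingly $\rho_p=q$ for $\Gamma$ nonabelian). Since $\gcd(\delta,d)$ is either $1$ or $q$ in all cases, and when it is $q$ the group $\Delta$ still has index $w=\varphi(q)=q-1$, I would note $w=1$ precisely when at least one of $\Gamma,G$ is cyclic, and $w=q-1$ when both are nonabelian.

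Next I would go through the four cases. When $\Gamma=G=C_n$, everything is trivial and $e=1$ (a single Hopf-Galois structure of cyclic type on a cyclic extension), directly from Corollary~\ref{Gam-cyc} or \ref{G-cyc}. When $\Gamma=C_n$ and $G=C_p\rtimes C_q$, Corollary~\ref{Gam-cyc} gives $e=2^{\omega(g)}\varphi(d)=2\cdot(q-1)$, since $\omega(p)=1$ and $\varphi(q)=q-1$; alternatively $\gamma=1\mid e$ and $R=S=T=\emptyset$. When $\Gamma=C_p\rtimes C_q$ and $G=C_n$, Corollary~\ref{G-cyc} gives $e=\gamma=p$. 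The interesting case is $\Gamma=G=C_p\rtimes C_q$: here $\gamma=p$ and $e=p$, so $\gamma\mid e$ and Hypothesis~\ref{HGS-exist} holds; the unique prime dividing $\gcd(\gamma,g)=p$ has $\rho_p=r_p=q$, so it lies in $S$ if $q>2$ and in $T$ if $q=2$. I would treat these two subcases. If $q>2$, then $S=\{p\}$, $T=\emptyset$, $w=q-1$, and by Proposition~\ref{num-h} (with $\gcd(\delta,d)=q=r_p$) the prime $p$ lies in $S_h^+$ for exactly one $h$ and in $S_h^-$ for exactly one $h$ — these being distinct since $k\not\equiv k^{-1}\pmod p$ as $r_p=q>2$ — so among the $w$ terms of the sum, two equal $\frac{p+1}{2p}$ and the remaining $w-2$ equal $1$; thus
$$ e = \frac{2^{1}\varphi(q)\,p}{q-1}\left(2\cdot\frac{p+1}{2p}+(q-3)\right) = 2p\left(\frac{p+1}{p}+q-3\right) = 2(p+1)+2p(q-3). $$
If $q=2$, then $T=\{p\}$, $S=\emptyset$, $w=1$, and Theorem~\ref{thm-HGS} gives $e=2^1\varphi(2)\,p\cdot\frac1p\cdot1=2$.

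Assembling these values into Table~\ref{pq-table} completes the proof. I do not expect any genuine obstacle: every case reduces to substitution into Theorem~\ref{thm-HGS} or its corollaries once the invariants $d,g,z,\gamma,w$ and the sets $R,S,T,S_h$ are identified. The only point requiring a little care is the count of the $S_h^\pm$ in the both-nonabelian case, where one must invoke Proposition~\ref{num-h} to see that $p$ belongs to $S_h^+$ for a unique $h$ (because $\gcd(\delta,d)=q=r_p$) and use $r_p>2$ to ensure $S_h^+$ and $S_h^-$ are realised by different indices; the bookkeeping of how many of the $w$ summands equal $1$ versus $\frac{p+1}{2p}$ then follows. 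One should also double-check the degenerate small cases — in particular $q=2$, where $\Gamma$ or $G$ dihedral falls under Corollaries~\ref{Gam-dih} and~\ref{G-dih} and gives the consistent answer — and note that when exactly one group is cyclic the formula is symmetric-looking only up to the asymmetry $2(q-1)$ versus $p$, reflecting that $e(\Gamma,G)$ is not symmetric in its arguments.
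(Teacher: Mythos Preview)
Your proposal is correct and follows essentially the same route as the paper: you dispose of the three cases where at least one group is cyclic via Corollaries~\ref{Gam-cyc} and~\ref{G-cyc}, and in the both-nonabelian case you identify $S=\{p\}$ (or $T=\{p\}$ when $q=2$), invoke Proposition~\ref{num-h} to see that $p$ lies in $S_h^+$ and $S_h^-$ for one value of $h$ each, and substitute into Theorem~\ref{thm-HGS} to obtain $2(p+1)+2p(q-3)=2p(q-2)+2$. The only cosmetic difference is that the paper cites the dihedral corollaries for the subcase $q=2$, whereas you compute it directly from Theorem~\ref{thm-HGS}; both give $e=2$.
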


\begin{table}[ht] 
\centerline{ 
\begin{tabular}{|c|c|c|} \hline
         & $G=C_n$ & $G=C_p \rtimes C_q$ \\ \hline
 $\Gamma=C_n$ & $1$   &  $2(q-1)$    \\ \hline
 $\Gamma=C_p \rtimes C_q$ &  $p$  & $2p(q-2)+2$  \\ \hline
\end{tabular}
}  
\vskip3mm

\caption{Hopf-Galois structures for two primes.} 
 \label{pq-table}  	
\end{table}

\begin{proof}
If either $G$ or $\Gamma$ is cyclic, the result follows from Corollary
\ref {Gam-cyc} or Corollary \ref{G-cyc}. If $G=\Gamma=C_p \rtimes C_q$
with $q=2$ (so $G$ and $\Gamma$ are both dihedral), the result follows from
Corollary \ref{Gam-dih}, or, alternatively, from Corollary
\ref{G-dih}. It remains to consider the case $G=\Gamma=C_p \rtimes
C_q$ with $q>2$. Then $d=\delta=q$ so $w=\varphi(q)=q-1$. Thus $R$
and $T$ are empty and $S=\{p\}$ with $r_p=\rho_p=q-1$. There is one value of
$h$ with $p \in S_h^+$ and one with $p \in S_h^-$. We therefore have
$$ e(\Gamma,G) = 2p \left( 2 \times \frac{p+1}{2p} + (q-3) \right)
   =  2p(q-2)+2. $$
\end{proof}

\subsection{When $n$ is the product of three primes}

In this final subsection, we consider the case where $n$ is the
product of three distinct primes $p_1<p_2<p_3$.  
This extends work of
Kohl \cite{kohl13, kohl16}.  In \cite{kohl13}, Kohl takes $p_1=2$ and
$p_3=2p_2+1$, so that $q=p_2>2$ is a Sophie Germain prime and $p=p_3$
is a safeprime. He calculates $e(\Gamma,G)$ for all possible pairs
$\Gamma$, $G$. The results for $\Gamma=C_p \rtimes C_{p-1} =
\Hol(C_p)$ were already obtained in \cite{Ch03}.  In
\cite{kohl16}, Kohl handles the case $p_1>2$,
with $p_3 \equiv p_2 \equiv 1 \pmod{p_1}$ but $p_3 \not \equiv 1
\pmod{p_2}$. 

We first describe the groups of order $n= p_1 p_2 p_3$.  Subject to
certain congruence conditions, there are $6$ possible factorisations
$n=dgz$ which give rise to groups of order $n$. For ease of reference,
we label these factorisations $1$--$6$ as shown in Table
\ref{3-prime-table}. The last column shows the number of isomorphism
types of group $G$ for each factorisation. 

\begin{table} 
\centerline{ 
\begin{tabular}{|c|ccc|c|c|} \hline
 Factorisation & $d$ & $g$ & $z$ & Condition & \# groups  \\ \hline
 $1$ &  $1$ & $1$ & $p_1 p_2 p_3$ & & $1$  \\
 $2$ &  $p_1$ & $p_2$ & $p_3$ & $p_2 \equiv 1 \pmod{p_1}$ & $1$  \\
 $3$ & $p_1$ & $p_3$ & $p_2$ & $p_3 \equiv 1 \pmod{p_1}$ & $1$  \\
 $4$ & $p_1$ & $p_2 p_3$ & $1$ & $p_2 \equiv p_3 \equiv 1 \pmod{p_1}$ &
 $p_1-1$  \\ 
 $5$ & $p_2$ & $p_3$ & $p_1$ & $p_3 \equiv 1 \pmod{p_2}$ & $1$  \\ 
 $6$ & $p_1 p_2$ & $p_3$ & $1$ & $p_3  \equiv 1 \pmod{p_1 p_2}$ &
 $1$  \\  \hline
\end{tabular}
}  
\vskip5mm

\caption{Isomorphism types for groups of order 
  $n=p_1 p_2 p_3$.}  \label{3-prime-table} 
\end{table}   

In all cases except Factorisation $4$, the group $\Z_g^\times$ is
cyclic and therefore has a unique subgroup of order $d$, so that
there is a unique isomorphism type group for the given
factorisation. For Factorisation $4$, however, the Sylow
$p_1$-subgroup of $\Z_g^\times = \Z_{p_2}^\times \times
\Z_{p_3}^\times$ is of order $p_1^2$ and contains $p_1+1$ subgroups of
order $d=p_1$, of which $p_1-1$ project nontrivally to both
factors. In this case, we obtain $p_1-1$ distinct
isomorphism classes. Thus there are in total $p_1+4$
isomorphism classes of groups of order $n$, provided that all the
indicated congruence conditions hold.

We examine more carefully the $p_1-1$ groups from Factorisation $4$ when
$p_1>2$. We may assume that the corresponding values of $k$ are all
congruent mod $p_2$, but run through all $p_1-1$ residue classes of order $p_1$
mod $p_3$. Given one such group $G=G(p_1,p_2 p_3,k)$, we write
$\widehat{G}=G(p_1,p_2 p_3, \widehat{k})$ where  
$\widehat{k} \equiv k \pmod{p_2}$ and $\widehat{k} \equiv k^{-1}
\pmod{p_3}$. Thus the groups from Factorisation $4$ come in pairs $G$,
$\widehat{G}$. 

We now determine $e(\Gamma,G)$ as the factorisations of $n$
corresponding to $G$ and $\Gamma$ each run through
the $6$ possibilities in Table \ref{3-prime-table}. 
For brevity, we state
our result only in the situation where all the congruence conditions
$p_i \equiv 1 \pmod{p_j}$ for $i>j$ are satisfied. The results in the
other cases are easily obtained from this, merely by omitting those
$G$ and $\Gamma$ for which the relevant congruence conditions (as
shown in Table \ref{3-prime-table}) do not hold.

\begin{theorem} \label{3-prime-thm}
Let $n=p_1 p_2 p_3$ where $p_1 < p_2 < p_3$ are primes and $p_i \equiv
1 \pmod{p_j}$ for $i>j$. Let $G$, $\Gamma$ be groups of order
$n$. Then $e(\Gamma,G)$, the number of Hopf-Galois structures of type
$G$ on a Galois extension $L/K$ with $\Gal(L/K)\cong \Gamma$, is as shown in
Table \ref{HGS-2} if $p_1=2$, respectively Table \ref{HGS-not2} if
$p_1>2$.  The rows (respectively, columns) of these tables correspond
to the factorisation of $n$ giving $\Gamma$ (respectively,
$G)$, as in Table \ref{3-prime-table}. In the case that $p_1>2$
and $G$, $\Gamma$ both come from Factorisation $4$, we have 
\begin{equation} \label{lastcase}
 e(\Gamma,G) = \begin{cases}
  4p_1 p_2 p_3 -10p_2 p_3 +2p_2 + 2p_3 + 2 & \mbox{ if } G \cong \Gamma
  \mbox{ or } \widehat{\Gamma}, \\
   4p_1 p_2 p_3-12 p_2 p_3 +4p_2 + 4p_3
  & \mbox{ if } G \not \cong \Gamma  \mbox{ or } \widehat{\Gamma}. 
 \end{cases}
\end{equation}
\end{theorem}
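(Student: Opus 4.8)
The plan is to apply Theorem~\ref{thm-HGS} systematically, treating the entries of Tables~\ref{HGS-2} and~\ref{HGS-not2} as a finite case analysis indexed by the factorisation type (1--6) of $G$ and of $\Gamma$. For each pair of factorisation types, one reads off $d$, $g$, $z$ from Table~\ref{3-prime-table} for $G$ and $\delta$, $\gamma$, $\zeta$ for $\Gamma$, checks whether $\gamma \mid e$ (if not, the entry is $0$), computes $w = \varphi(\gcd(\delta,d))$, and then identifies which primes $q \mid \gcd(\gamma,g)$ land in $R$, $S$, $T$ according to whether $\rho_q = r_q$ and whether this common value is $>2$ or $=2$. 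Since each of $g$, $\gamma$ involves at most the primes $p_1, p_2, p_3$, and since $r_q$ and $\rho_q$ are forced by the factorisation type (e.g.\ $r_{p_3} = p_1$ in Factorisation~3, $r_{p_3} = p_2$ in Factorisation~5, $r_{p_3} = p_1 p_2$ in Factorisation~6, and so on), most of these determinations are immediate. The remaining ingredient is the sum $\sum_{h=1}^w \prod_{q \in S_h} \frac{q+1}{2q}$: one uses Proposition~\ref{num-h} to count, for each $q \in S$, the number of $h$ with $q \in S_h^+$ (namely $w/\varphi(r_q)$), hence the number with $q \in S_h$ (using $S_h^- = S_{w+1-h}^+$), and then expands the product-sum combinatorially.

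First I would handle all the rows and columns involving Factorisations $1$ (cyclic) and $2$, $3$, $5$, $6$ (those with a unique isomorphism type). For a cyclic $\Gamma$ or $G$ the answer comes straight from Corollaries~\ref{Gam-cyc} and~\ref{G-cyc}. For the non-cyclic cases among $2$, $3$, $5$, $6$, note that $g$ is a single prime $p_i$ and $d$ is either a single prime or $p_1 p_2$; the set $S$ is then either empty or a singleton $\{p_i\}$ with $r_{p_i}$ an explicit value, so the $h$-sum has at most two ``interesting'' terms (from $S_h^+$ and $S_h^-$) and $w - 2\cdot(w/\varphi(r_{p_i})) + \cdots$ trivial terms contributing $1$ each. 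These give closed forms like the $2p(q-2)+2$ already seen in the two-prime case, now with $p = p_3$ and $q$ the relevant exponent. I would tabulate these, being careful about the asymmetry between $G$ and $\Gamma$: $\varphi(d)$ and $\gamma$ enter the formula multiplicatively and are generally different.

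Next I would treat Factorisation~$4$, which is the only source of genuine complexity because there are $p_1 - 1$ isomorphism types, paired as $G$, $\widehat{G}$. Here $g = p_2 p_3$, $d = p_1$, $z = 1$, and when $\Gamma$ is also of type $4$ we have $\gamma = p_2 p_3$, $\delta = p_1$, $\zeta = 1$, so $\gcd(\delta,d) = p_1$ and $w = p_1 - 1 = \varphi(p_1)$, meaning $\Delta$ is trivial and the $\kappa_h$ are all $p_1 - 1$ elements of $\K$. For $q = p_2$: $r_{p_2} = \rho_{p_2} = p_1$ (by the standing congruences and the structure of the groups), so $p_2 \in S$ with $r_{p_2} = p_1 > 2$ (when $p_1 > 2$), and by Proposition~\ref{num-h}, $p_2 \in S_h^+$ for exactly one $h$; likewise $p_2 \in S_h^-$ for exactly one $h$, and these coincide iff $\kappa \equiv \kappa^{-1} \pmod{p_2}$, i.e.\ never since $r_{p_2} = p_1 > 2$. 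For $q = p_3$: both $r_{p_3}$ and $\rho_{p_3}$ equal $p_1$, but now whether $p_3 \in S_h$ depends on whether $\kappa_h \equiv k^{\pm 1} \pmod{p_3}$, which distinguishes $\Gamma \cong G$, $\Gamma \cong \widehat{G}$, and $\Gamma \not\cong G, \widehat{G}$ — exactly the trichotomy in~\eqref{lastcase}. So I would carefully compute, in each of these three sub-cases, the multiset $\{S_h : 1 \le h \le w\}$ and evaluate $\sum_h \prod_{q \in S_h}\frac{q+1}{2q}$; when $G \cong \Gamma$ the relevant $h$ has $S_h \supseteq \{p_2, p_3\}$ for one $h$, etc. Plugging into $e(\Gamma,G) = \frac{2^{\omega(g)}\varphi(d)\gamma}{w}\bigl(\prod_{q\in T}\tfrac1q\bigr)\sum_h \prod_{q\in S_h}\tfrac{q+1}{2q}$ with $\omega(g) = 2$, $\varphi(d) = p_1 - 1$, $\gamma = p_2 p_3$, $w = p_1 - 1$, and $T = \emptyset$ (since $p_1 > 2$ forces no prime with $r_q = \rho_q = 2$ dividing $\gcd(\gamma,g)$ here), the $(p_1-1)$ factors cancel and one is left with $p_2 p_3$ times a sum of $(p_1 - 1)$ terms, each a product of factors $\frac{q+1}{2q}$, clearing denominators to land on the stated polynomials in $p_1, p_2, p_3$.

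The main obstacle I expect is bookkeeping in the Factorisation-$4$ versus Factorisation-$4$ case: correctly counting, for the three isomorphism relationships between $G$ and $\Gamma$, how many indices $h$ give $S_h = \{p_2,p_3\}$, how many give $\{p_2\}$ or $\{p_3\}$, and how many give $\emptyset$, and verifying that $p_2$ always contributes (independently of $h$, since $r_{p_2} = p_1$ forces $p_2 \in S_h$ for a predictable pattern of $h$) while $p_3$'s membership in $S_h$ is controlled by the congruence class distinguishing $k$, $\widehat k$. A secondary subtlety is making sure that when $\Gamma$ comes from Factorisation~$4$ but $G$ does not (or vice versa), the quantity $\gcd(\gamma, g)$ and the sets $R$, $S$, $T$ are computed with the right $r_q$, $\rho_q$ — in particular $p_3 \in R$ rather than $S$ whenever $r_{p_3} \ne \rho_{p_3}$, which happens e.g.\ when one group is type $4$ (so $r_{p_3} = p_1$) and the other is type $5$ or $6$ (so $\rho_{p_3} \in \{p_2, p_1 p_2\}$). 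Once these membership questions are settled the arithmetic is routine, and I would present the outcome as the two tables plus the displayed formula~\eqref{lastcase}, with the Factorisation-$4$/$4$ derivation written out in full and the other entries left as ``a direct application of Theorem~\ref{thm-HGS}'' with perhaps one representative computation shown.
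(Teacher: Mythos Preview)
Your approach is correct and matches the paper's: both proofs are a case-by-case application of Theorem~\ref{thm-HGS}, streamlined by the observation that $|S|\le 1$ in every entry except the Factorisation-$4$/$4$ case with $p_1>2$, which is then handled separately by working out the multiset $\{S_h:1\le h\le w\}$ and summing. One small correction to your obstacles paragraph: it is not that ``$p_2$ always contributes (independently of $h$)'' --- $p_2\in S_h$ for exactly two of the $p_1-1$ indices $h$, just as $p_3$ does --- but rather that, under the normalisation making all type-$4$ groups share the same residue of $k$ modulo $p_2$, the two indices for $p_2$ are fixed independently of $G$, whereas the two indices for $p_3$ move with $G$; the dichotomy in~\eqref{lastcase} is precisely whether these two pairs of indices coincide (giving $S_h=\{p_2,p_3\}$ twice and $\emptyset$ otherwise, when $G\cong\Gamma$ or $\widehat\Gamma$) or are disjoint (giving $\{p_2\}$ twice, $\{p_3\}$ twice, and $\emptyset$ otherwise).
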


\begin{table} 
\centerline{ 
\begin{tabular}{|c|c|c|c|c|c|c|} \hline
$\downarrow \Gamma \quad G \rightarrow$  &$1$ & $2$ & $3$
  & $4$ & $5$ & $6$ \\ \hline 
$1$
   & $1$ & $2$ & $2$ & $4$ & $2(p_2-1)$ & $2(p_2-1)$ \\ \hline
$2$
   & $p_2$ & $2$ & $2p_2$ & $4$ & $0$ & $0$ \\ \hline
$3$ 
   & $p_3$ & $2p_3$ & $2$ & $4$ & $2(p_2-1)p_3$ & $2(p_2-1)p_3$  \\ \hline
$4$ 
   & $p_2 p_3$ & $2p_3$ & $2p_2$ & $4$ & $0$ & $0$ \\ \hline 
$5$
   & $p_3$ & $2p_3$ & $2p_3$ & $4p_3$ & $2+2(p_2-2)p_3$ & $2(p_2-1)p_3$ \\
  \hline 
$6$
   & $p_3$ & $2p_3$ & $2 p_3$ & $4p_3$ & $2(p_2-1)p_3$ &
  $2+2(p_2-2)p_3$  \\  \hline
\end{tabular}
}  
\vskip5mm

\caption{Numbers of Hopf-Galois structures for
  $n=p_1 p_2 p_3$ with $p_1=2$.}  \label{HGS-2} 
\end{table}

\begin{sidewaystable} 

\vskip8cm  

\centerline{ 
\begin{tabular}{|c|c|c|c|c|c|c|} \hline
$\downarrow \Gamma \quad G \rightarrow$   &$1$ & $2$ & $3$
  & $4$ & $5$ & $6$ \\ \hline 
$1$
   & $1$ & $2(p_1-1)$ & $2(p_1-1)$ & $4(p_1-1)$ & $2(p_2-1)$ &
  $2(p_1-1)(p_2-1) $ \\ \hline
$2$
   & $p_2$ & $2+2(p_1-2)p_2$ & $2(p_1-1)p_2$ & $4+4(p_1-2)p_2$ &
   $0$ & $0$ \\ \hline
$3$ 
   & $p_3$ & $2(p_1-1)p_3$ & $2+2(p_1-2)p_3$ &
  $4+4(p_1-2)p_3$ & $2(p_2-1)p_3$ & $2(p_1-1)(p_2-1)p_3$  \\ \hline
$4$ 
   & $p_2 p_3$ & $2p_3+2(p_1-2)p_2p_3$ & $2p_2+2(p_1-2)p_2p_3$ & 
    See (\ref{lastcase}) & $0$ & $0$ \\ \hline 
$5$
   & $p_3$ & $2(p_1-1)p_3$ & $2(p_1-1)p_3$ & $4(p_1-1)p_3$ &
  $2+2(p_2-1)p_3$ & $2(p_1-1)(p_2-1)p_3$ \\ 
  \hline 
$6$
   & $p_3$ & $2(p_1-1)p_3$ & $2(p_1-1)p_3$ & $4(p_1-1)p_3$ &
  $2(p_2-1)p_3$ & $2+2(p_1p_2-p_1-p_2)p_3$  \\  \hline
\end{tabular}
}  
\vskip5mm

\caption{Numbers of Hopf-Galois structures for
  $n=p_1 p_2 p_3$ with $p_1>2$.}  \label{HGS-not2}

\bigskip \bigskip
 
\centerline{ 
\begin{tabular}{|c|c|c|c|c|c|c|} \hline
$\downarrow \Gamma \quad G \rightarrow$  &$1$ & $2$ & $3$
  & $4$ & $5$ & $6$ \\ \hline 
$1$
   & $w=1$ & $w=1$ & $w=1$ & $w=1$ & $w=1$ & $w=1$ \\ \hline
$2$
   & $w=1$ & $w=p_1-1$ & $w=p_1-1$ & $w=p_1-1$ &  &  \\
 &  & $T=\{p_2\}$ & &  $T=\{p_2\}$ & &  \\  \hline
$3$ 
   & $w=1$ & $w=p_1-1$ & $w=p_1-1$ & $w=p_1-1$ & $w=1$ & $w=p_1-1$ \\
 & & & $T=\{p_3\}$ & $T=\{p_3\}$ & $R=\{p_3\}$ &  $R=\{p_3\}$ \\ \hline
$4$ 
   & $w=1$ & $w=p_1-1$ & $w=p_1-1$ & $w=p_1-1$ &  &  \\
 & & $T=\{p_2\}$ & $T=\{p_3\}$ & $T=\{p_2,p_3\}$ & &  \\ \hline 
$5$
   & $w=1$ & $w=1$ & $w=1$ & $w=1$ & $w=p_2-1$ & $w=p_2-1$ \\
 & &  &  $R=\{p_3\} $ &  & $S=\{p_3\}$ &  $R=\{p_3\}$ \\ \hline 
$6$
   & $w=1$ & $w=p_1-1$ & $w=p_1-1$ & $w=p_1-1$ & $w=p_2-1$ &
  $w=(p_1-1)(p_2-1)$  \\ 
 & &  & $R=\{p_3\}$ & $R=\{p_3\}$ & $R=\{p_3\}$ & $S=\{p_3\}$ \\ \hline
\end{tabular}
}  
\vskip5mm

\caption{$w$, $R$, $S$ and $T$ for $n=p_1 p_2 p_3$ with $p_1=2$.}  \label{wRST} 
\end{sidewaystable}   

\begin{proof} 
The entries shown as $0$ in either table correspond to cases where
$\gamma \nmid e$, so by Proposition \ref{d-div-gam-del} there are no
regular subgroups in $\Hol(G)$ isomorphic to $\Gamma$ and
$e(\Gamma,G)=0$. 

We show in Table \ref{wRST} the number $w$ and the sets $R$, $S$, $T$
for each combination of $G$ and $\Gamma$. For brevity, we only include
the sets $R$, $S$, $T$ when they are nonempty, and we give these sets
in the case $p_1=2$.  If $p_1 \neq 2$, we must replace $S$ by $S \cup
T$ and $T$ by the empty set (cf.~Remark \ref{Sh-wd}).  The four empty
cells correspond to cases where Hypothesis \ref{HGS-exist} is not
satisfied.

Except in the case $p_1>2$ and $g=\gamma=p_2 p_3$ (that is, $G$ and
$\Gamma$ both come from Factorisation $4$), the set $S$ is either empty
or consists of a single prime $p$. We have
\begin{equation} \label{S-empty}
    e(\Gamma,G) = 2^{\omega(g)} \varphi(d) \gamma \left(
\prod_{q \in T} \frac{1}{q} \right) \mbox{ if } S=\emptyset. 
\end{equation}
If $S=\{p\}$ then  $p=p_2$ or $p_3$, and $r_p=d>2$. Then
$p \in S_h^+$ (respectively, $S_h^-$) for $w/\varphi(d)$
values of $h$, so the sum in the expression
for the number $e(\Gamma,G)$ of 
Hopf-Galois structures simplifies to
\begin{eqnarray*}
  \sum_{h=1}^w \prod_{q \in S_h} \frac{q+1}{2q} & = & 
      \frac{2w}{\varphi(d)} \cdot \frac{p+1}{2p} + \left(w -
      \frac{2w}{\varphi(d)} \right) \\
  & = & \frac{w}{\varphi(d)p} \left( 1+[\varphi(d)-1]p\right).
\end{eqnarray*}
Then we have 
\begin{equation} \label{S-p} 
  e(\Gamma,G) = \frac{2^{\omega(g)}\gamma}{p} \left(
\prod_{q \in T} \frac{1}{q} \right) 
   \left(1+[\varphi(d)-1] p\right) \mbox{ if } S=\{p\}.
\end{equation}
From Table \ref{wRST}, (\ref{S-empty}) and (\ref{S-p}), we find that
that when $p_1=2$ the values of $e(\Gamma,G)$ are as in Table
\ref{HGS-2}, and when $p_1>2$ (but $G$ and $\Gamma$ do not both come
from Factorisation $4$), they are as in Table \ref{HGS-not2}.

We now examine the omitted case where $p_1>2$ and $G$, $\Gamma$ both
come from Factorisation $4$. We have $d=\delta=p_1$ and $w=p_1-1 \geq
2$. If $\kappa \equiv k$ or $\widehat{k}$ then $S_h=\{p_2,p_3\}$ for
two of the $p_1-1$ possible 
values of $h$, and $S_h=\emptyset$ for the rest. This occurs for $G
\cong \Gamma$ or $\widehat{\Gamma}$. Otherwise, there are two values
of $h$ with $S_h=\{p_2\}$ and a further two with $S_h=\{p_3\}$.  Thus,
for $G \cong \Gamma$ or $\widehat{\Gamma}$, we have
$$ e(\Gamma,G)  =  \frac{2^2 (p_1-1) p_2 p_3}{p_1-1} \left( 2 \cdot
 \frac{p_2+1}{2p_2} \cdot\frac{p_3+1}{2p_3} + (p_1-3) \right) $$
while for $G \not \cong \Gamma$ or $\widehat{\Gamma}$, we get 
$$  e(\Gamma,G)  =  \frac{2^2 (p_1-1) p_2 p_3}{p_1-1} \left( 2 \cdot
 \frac{p_2+1}{2p_2} +2 \cdot\frac{p_3+1}{2p_3} + (p_1-5) \right). $$
Hence we have (\ref{lastcase}).
\end{proof}

\begin{remark}
Our results agree with those of \cite{kohl13,
  kohl16}. When $p_1=2$ and $p_3=2p_2+1$, the groups corresponding to
the factorisations in Table \ref{3-prime-table} are denoted in
\cite{kohl13} as $C_{mp}$, $C_p \times D_q$, $C_q \times D_p$,
$D_{pq}$, $F \times C_2$, $\Hol\ C_p$, respectively. Our results in
Table \ref{HGS-2} in this case then agree with \cite[Theorem
  5.1]{kohl13}. When $p_1>2$, with $p_3 \equiv p_2 \equiv 1
\pmod{p_1}$ but $p_3 \not \equiv 1 \pmod{p_2}$, only 
Factorisations $1$--$4$ in our Table \ref{3-prime-table} occur. In
\cite{kohl16}, the corresponding groups are denoted respectively by
$C_{p_3 p_2 p_1}$, $C_{p_3} \times (C_{p_2} \rtimes C_{p_1})$,
$C_{p_2} \times (C_{p_3} \rtimes C_{p_1})$, $C_{p_3 p_2} \rtimes
C_{p_1}$, where the last case gives $p_1-1$ isomorphism classes of
groups, coming from different actions of $C_{p_1}$ on $C_{p_3
  p_2}$. Our Table \ref{HGS-not2} (with the last two rows and columns
omitted) matches the first table in \cite[Theorem 2.4]{kohl16}, and
the two cases in (\ref{lastcase}) respectively match the cases $j=i$,
$-i$ and $j \neq i $, $-i$ in the second table there.
\end{remark}

\bibliography{GSQ-bib}
\end{document}